\documentclass[reqno,11pt]{amsart}
\usepackage{amsfonts}
\topmargin -0.40in%
\oddsidemargin -0.05in%
\evensidemargin -0.05in%
\textwidth 16cm%
\textheight 23cm%


\usepackage{amsmath,amssymb,amsthm,amsfonts}
\usepackage{mathrsfs}
\usepackage{bbm}
\usepackage{hyperref}

\usepackage{color}

\newtheorem{lemma}{Lemma}[section]
\newtheorem{theorem}{Theorem}[section]

\newtheorem{proposition}{Proposition}[section]

\numberwithin{equation}{section}

\arraycolsep=1.5pt

\newcommand{\R}{\mathbb{R}}

\newcommand{\Z}{\mathbb{Z}}

\renewcommand{\S}{\mathbb{S}}
\newcommand{\T}{\mathbb{T}}


\newcommand{\FM}{\mathbf{M}}
\newcommand{\FN}{\mathbf{N}}
\newcommand{\FP}{\mathbf{P}}

\newcommand{\FI}{\mathbf{I}}
\newcommand{\FX}{\mathbf{X}}

\newcommand{\FR}{\mathbf{R}}

\newcommand{\CA}{\mathcal{A}}

\newcommand{\CE}{\mathcal{E}}

\newcommand{\na}{\nabla}

\newcommand{\al}{\alpha}
\newcommand{\be}{\beta}
\newcommand{\ga}{\gamma}
\newcommand{\om}{\omega}

\newcommand{\la}{\lambda}
\newcommand{\de}{\delta}

\newcommand{\pa}{\partial}
\newcommand{\ka}{\kappa}
\newcommand{\eps}{\epsilon}

\newcommand{\ta}{\theta}

\newcommand{\vps}{\varepsilon}

\newcommand{\Ga}{\Gamma}

\newcommand{\eqdef}{\overset{\mbox{\tiny{def}}}{=}}


\begin{document}
\title[Hydrodynamic approximation with viscous heating to the Boltzmann equation]
{Incompressible hydrodynamic approximation with viscous heating to the Boltzmann equation}

\author[Y. Guo]{Yan Guo}
\address[YG]{Division of Applied Mathematics, Brown University, Providence 02912, USA}
\email{${\rm Yan}_-{\rm Guo}$@brown.edu}

\author[S.-Q. Liu]{Shuangqian Liu}
\address[SQL]{Department of Mathematics, Jinan University, Guangzhou 510632, P.R.~China}
\email{tsqliu@jnu.edu.cn}


\begin{abstract}
The incompressible Navier-Stokes-Fourier system with viscous heating was first derived from the Boltzmann equation in the form of the diffusive scaling by
Bardos-Levermore-Ukai-Yang (2008). The purpose of this paper is to justify such an incompressible hydrodynamic approximation to the Boltzmann equation in $L^2\cap L^\infty$ setting in a periodic box. Based on an odd-even expansion of the solution with respect to the microscopic
velocity, the diffusive coefficients are determined by the incompressible Navier-Stokes-Fourier system with viscous heating
and the super Burnett functions. More importantly, the remainder of the expansion is proven to decay exponentially in time via an $L^2-L^\infty$ approach on the condition that the initial data satisfies the mass, momentum and energy conversation laws.
\end{abstract}

\keywords{Incompressible Navier-Stokes-Fourier system, viscous heating, super Burnett functions, $L^2-L^\infty$ approach.}

\subjclass[2010]{35Q20, 35Q79, 35C20.}

\maketitle

\thispagestyle{empty}
\tableofcontents

\section{Introduction}
\subsection{The problem}
This paper is concerned with the connection between the incompressible fluid dynamical equations
with viscous heating and the Boltzmann equation in a periodic box.
In the diffusive regime, the time evolution of the dilute gas is governed by the following \textit{%
rescaled} Boltzmann equation:
\begin{equation}
\eps\pa_t F+v\cdot\nabla_x F=\frac{1}{\eps}Q(F,F),\ x\in\T^3,\ v\in\R^3,
\label{BE}
\end{equation}
with initial data
\begin{equation}\label{ID}
F(0,x,v) = F_0(x,v),\ x\in\T^3,\ v\in\R^3.
\end{equation}
Here, $F(t,x,v)\geq0$ is the distribution function of particles at time $t\in \mathbb{R}_{+}$, position $x\in[-\pi,\pi]^3=\T^3$ and velocity $v\in \mathbb{
R}^{3}$, and $\eps>0$ is the Knudsen number which is proportional to the
mean free path.

$Q(\cdot,\cdot)$ in \eqref{BE} is the Boltzmann collision operator, which for the hard sphere model takes the following non-symmetric form:
$$
Q(F,H)=\int_{\R^3\times \S_+^2}
\Big(F(v
_\ast')H(v')-F(v_\ast)H(v)\Big)|(v-v_\ast)\cdot\om|\,dv_\ast d\om,
$$
where $\S_+^2=\{\om\in\S^2: (v-v_\ast)\cdot\om\geq0\}$
and $(v,v_{\ast})$, and $(v',v_{\ast}')$, denote velocities of two particles before and after an elastic collision respectively, satisfying
\begin{equation*}
v'=v-[(v-v_\ast)\cdot\om]\om,\quad
v_\ast'=v_\ast+[(v-v_\ast)\cdot\om]\om.
\end{equation*}
Recently, there have been great interest \cite{Bardos-Golse-Levermore-1993,EGKM-15,Golse-2005,Gosle-Saint-Raymond-2004,Guo-2006,Levermore-Masmoudi-2010,Masmoudi-Saint-Raymond-2003} in studying the rescaled Boltzmann
equation \eqref{BE}, which naturally leads to the incompressible Navier-Stokes-Fourier (denoted by INSF in the sequel)
equations in the dimensionless form according to the Hilbert expansion. Among others, Bardos-Levermore-Uaki-Yang \cite{Bardos-Levermore-Ukai-Yang-2008} developed a so-called odd-even decomposition to derive a new incompressible hydrodynamic system which differs from the classical INSF
equations in that they include the viscous heating term 
and driving terms involving the limiting pressure fluctuation. The aim of the present paper is to employ the $L^{2}-L^{\infty }$ framework
developed in \cite{Guo-2010} to justify the validity of such an INSF
equations approximation with viscous heating
to the Boltzmann equation in a periodic box.

\subsection{Odd-even expansion with remainder}
Let $\mu$ be the global Maxwellian defined as
\begin{equation*}  \label{g_Maxwellian}
{\mu(v)=M_{[1,0,1]} = \frac{1}{(2\pi)^{3/2}} e^{- \frac{|v|^{2}}{2}} . }
\end{equation*}
The odd-even expansion \cite{Bardos-Levermore-Ukai-Yang-2008} suggests that the solution of \eqref{BE} can be written as
\begin{equation}\label{oe.ep}
F=\mu+\eps\sqrt{\mu}\left\{f_1+\eps f_2+\eps^2f_3+\eps^3f_4+\eps^4 f_5+\eps^5 f_6+\eps^{4-\beta}R\right\},\ 0<\beta<1/2,
\end{equation}
where
\begin{equation}\label{fioe}
f_1, f_3\  \text{and}\ f_5\ \text{are odd in}\ v,\ \text{while}\
f_2, f_4\ \text{and}\ f_6\ \text{are even in}\ v.
\end{equation}
Plugging \eqref{oe.ep} into \eqref{BE} and comparing the coefficients on both side of the resulting equation, we obtain for $\eps^0$, $\eps^1$, $\eps^2$, $\eps^3$, $\eps^4$ and $\eps^5$
\begin{equation}\label{f1e}
Lf_1=0,
\end{equation}
\begin{equation}\label{f2e}
v\cdot \na_xf_1+Lf_2=\Gamma(f_1,f_1),
\end{equation}
\begin{equation}\label{f3e}
\pa_tf_1+v\cdot\na_xf_2+Lf_3=\Gamma(f_1,f_2)+\Gamma(f_2,f_1),
\end{equation}
\begin{equation}\label{f4e}
\pa_tf_2+v\cdot\na_xf_3+Lf_4=\Gamma(f_2,f_2)+\Gamma(f_1,f_3)+\Gamma(f_3,f_1),
\end{equation}
\begin{equation}\label{f5e}
\pa_tf_3+v\cdot\na_xf_4+Lf_5=\Gamma(f_2,f_3)+\Gamma(f_3,f_2)+\Gamma(f_4,f_1)+\Gamma(f_1,f_4),
\end{equation}
\begin{equation}\label{f6e}
\pa_tf_4+v\cdot\na_xf_5+Lf_6=\Gamma(f_3,f_3)+\Gamma(f_2,f_4)+\Gamma(f_4,f_2)+\Gamma(f_1,f_5)+\Gamma(f_5,f_1),
\end{equation}
and the equation for the remainder $R$
\begin{equation}\label{R}
\begin{split}
\eps\pa_tR&+v\cdot\na_xR+\frac{1}{\eps}LR\\
=&\left\{\Gamma(f_1,R)+\Gamma(R,f_1)\right\}
+\eps\left\{\Gamma(f_2,R)+\Gamma(R,f_2)\right\}\\
&+\eps^2\left\{\Gamma(f_3,R)+\Gamma(R,f_3)\right\}+\eps^3\left\{\Gamma(f_4,R)+\Gamma(R,f_4)\right\}
\\
&+\eps^4\left\{\Gamma(f_5,R)+\Gamma(R,f_5)\right\}
+\eps^5\left\{\Gamma(f_6,R)+\Gamma(R,f_6)\right\}
\\&+\eps^{4-\be}\Gamma(R,R)
-\eps^{1+\beta}\left\{\pa_tf_5+v\cdot\na_xf_6\right\}-\eps^{2+\beta} \pa_t f_6,
\end{split}
\end{equation}
with
\begin{equation}\label{RID}
R(0,x,v)=R_0(x,v).
\end{equation}
Here, the linear collision operator $L$ and nonlinear collision operator $\Gamma$ are defined as
\begin{equation*}
Lg=-\frac{1}{\sqrt{\mu}}\big\{Q(\mu,\sqrt{\mu}g)+Q(\sqrt{\mu}g,\mu)\big\},
\end{equation*}
and
\begin{equation*}
\Gamma(g,h)=\frac{1}{\sqrt{\mu}}Q(\sqrt{\mu}g,\sqrt{\mu}h),
\end{equation*}
respectively.  The null space of $L$ denoted by
$\mathscr{N}(L)$ is generated by
$\left[\sqrt{\mu},v\sqrt{\mu},v^{2}\sqrt{\mu}\right]$, thus for any
function $g(t,x,v)$, we can decompose it as follows
$$
g={\bf P}g+\{{\bf I- P}\}g,
$$
where ${\bf P}g$ is the $L^{2}_{v}-$projection of $g$ on the null
space for $L$ for given $(t,x)$ and we can further denote ${\bf P}g$
by
\begin{equation*}
{\bf P}g=\left\{\rho_{g}(t,x)+v\cdot
u_{g}(t,x)+\frac{|v|^{2}-3}{2}\theta_{g}(t,x)\right\}\sqrt{\mu}.
\end{equation*}
Here $\rho_{g}(t,x),u_{g}(t,x),$ and $\theta_{g}(t,x)$ also represent
the density, velocity, and temperature fluctuation physically
respectively. It is traditional to call $\FP g$ the macroscopic part and $\{\FI-\FP\}g$ the microscopic part. 
In addition, the linearized Boltzmann collision operator $L$
satisfies 
$$
Lg=\nu(v)g-Kg,
$$
where $\nu(v)$ is called the collision frequency which is given by
\begin{equation*}\label{1.21}
\nu(v)=\int_{{\R}^{3}\times \S^2}|(v-v_\ast)\cdot\om|\mu(u)B(\theta)dv_\ast d\omega\thicksim \langle v\rangle=\sqrt{1+|v|^2},
\end{equation*}
and operator $K=K_{2}-K_{1}$ is defined
as in the following 
\begin{eqnarray}\label{defK}
\left\{\begin{array}{rll}
\begin{split}
[K_{1}g](v)=&\int_{{\R}^{3}\times \S^2}|(v-v_\ast)\cdot\om|\mu^{\frac{1}{2}}(v_\ast)\mu^{\frac{1}{2}}(v)g(v_\ast)dv_\ast d\omega,\\
[K_{2}g](v)=&\int_{{\R}^{3}\times \S^2}|(v-v_\ast)\cdot\om|\mu^{\frac{1}{2}}(v_\ast)\left\{\mu^{\frac{1}{2}}(v_\ast')g(v')
+\mu^{\frac{1}{2}}(v')g(v_\ast')\right\}dv_\ast d\omega.
\end{split}
\end{array}\right.
\end{eqnarray}
It is well known that $L\geq
0$ and there exists $\de_0>0$ such that
\begin{equation*}\label{Lco}
\langle Lg,g\rangle \geq \de_0 \|\{{\bf I- P}\}g\|^2_{\nu}.
\end{equation*}
For later use, we also define the following Burnett functions $A(v)$ and $B(v)$
as
$$
A(v)=(A(v)_{ij})_{3\times3}=\left\{v\otimes
v-\frac{1}{3}|v|^{2}I\right\}\sqrt{\mu},\ \ B(v)=(B_j(v))_{3\times1}=\frac{|v|^2-5}{2}v\sqrt{\mu},
$$
where $I$ is the identity matrix.

From \eqref{fioe}, \eqref{f1e}, \eqref{f2e}, \eqref{f3e}, \eqref{f4e}, \eqref{f5e} and \eqref{f6e}, one can further define $f_1, f_2, f_3$, $f_4$, $f_5$, and $f_6$ as follows:
\begin{eqnarray}\label{f1-5}
\left\{\begin{array}{rlll}
\begin{split}
&f_1=u_1\cdot v\sqrt{\mu},\\[2mm]
&f_2=\left\{\rho_1+\frac{1}{6}(|u_1|^2+3\ta_1)(|v|^2-3)\right\}\sqrt{\mu}-L^{-1}[A(v)]:\na_xu_1+\frac{1}{2}A(v):u_1\otimes u_1,\\[2mm]
&f_3=u_2\cdot v\sqrt{\mu}+\rho_1u_1\cdot v\sqrt{\mu}+L^{-1}\left\{-v\cdot\na_xf_2+\Gamma(f_1,f_2)+\Gamma(f_2,f_1)\right\},\\[2mm]
&f_4=
\left\{\rho_2+\frac{1}{6}(2u_1\cdot u_2+3\ta_2)(|v|^2-3)+\frac{1}{6}\rho_1u_1^2(|v|^2-3)\right\}\sqrt{\mu}+\rho_1\ta_1\frac{|v|^2-3}{2}\sqrt{\mu}
\\[2mm]
&\qquad+\ta_1^2\frac{|v|^2}{2}\sqrt{\mu}+
L^{-1}\left\{-\pa_tf_2-v\cdot\na_xf_3
+\Gamma(f_2,f_2)+\Gamma(f_1,f_3)+\Gamma(f_3,f_1)\right\},\\[2mm]
&f_5=u_3\cdot v\sqrt{\mu}+\rho_1 u_2\cdot v\sqrt{\mu}+\rho_2 u_1\cdot v\sqrt{\mu}\\[2mm]&\qquad+L^{-1}\left\{-\pa_tf_3-v\cdot\na_xf_4
+\Gamma(f_2,f_3)+\Gamma(f_3,f_2)+\Gamma(f_1,f_4)+\Gamma(f_4,f_1)\right\},\\[2mm]
&f_6=
L^{-1}\left\{-\pa_tf_4-v\cdot\na_xf_5
+\sum\limits_{i+j=6}\{\Gamma(f_i,f_j)+\Gamma(f_j,f_i)\}\right\}.
\end{split}
\end{array}\right.
\end{eqnarray}
Here, we can take $\FP f_6=0$,
since the expansion is truncated.
It is worth stressing that
the macroscopic parts of $f_1$, $f_2$, $f_3$, $f_4$ and $f_5$ stem from the following Taylor expansion of the local Maxwellian:
\begin{equation*}\label{Mexp}
\begin{split}M&_{[1+\eps^2\rho_1+\eps^4\rho_2,\eps u_1+\eps^3 u_2+\eps^5 u_3,
1+\eps^2\ta_1+\eps^4\ta_2]}\\=&\frac{1+\eps^2\rho_1+\eps^4\rho_2}{(2\pi(1+\eps^2\ta_1+\eps^4\ta_2))^{3/2}} e^{- \frac{|v-\eps u_1-\eps^3 u_2-\eps^5u_3|^{2}}{2(1+\eps^2\ta_1+\eps^4\ta_2)}}\\
=&M_{[1,0,1]}\bigg\{\eps u_1\cdot v+\eps^2\Big[\rho_1+\frac{1}{6}(|u_1|^2+3\ta_1)(|v|^2-3)+\frac{1}{2}A(v):u_1\otimes u_1\Big]
\\&+\eps^3\Big[u_2\cdot v+\rho_1u_1\cdot v+\ta_1u_1\cdot B(v)+\frac{1}{6}P_1(v):u_1\otimes u_1\otimes u_1\Big]
\\&+\eps^4\Big[\rho_2+\frac{1}{6}(2u_1\cdot u_2+3\rho_1\ta_1+3\ta_2)(|v|^2-3)+\ta_1^2\frac{(|v|^2-3)(|v|^2-5)}{4}
+A(v):u_1\otimes u_2
\\&\qquad\quad+\frac{1}{2}\rho_1(v\otimes v-I):u_1\otimes u_1+\frac{1}{2}\ta_1(\frac{|v|^2-7}{2}v\otimes v-I\frac{|v|^2-5}{2}):u_1\otimes u_1
\\&\qquad\quad+\frac{1}{24}P_2(v):u_1\otimes u_1\otimes u_1\otimes u_1\Big]
\\&+\eps^5\Big[u_3\cdot v+\rho_1 u_2\cdot v+\rho_2 u_1\cdot v+\ta_1u_2\cdot B(v)+\ta_2u_1\cdot B(v)+\rho_1\ta_1u_1\cdot B(v)
\\&\qquad\quad+\frac{1}{2}P_1(v):u_1\otimes u_1\otimes u_2+\frac{1}{6}\rho_1P_1(v):u_1\otimes u_1\otimes u_1
\\&\qquad\quad+\frac{1}{6}\ta_1\big(v\otimes v\otimes v\frac{|v|^2-9}{2}
-3I\otimes v\frac{|v|^2-3}{2}+6v\otimes I\big):u_1\otimes u_1\otimes u_1
\\&\qquad\quad+\frac{1}{120}P_3(v):u_1\otimes u_1\otimes u_1\otimes u_1\otimes u_1\Big]+O(\eps^6)\bigg\},
\end{split}
\end{equation*}
where $P_1\sqrt{\mu}$, $P_2\sqrt{\mu}$ and $P_3\sqrt{\mu}$ are the {\it super Burnett functions} given by
\begin{eqnarray*}\label{Pi}
\left\{
\begin{array}{rll}
\begin{split}
P_1&=v\otimes v\otimes v-3I\otimes v,\\[2mm]
P_2&=v\otimes v\otimes v\otimes v-6 I\otimes v\otimes v+3I,\\[2mm]
P_3&=v\otimes v\otimes v\otimes v\otimes v-10 I\otimes v\otimes v\otimes v+15I\otimes v.
\end{split}
\end{array}\right.
\end{eqnarray*}
Moreover, here and in the sequel, we define
${\bf M}:\FN=\sum\limits_{i=1}^m \sum\limits_{j=1}^n a_{ij}b_{ij}$
for two $m\times n$ matrices $\FM =(a_{ij})$ and $\FN=(b_{ij})$.
It is also straightforward to check that $P_1(v):u_1\otimes u_1\otimes u_1$, $P_2(v):u_1\otimes u_1\otimes u_1\otimes u_1$,
$P_3(v):u_1\otimes u_1\otimes u_1\otimes u_1\otimes u_1$, $(\frac{|v|^2-7}{2}v\otimes v-I\frac{|v|^2-5}{2})$ and $\big(v\otimes v\otimes v\frac{|v|^2-9}{2}
-3I\otimes v\frac{|v|^2-3}{2}+6v\otimes I\big):u_1\otimes u_1\otimes u_1$ all belong to the orthogonal complement of $\mathscr{N}(L)$.

In addition, \eqref{f2e}, \eqref{f3e} and \eqref{f4e} give rise to the following so-called incompressible Navier-Stokes-Fourier equations with viscous heating
\begin{eqnarray}\label{INSF}
\left\{\begin{array}{rlll}
\begin{split}
&\na_x\cdot u_1=0,\\
&\pa_tu_1+u_1\cdot\na_xu_1+\na_xp_1=\mu_\ast\Delta u_1,\ \ p_1=\rho_1+\theta_1,\\
&\pa_t\left(\frac{3}{2}\ta_1-\rho_1\right)+u_1\cdot\na_x\left(\frac{3}{2}\ta_1-\rho_1\right)=\ka_\ast\Delta\theta_1+\frac{1}{2}\mu_\ast
\left|\na_xu_1+(\na_xu_1)^T\right|^2,\\
&\rho_1(0,x)=\rho_{1,0}(x),\ \ u_1(0,x)=u_{1,0}(x),\ \ \ta_1(0,x)=\ta_{1,0}(x),
\end{split}\end{array}\right.
\end{eqnarray}
and \eqref{f4e}, \eqref{f5e} and \eqref{f6e} lead us to
\begin{eqnarray}\label{INSF2}
\left\{\begin{array}{rlll}
\begin{split}
&\pa_t \rho_1+\na_x\cdot u_2+\na_x\cdot(\rho_1u_1)=0,\\[2mm]
&\pa_tu_2+u_1\na_x\cdot u_2+\na_x u_1\cdot u_2+u_1\cdot \na_x u_2
+\na_x\left(\rho_2+\ta_2-\frac{1}{3}u_1\cdot u_2\right)
\\[2mm]&\quad=\mu_\ast\Delta u_2+\frac{\mu_\ast}{3}\na_x\na_x\cdot\{\FI-\FP_0\}u_2
+\na_x\cdot\langle \pa_t f_2, L^{-1}A(v)\rangle
\\[2mm]&\qquad-\na_x\cdot\langle\Gamma(f_2,f_2),L^{-1}A(v)\rangle
-\na_x\cdot\langle\Gamma(f_1,\{\FI-\FP\}f_3)+\Gamma(\{\FI-\FP\}f_3,f_1),L^{-1}A(v)\rangle
\\[2mm]&\qquad-\pa_t(\rho_1u_1)-2\na_x\cdot(\rho u_1\otimes u_1)-\na_{x}(\rho_1\ta_1+\frac{5}{2}\ta_1^2-\frac{1}{3}\rho_1|u_1|^2)
\\[2mm]&\qquad+\mu_\ast\Delta (\rho_1u_1)+\frac{\mu_\ast}{3}\na_x\na_x\cdot(\rho_1u_1),\\[2mm]
&\pa_t \left(\frac{3}{2}\ta_2-\rho_2\right)+\frac{5}{2}\na_x\cdot(u_1\ta_2)+
\frac{5}{6}\na_x\cdot(u_1u_1\cdot u_2)\\[2mm]&\quad=\ka_\ast\Delta\ta_2+\ka_\ast\Delta(\rho_1\ta_1+\ta_1^2)+\frac{2\ka_\ast}{3}\Delta(u_1\cdot u_2)
+\frac{\ka_\ast}{3}\Delta(\rho_1 u_1^2)-\frac{1}{2}\pa_t(2u_1\cdot u_2+\rho_1 u_1^2+3\rho_1\ta_1)\\[2mm]&
\qquad-\frac{5}{2}\na_x\cdot(u_1(\rho_1\ta_1+\ta_1^2))-\frac{5}{6}\na_x\cdot(u_1\rho u_1^2)
\\[2mm]&
\qquad-\na_x\cdot \langle L^{-1}\left\{-\pa_tf_3-v\cdot\na_x\{\FI-\FP\}f_4
+\Gamma(f_2,f_3)+\Gamma(f_3,f_2)\right\},B(v)\rangle
\\[2mm]&
\qquad-\na_x\cdot \langle L^{-1}\left\{\Gamma(f_1,\{\FI-\FP\}f_4)+\Gamma(\{\FI-\FP\}f_4,f_1)\right\},B(v)\rangle,\\[2mm]
&\pa_t\rho_2+\na_x\cdot u_3+\na_x\cdot(\rho_1u_2)+\na_x\cdot(\rho_2u_1)=0,\\[2mm]
&\rho_2(0,x)=\rho_{2,0}(x),\ u_2(0,x)=u_{2,0}(x),\ta_2(0,x)=\ta_{2,0}(x).
\end{split}\end{array}\right.
\end{eqnarray}
Here, ${\FP_0}$ is the divergence free operator on torus and defined as
\begin{eqnarray}\label{Divop}
\left\{
\begin{array}{rll}
\begin{split}
{\bf P}_{0}u&=\sum\limits_{m\in \Z^3}\left[p_{0}(m)\int_{\T^3}u(x)e^{-2\pi im\cdot x}dx\right]e^{2\pi im\cdot x},\\[2mm]
p_{0}(m)&=\left(\delta_{jk}-\frac{m_{j}m_{k}}{|m|^2}\right)_{3\times
3},\ \de_{jk}\ \text{is the Kronecker delta}.
\end{split}
\end{array}\right.
\end{eqnarray}
Moreover, $\mu_\ast=\frac{1}{10}\langle L^{-1}[A(v)],A(v)\rangle$ and $\ka_\ast=\frac{1}{3}\langle L^{-1}[B(v)],B(v)\rangle$ represent the viscosity and heat conductivity, respectively. It should be pointed out that
\begin{equation*}
\begin{split}
\frac{1}{2}\mu_\ast
\left|\na_xu_1+(\na_xu_1)^T\right|^2\eqdef&\frac{1}{2}\mu_\ast {\text trace}\left((\na_xu_1+(\na_xu_1)^T)^2\right)\\
\eqdef &\frac{1}{2}\mu_\ast(\na_xu_1+(\na_xu_1)^T):(\na_xu_1+(\na_xu_1)^T)\\
=&\mu_\ast\left(\sum\limits_{i,j}\pa_ju_1^i\pa_iu_1^j+\sum\limits_{i}(\pa_iu_1^i)^2\right)
\end{split}
\end{equation*}
is the viscous heating term, which does not appear in the classical INSF equations, cf. \cite{Bardos-Golse-Levermore-1993,Guo-2006}.

\subsection{Main results}
For $l\geq0$, denote $w_l=\langle v\rangle^l=(1+|v|^2)^{l/2}.$ We now state our main results as follows:

\begin{theorem}\label{mre}
Let $F_0(x,v)=\mu+\eps\sqrt{\mu}\left\{\sum\limits_{r}^6\eps^{r-1}f_r(0,x,v)+\eps^{4-\beta}R_0(x,v)\right\}\geq 0$ with $0<\be<1/2$.
Assume
\begin{description}
  \item[$(\CA_1)$] $f_{r}(0,x,v)$ $(r=1,2,\cdots,6)$ possess the zero-mean hydrodynamic fields:
$$
(f_{r}(0,x,v),[1,v,(v^2-3)]\sqrt{\mu})=0,
$$
namely,
\begin{eqnarray}\label{meanm}
\left\{\begin{array}{rll}
\begin{split}
& \int_{\T^3}\rho_{1,0}dx=\int_{\T^3}\rho_{2,0}dx=0,\\
&\int_{\T^3}(3\ta_{1,0}+|u_{1,0}|^2)dx=\int_{\T^3}(3\ta_{2,0}+2u_{1,0}\cdot u_{2,0}+\rho_{1,0}|u_{1,0}|^2+3\rho_{1,0}\ta_{1,0})dx=0,\\
&\int_{\T^3}u_{1,0}dx=\int_{\T^3}(u_{2,0}+\rho_{1,0}u_{1,0})dx=\int_{\T^3}(u_{3,0}+\rho_{1,0}u_{2,0}+\rho_{2,0}u_{1,0})dx=0,
\end{split}
\end{array}\right.
\end{eqnarray}
in particular, the velocity fields also satisfy
\begin{equation*}\label{urdf}
\FP_0 u_{r,0}=u_{r,0}\ \text{for}\ r=1,2,3,
\end{equation*}
and there exists a sufficiently small $\vps_0>0$  such that
\begin{equation*}\label{pbd}
\|u_{1,0}\|_{H^4}+\|\ta_{1,0}\|_{H^{4}}\leq \vps_0;
\end{equation*}
 \item[$(\CA_2)$] for $l>3/2$,
 $\eps^{3/2}\|w_lR_0\|_{\infty}+\|R_0\|_2$ is sufficiently small and
\begin{equation*}\label{con.R0}
(R_0(x,v),[1,v,v^2]\sqrt{\mu})=0.
\end{equation*}
\end{description}
Then
the Cauchy problem \eqref{BE} and \eqref{ID} admits a unique global solution
$$
F(t,x,v)=\mu+\eps\sqrt{\mu}\left\{\sum\limits_{r=1}^6\eps^{r-1}f_r+\eps^{4-\beta}R\right\}\geq0,
$$
with $f_r$ $(r=1,2,\cdots,6)$ satisfying \eqref{f1-5}, \eqref{INSF} and \eqref{INSF2} and $R$ satisfying
\eqref{R} and \eqref{RID}, respectively. Moreover,
there exists a constant $\la>0$ and a polynomial $P$ with $P(0)=0$ such that for any $t\geq0$ and $l>3/2$
\begin{equation*}
\begin{split}
\eps^{3/2}&\|w_lR(t)\|_{\infty}+\|R(t)\|_2\\
\leq& Ce^{-\la t}\left\{\eps^{3/2}\|w_lR_0\|_{\infty}+\|R_0\|_2+\eps^{\be}P\left(
\|[u_{1,0},\ta_{1,0}]\|_{H^{16}}+\|[u_{2,0},\ta_{2,0}]\|_{H^{14}}\right)\right\}.
\end{split}
\end{equation*}
\end{theorem}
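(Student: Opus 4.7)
The plan is to first construct $f_1,\ldots,f_6$ in closed form via \eqref{f1-5} by solving the fluid systems \eqref{INSF} and \eqref{INSF2}, and then to close an $L^2$--$L^\infty$ estimate for the remainder equation \eqref{R} that yields exponential decay in time. Using the smallness assumption on $(u_{1,0},\ta_{1,0})$ in $H^4$ and the zero-mean conditions in $(\CA_1)$, I would solve \eqref{INSF} globally by a standard energy method on $\T^3$: the viscous heating term is a harmless cubic forcing once $u_1$ is small, and the preserved zero-mean property combined with the Poincar\'e inequality yields exponential-in-time decay of $(\rho_1,u_1,\ta_1)$ in high Sobolev norms. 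The second system \eqref{INSF2} is effectively linear in $(\rho_2,u_2,\ta_2,u_3)$ with forcing built from already-controlled $f_1$--$f_4$ quantities, so the same energy method produces exponentially decaying high-regularity bounds. Differentiating \eqref{f1-5} then gives each $f_r$ and its time/spatial derivatives in weighted $L^2$ and $L^\infty$ with an $e^{-\la_0 t}P(\cdots)$ bound; the high Sobolev indices $H^{16}$ and $H^{14}$ in the conclusion trace back to the $\pa_tf_6$ and $v\cdot\na_xf_6$ source in \eqref{R}, since $f_6$ itself already consumes many derivatives of $(u_1,\ta_1,u_2,\ta_2)$.

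For the $L^2$ estimate I would test \eqref{R} against $R$ to obtain, using the coercivity of $L$,
\begin{equation*}
\eps\frac{d}{dt}\|R\|_2^2+\frac{2\de_0}{\eps}\|\{\FI-\FP\}R\|_\nu^2\le\sum_{k=1}^6\eps^{k-1}\left|\langle\Gamma(f_k,R)+\Gamma(R,f_k),R\rangle\right|+\eps^{4-\be}\left|\langle\Gamma(R,R),R\rangle\right|+(\text{source}).
\end{equation*}
The term $k=1$ is critical as it carries no $\eps$-smallness; it is controlled by $\|u_1\|_\infty\|R\|_\nu^2$, which the dissipation absorbs using smallness and exponential decay of $u_1$ from Step~1. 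The $\Gamma(R,R)$ piece is bounded by $\|w_lR\|_\infty\|R\|_\nu^2$ paid for by $\eps^{4-\be}$. To recover dissipation on $\FP R$, I would run the standard macroscopic estimate of \cite{Guo-2006,Guo-2010}, testing \eqref{R} against carefully chosen moments; the zero-mean condition in $(\CA_2)$ is preserved by the dynamics thanks to the conservation laws and allows Poincar\'e on $\T^3$. This produces
\begin{equation*}
\eps\frac{d}{dt}\|R\|_2^2+\frac{c}{\eps}\|\{\FI-\FP\}R\|_\nu^2+c\|\FP R\|^2\le C\eps^{2\be}e^{-\la_0 t}P(\cdots)+C\eps^{4-\be}\|w_lR\|_\infty\|R\|_\nu^2,
\end{equation*}
whence exponential decay of $\|R(t)\|_2$ will follow once the $L^\infty$ norm is also controlled.

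For the $L^\infty$ bound I would follow the Guo scheme \cite{Guo-2010}: write the mild formulation of \eqref{R} for $h=w_lR$ along backward characteristics, split $L=\nu-K$, and iterate Duhamel's formula twice to exploit the smoothing of $K_w=w_lK(w_l^{-1}\cdot)$. The double iteration expresses $\|h(t)\|_\infty$ in terms of $\|R(s)\|_2$ (handled by the $L^2$ step), the data $\|w_lR_0\|_\infty$, and controllable sources and nonlinearities with favorable $\eps$-weights. A continuity argument on the functional $\eps^{3/2}\|w_lR(t)\|_\infty+\|R(t)\|_2$, combined with smallness of initial data and $\vps_0$, closes a global-in-time estimate for this quantity and extracts the exponential factor $e^{-\la t}$ together with the $\eps^\be P(\cdots)$ contribution from the source in \eqref{R}.

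The central difficulty is the term $\Gamma(f_1,R)+\Gamma(R,f_1)$, which appears at order $\eps^0$. Because $f_1=u_1\cdot v\sqrt{\mu}$ is small only through the NSF data (and not through an explicit $\eps$-factor), absorbing it by the microscopic dissipation $\frac{\de_0}{\eps}\|\{\FI-\FP\}R\|_\nu^2$ has to be coordinated with the macroscopic-to-microscopic coercivity argument, and the $\FP R$ piece must be absorbed by the macroscopic dissipation derived from the conservation laws; crucially, one must propagate exponential-in-time decay of $u_1$ uniformly in $\eps$ all the way into the remainder estimate. A secondary subtlety is controlling the inhomogeneous source $\eps^{1+\be}(\pa_tf_5+v\cdot\na_xf_6)+\eps^{2+\be}\pa_tf_6$: since $f_5$ and $f_6$ are built from $L^{-1}$ applied to compositions of lower-order quantities and their time derivatives, obtaining $\eps^\be$-small bounds in the weighted $L^2$ and $L^\infty$ spaces requires tracking many derivatives of the fluid solutions, which dictates the $H^{16}$ and $H^{14}$ thresholds in the final statement.
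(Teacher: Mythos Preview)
Your proposal is correct and follows essentially the same approach as the paper: construct $f_1,\ldots,f_6$ from the fluid systems with exponential decay via energy methods and Poincar\'e on $\T^3$, then close the remainder in the $L^2$--$L^\infty$ framework of \cite{Guo-2010,EGKM-15} using the macroscopic coercivity lemma, the double Duhamel iteration for $K_w$, and absorbing the critical $\Gamma(f_1,R)$ term through the smallness $\|u_1\|_\infty\lesssim\vps_0$. The only cosmetic difference is that the paper packages existence via a Picard iteration $R^{\ell}\mapsto R^{\ell+1}$ in a ball $\FX_\de$ rather than a continuity argument, but the underlying estimates are identical.
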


A great amount of effort has been paid on the study of the hydrodynamic approximation (limits) to the Boltzmann equation,
since the pioneering work by Hilbert, who introduce his famous expansion in terms of Knudsen number $\eps$ in \cite{Hilbert} to explore
the connection between the fluid dynamics and the Boltzmann equation. Grad \cite{Grad-1965} and Nishida \cite{Nishida-1978} investigated the asymptotic equivalence of the Boltzmann equation and the compressible Euler equations for gas dynamics, while Caflisch \cite{Caflisch-1980} and Lachowiz \cite{Lachowicz-1987} also studied the same issue by different methods. Mathematical descriptions on the closeness of the Chapman-Enskog expansion \cite{Chapman-Cowling-1990} of the Boltzmann equation to the solutions of the compressible Navier-Stokes equations were obtained by Lachowiz \cite{Lachowicz-1992}, Kawashima-Matsumura-Nishida \cite{Kawashima-Matsumura-Nishida-1979} and Liu-Yang-Zhao \cite{Liu-Yang-Zhao-2014}.

In the context of diffusive scaling, the problem can be faced only in the low mach number regime, in this situation, the Boltzmann solution shall be close to the INSF system, see in particular, a formal derivation by Bardos-Golse-Levermore \cite{Bardos-Golse-Levermore-1991} and \cite{Bardos-Golse-Levermore-1993} for a general momentum argument of deriving global Levay solution of INSF from global renormalized solution \cite{DiPerna-Lions-1989}
of the Boltzmann equation with additional assumption which remained unverified. Later on, there are a huge number of papers concerning this topic, see \cite{Golse-2005,Golse-Saint-Raymond-2009,Jiang-Levermore-Masmoudi-2010,Jiang-Masmoudi-2015,Levermore-Masmoudi-2010,Lions-Masmoudi-2001,
Masmoudi-Saint-Raymond-2003,Saint-Raymond-2009}. We point out that some of assumptions in \cite{Bardos-Golse-Levermore-1993} have been removed in those works. A full proof for the INSF limits of the Boltzmann equation has been given by Golse-Saint-Raymond \cite{Gosle-Saint-Raymond-2004}. There also have been extensive investigation on the convergence of the smooth solutions of the INSF system to the Botlzamnn equation, see \cite{Bardos-Ukai,DeMasi-Esposito-Lebowitz-1989,Esposito-Pulvirenti-2004,Guo-2006,Liu-Zhao-2011,Ukai-Asano-1983}.

We also mention that when the solutions of the Boltzmann equation are a small perturbation of some nontrivial profiles, for instant, some basic wave patterns, stationary solutions, time-periodic solutions, etc., the time-asymptotic equivalence of the Boltzmann equation and the compressible Navier-Stokes equation are also studied, cf. \cite{DL-VPB,Duan-Ukai-Yang-Zhao,Huang-Wang-Yang-2010-1,Huang-Wang-Yang-2010-2,Huang-Wang-Wang-Yang-2013,Huang-Wang-Wang-Yang-2016,
Liu-Yang-Yu-Zhao-2006,Liu-Yu-2004,Xin-Zeng-2010,Yang-Zhao-2006,Yang-Zhao-2005,Yu-2005} and the references cited therein.

Recently, a new model called the INSF system with viscous heating was derived by Bardos-Levermore-Ukai-Yang \cite{Bardos-Levermore-Ukai-Yang-2008}.
The aim of the present paper is to justify such an incompressible hydrodynamic approximation to the Boltzmann equation in a periodic box via an $L^2-L^\infty$ method developed in \cite{EGKM-13,EGKM-15,Guo-2010,Guo-Jang-2010,Guo-Jang-Jiang-2009,Guo-Jang-Jiang-2010}.
We now outline a few key points of the paper which are distinct to some extent with the previous work by Bardos-Levermore-Ukai-Yang \cite{Bardos-Levermore-Ukai-Yang-2008}:
\begin{itemize}
\item The odd-even decomposition of the rescaled Boltzmann equation is more complicate and accurate, namely, we expand the solution of the Boltzmann equation up to sixth order with a remainder.

\item To determine the diffusive coefficients, we introduce the super Burnett functions which play a vital role in defining the macroscopic parts of the diffusive coefficients.

\item A good structure of the INSF system with viscous heating is observed so that the smooth solutions of the macroscopic equations
are obtained via an elementary energy method.

\item We design an elaborate space $\FX_\de$ to capture the properties of the solution of the remainder equation in $L^2\cap L^\infty$ setting.
\end{itemize}

The organization of the paper is as follows. Section \ref{pre} contains some elementary identities and estimates regarding the Boltzmann collision operators. We provide a direct approach to derive the INSF equations with viscous heating and present the construction of the diffusive coefficients in Section \ref{INSFC}. Sections \ref{l2leq} and \ref{lifleq} are devoted to the $L^2$ and $L^\infty$ estimates of the linear equation of the remainder, respectively. The proof of our main result Theorem \ref{mre} is concluded in Section \ref{proof}.

\subsection{Notations and Norms}
Throughout this paper,  $C$ denotes some generic positive (generally large) constant and $\la,\la_1,\la_2$ as well as $\la_0$ denote some generic positive (generally small) constants, where $C$ may take different values in different places. $D\lesssim E$ means that  there is a generic constant $C>0$
such that $D\leq CE$. $D\sim E$
means $D\lesssim E$ and $E\lesssim D$.
 Let $1\leq p\leq \infty$, we denote $\Vert \,\cdot \,\Vert _{p }$ either the $L^{p }(\T^3
\times \R^{3})-$norm or the $L^{p }(\T^3 )-$norm, and denote $\|\cdot \|_{\nu}\equiv \|\nu^{1/2}\cdot
\|_2$. Moreover,
$(\cdot,\cdot)$ denotes the $L^{2}$ inner product in
$\T^3\times {\R}^{3}$ or $\T^3$  with
the $L^{2}$ norm $\|\cdot\|_2$, and $\langle\cdot,\cdot\rangle$ stands for the $L^{2}$ inner product in $\R^3_v$.

\section{Preliminary}\label{pre}
 In this section, we give some basic identities and significant estimates which will be used in the later proofs. The first one is concerned with the relations between the nonlinear operator $\Ga$ and linear operator $L.$
\begin{lemma}\label{NL}
It holds that
\begin{equation}\label{ga2-3}
\Ga(\FP g,\FP g)=\frac{1}{2}L\left\{\frac{(\FP g)^2}{\sqrt{\mu}}\right\}, \ \
\Ga(\FP g,(\FP g)^2\mu^{-1/2})+\Ga((\FP g)^2\mu^{-1/2},\FP g)=\frac{1}{3}L\left\{\frac{(\FP g)^3}{\mu}\right\}.
\end{equation}
\end{lemma}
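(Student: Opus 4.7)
The plan is to reduce both identities in \eqref{ga2-3} to the collision invariance of linear combinations of $\{1, v, |v|^2\}$. First I would parametrise
\[\FP g = p(v)\sqrt{\mu}, \qquad p(v) = \rho_g + v\cdot u_g + \tfrac{|v|^2-3}{2}\theta_g,\]
so that $(\FP g)^2/\sqrt{\mu} = p^2\sqrt{\mu}$ and $(\FP g)^3/\mu = p^3\sqrt{\mu}$, and rewrite both sides of each identity purely in terms of $Q(p^a\mu, p^b\mu)$ using the definitions of $\Ga$ and $L$. After multiplying through by $\sqrt{\mu}$, the first identity in \eqref{ga2-3} becomes
\[ 2\,Q(p\mu,p\mu) + Q(\mu,p^2\mu) + Q(p^2\mu,\mu) = 0, \]
and the second becomes
\[ 3\,Q(p\mu,p^2\mu) + 3\,Q(p^2\mu,p\mu) + Q(\mu,p^3\mu) + Q(p^3\mu,\mu) = 0. \]

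Next I would exploit the Maxwellian detailed balance $\mu(v)\mu(v_*) = \mu(v')\mu(v'_*)$, which follows immediately from the conservation of kinetic energy $|v|^2 + |v_*|^2 = |v'|^2 + |v'_*|^2$ in an elastic collision, to write
\[ Q(p^a\mu,p^b\mu)(v) = \int_{\R^3 \times \S_+^2} \mu\,\mu_* \bigl[(p'_*)^a (p')^b - p_*^a\, p^b \bigr]\, |(v - v_*)\cdot\om|\, dv_* d\om \]
for every nonnegative integers $a, b$. Substituting this into the two reduced identities and factoring out the common nonnegative weight $\mu\mu_* |(v-v_*)\cdot\om|$ reduces each claim to a pointwise polynomial identity in the quadruple $(p, p_*, p', p'_*)$.

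Those polynomial identities are exactly the binomial expansions
\[ (p' + p'_*)^2 = (p + p_*)^2 \qquad\text{and}\qquad (p' + p'_*)^3 = (p + p_*)^3. \]
Both hold because $p$ is a linear combination of the classical collision invariants $1, v_1, v_2, v_3, |v|^2$: conservation of mass, momentum and energy in an elastic collision yields $p(v) + p(v_*) = p(v') + p(v'_*)$ directly. I do not anticipate any serious obstacle here; the lemma is purely algebraic once $\FP g$ is expressed as $p\sqrt{\mu}$, and the specific coefficients $1/2$ and $1/3$ on the right-hand sides of \eqref{ga2-3} are dictated precisely by the central binomial coefficients $\binom{2}{1}$ and $\binom{3}{1}$ that appear when matching cross terms. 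The only bookkeeping care needed is to verify that the formula for $Q$ still makes sense over $\S_+^2$ rather than $\S^2$, which is automatic since the derivation never invokes a change of variables over $\omega$.
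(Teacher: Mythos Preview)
Your proof is correct and complete. The paper itself does not give a self-contained argument: it cites \cite[pp.648--649]{Guo-2006} for the first identity and asserts that the second ``can be verified similarly,'' omitting details. Your reduction to the binomial identities $(p'+p'_*)^{2}=(p+p_*)^{2}$ and $(p'+p'_*)^{3}=(p+p_*)^{3}$ via detailed balance $\mu\mu_*=\mu'\mu'_*$ and the collision invariance of $p$ is exactly the standard argument behind that citation, so there is no discrepancy in approach.
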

\begin{proof}
The first identity in \eqref{ga2-3} has been proved in \cite[pp.648-649]{Guo-2006}, the second one can be verified similarly, we omit the details for simplicity. This completes the proof of Lemma \ref{NL}.
\end{proof}
The following significant relations are quoted form \cite[Lemma 4.4, pp.711]{Bardos-Golse-Levermore-1993} and \cite[Propostion 2.5, pp.17]{Bardos-Levermore-Ukai-Yang-2008} as well as \cite[(2.36)-(2.36), pp.16-17]{Bardos-Levermore-Ukai-Yang-2008}.
\begin{lemma}\label{Burt}
It holds that
\begin{equation}\label{AA}
\begin{split}
\left\langle A_{ij}(v), L^{-1}A_{kl}(v)\right\rangle=&\frac{1}{10}\left\langle A(v): L^{-1}A(v)\right\rangle\left(\de_{ik}\de_{jl}+\de_{il}\de_{jk}-\frac{2}{3}\de_{ij}\de_{kl}\right)
\\=&\mu_\ast\left(\de_{ik}\de_{jl}+\de_{il}\de_{jk}-\frac{2}{3}\de_{ij}\de_{kl}\right),
\end{split}
\end{equation}
\begin{equation}\label{BB}
\begin{split}
\left\langle B_i(v), L^{-1}B_j(v)\right\rangle=\frac{1}{3}\left\langle B(v)\cdot L^{-1}B(v)\right\rangle\de_{ij}
=\ka_\ast\de_{ij},
\end{split}
\end{equation}
\begin{equation}\label{AtB}
\begin{split}
\left\langle A_{ij}(v), v_kL^{-1}B_l(v)\right\rangle-\left\langle A_{ik}(v), v_jL^{-1}B_l(v)\right\rangle
=\frac{2}{3}\ka_\ast(\de_{ik}\de_{jl}-\de_{ij}\de_{kl}),
\end{split}
\end{equation}
and
\begin{equation}\label{AB}
\begin{split}
\left\langle L^{-1}A_{ij}(v), v_kL^{-1}B_{l}(v)\right\rangle=&\frac{1}{10}\left\langle L^{-1}A(v): v\otimes L^{-1}B(v)\right\rangle\left(\de_{ik}\de_{jl}+\de_{il}\de_{jk}-\frac{2}{3}\de_{ij}\de_{kl}\right).
\end{split}
\end{equation}
In addition,
it follows
\begin{equation}\label{ABv}
\begin{split}
&\left\langle\Ga\left(v_i\sqrt{\mu},L^{-1}A_{kl}(v)\right)+\Ga\left(L^{-1}A_{kl}(v),v_i\sqrt{\mu}\right), L^{-1}B_j(v)\right\rangle
+\left\langle  v_iA_{kl}(v), L^{-1}B_j(v)\right\rangle\\
&\qquad=\left\langle A_{ij}(v): L^{-1}A_{kl}(v)\right\rangle.
\end{split}
\end{equation}
\end{lemma}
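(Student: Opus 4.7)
The plan is to exploit the $O(3)$-invariance of the linearized collision operator: because $\mu(v)$ and the cross section $|(v-v_\ast)\cdot\om|$ are both rotation-invariant, $L$ commutes with the pullback action $(R_Tg)(v):=g(T^{-1}v)$ for every $T\in O(3)$, and the same therefore holds for $L^{-1}$ on $\mathscr{N}(L)^{\perp}$. Every scalar obtained by pairing $L^{-1}$ with a tensor-valued polynomial in $v$ multiplied by $\sqrt{\mu}$ is then an \emph{isotropic} tensor, whose form is fixed by classical invariant theory together with the symmetry/trace properties of $A$ and $B$ up to one scalar, which is determined by a single explicit contraction.

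For \eqref{AA}, $T_{ijkl}:=\langle A_{ij},L^{-1}A_{kl}\rangle$ is an isotropic rank-four tensor, symmetric and traceless in each of the pairs $(i,j)$ and $(k,l)$; the general isotropic decomposition $\alpha\de_{ij}\de_{kl}+\beta\de_{ik}\de_{jl}+\gamma\de_{il}\de_{jk}$ is thereby forced to $\mu_\ast(\de_{ik}\de_{jl}+\de_{il}\de_{jk}-\tfrac{2}{3}\de_{ij}\de_{kl})$, and the double trace $T_{ijij}=10\mu_\ast=\langle A:L^{-1}A\rangle$ fixes the constant. Identity \eqref{BB} is the rank-two analogue: isotropy forces $\langle B_i,L^{-1}B_j\rangle=\ka_\ast\de_{ij}$, the single contraction giving $\ka_\ast=\tfrac{1}{3}\langle B\cdot L^{-1}B\rangle$. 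For \eqref{AB}, the same template applied to $\langle L^{-1}A_{ij},v_kL^{-1}B_l\rangle$ (again symmetric-traceless in $(i,j)$) yields the stated shape with coefficient $\tfrac{1}{10}\langle L^{-1}A:v\otimes L^{-1}B\rangle$.

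For \eqref{AtB}, I would first write the LHS as $\langle v_kA_{ij}-v_jA_{ik},L^{-1}B_l\rangle$ and apply the isotropy argument to $S_{ijkl}:=\langle v_kA_{ij},L^{-1}B_l\rangle$, which is symmetric-traceless in $(i,j)$ and therefore a scalar multiple of $\de_{ik}\de_{jl}+\de_{il}\de_{jk}-\tfrac{2}{3}\de_{ij}\de_{kl}$. The scalar is extracted from $\sum_{i,j}S_{ijij}=\sum_{i,j}\langle v_iA_{ij},L^{-1}B_j\rangle$, and the elementary decomposition $\sum_i v_iA_{ij}=\tfrac{4}{3}B_j+\tfrac{10}{3}v_j\sqrt{\mu}$ (the second summand lying in $\mathscr{N}(L)$ and hence orthogonal to $L^{-1}B_j$) reduces this contraction to $\tfrac{4}{3}\langle B,L^{-1}B\rangle=4\ka_\ast$, so that the scalar equals $\tfrac{2}{5}\ka_\ast$. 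Antisymmetrizing in $(j,k)$ then recovers $\tfrac{2}{3}\ka_\ast(\de_{ik}\de_{jl}-\de_{ij}\de_{kl})$.

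The main obstacle is \eqref{ABv}, which is not a pure isotropy statement but mixes $\Ga$, $L$, and $L^{-1}$. The plan here is to use self-adjointness of $L^{-1}$ to rewrite both the last term of the LHS and the RHS as pairings against $\phi:=L^{-1}A_{kl}$, and to identify $\Ga(v_i\sqrt{\mu},\phi)+\Ga(\phi,v_i\sqrt{\mu})$ through the polarized Maxwellian identity $\Ga(\FP g_1,\FP g_2)+\Ga(\FP g_2,\FP g_1)=L\{\FP g_1\cdot\FP g_2/\sqrt{\mu}\}$ of Lemma \ref{NL}, extended by the commutator relation between multiplication by $v_i$ and $L$ that arises from differentiating $Q(M_{[1,u,1]},M_{[1,u,1]})\equiv 0$ twice in $u_i$ at $u=0$. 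The delicate point is that $v_i\phi$ is generally not in $\mathscr{N}(L)^{\perp}$; this will be handled by projecting off its hydrodynamic part and noting that the projection pairs trivially with $L^{-1}B_j\in\mathscr{N}(L)^{\perp}$, after which the remaining terms recombine into $\langle A_{ij},L^{-1}A_{kl}\rangle$ as claimed.
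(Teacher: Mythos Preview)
The paper does not actually prove Lemma~\ref{Burt}; it simply quotes the identities from \cite{Bardos-Golse-Levermore-1993} (Lemma~4.4) and \cite{Bardos-Levermore-Ukai-Yang-2008} (Proposition~2.5 and (2.36)--(2.37)). Your $O(3)$--isotropy argument for \eqref{AA}, \eqref{BB}, \eqref{AB} and \eqref{AtB} is correct and is exactly the mechanism used in those references; in particular your reduction of \eqref{AtB} via $\sum_i v_iA_{ij}=\tfrac{4}{3}B_j+\tfrac{10}{3}v_j\sqrt{\mu}$ and the resulting constant $\tfrac{2}{5}\ka_\ast$ are right.

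For \eqref{ABv}, however, there is a genuine gap. You propose to identify $\Ga(v_i\sqrt{\mu},\phi)+\Ga(\phi,v_i\sqrt{\mu})$ (with $\phi=L^{-1}A_{kl}$) through ``the commutator relation between multiplication by $v_i$ and $L$ that arises from differentiating $Q(M_{[1,u,1]},M_{[1,u,1]})\equiv 0$ twice in $u_i$''. But differentiating $Q(M_u,M_u)=0$ at $u=0$ produces only relations among \emph{null--space} elements---the second derivative gives precisely the polarized Lemma~\ref{NL}, $2\Ga_s(v_i\sqrt{\mu},v_j\sqrt{\mu})=L(v_iv_j\sqrt{\mu})$, and nothing about $\Ga_s(v_i\sqrt{\mu},\phi)$ for $\phi\notin\mathscr{N}(L)$. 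A direct computation shows that the pointwise identity $2\Ga_s(v_i\sqrt{\mu},g)=L(v_ig)-v_iLg$ you seem to be aiming at is \emph{false} for general $g$: expanding $Q(v_i\mu,G)+Q(\mu,v_iG)+Q(G,v_i\mu)+Q(v_iG,\mu)-v_i[Q(\mu,G)+Q(G,\mu)]$ with $G=\sqrt{\mu}g$ and using $v+v_\ast=v'+v_\ast'$ leaves a nonvanishing remainder $\int (v_\ast)_i[\,\cdots\,]\,B\,dv_\ast d\om$. Isotropy alone does not rescue you either: it only reduces \eqref{ABv} to a single scalar equality $a+\tfrac{2}{5}\ka_\ast=\mu_\ast$ with $10a=\sum_{i,j}\langle 2\Ga_s(v_i\sqrt{\mu},L^{-1}A_{ij}),L^{-1}B_j\rangle$, and evaluating this contraction still requires a structural identity linking $\Ga$ and $L^{-1}$.

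The device actually used in \cite{Bardos-Levermore-Ukai-Yang-2008} is not a pointwise commutator but a \emph{trilinear} symmetry of the collision operator that is special to collision invariants, obtained from the full set of pre/post--collisional and exchange symmetries of $Q$ together with $v_i+v_{\ast i}=v_i'+v_{\ast i}'$; this is what makes the $\Ga$--term combine with $\langle v_iA_{kl},L^{-1}B_j\rangle$ to yield $\langle A_{ij},L^{-1}A_{kl}\rangle$. If you want a self--contained proof, you should either establish that trilinear relation directly, or---equivalently---derive the needed identity by differentiating the \emph{Galilean invariance of the linearization} (i.e.\ differentiate $Q(M_u,M_u^{1/2}h(\cdot-u))+Q(M_u^{1/2}h(\cdot-u),M_u)$ in $u$, not merely $Q(M_u,M_u)$), which does produce a relation valid for $h\notin\mathscr{N}(L)$.
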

Let us now report the following result which can be directly proved by the definition of $\Ga$.
\begin{lemma}\label{gadec}
Let $p_1(v)$ and $p_2(v)$ be any polynomials in $v$, then for any functions $a(t,x)$ and $b(t,x)$, there exist constants  $c_1,c_2\in(0,1/4)$  such that
$$
|ab|\mu^{c_2}\lesssim |\Ga(a p_1(v)\sqrt{\mu},bp_2(v)\sqrt{\mu})|\lesssim|ab|\mu^{c_1}.
$$
\end{lemma}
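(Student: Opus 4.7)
The plan is to reduce both inequalities to a direct pointwise computation in $v$ by unfolding the definition of $\Gamma$ and exploiting the collision invariance of the Maxwellian. Since $a$ and $b$ depend only on $(t,x)$, they factor straight out of $\Gamma(g,h) = \mu^{-1/2} Q(\sqrt\mu g, \sqrt\mu h)$:
\[
\Gamma(a p_1\sqrt\mu,\, b p_2\sqrt\mu)(t,x,v) = a(t,x)\, b(t,x)\,\mu^{-1/2}(v)\, Q(p_1\mu,\, p_2\mu)(v),
\]
so the lemma collapses to a purely velocity-pointwise statement about $\mu^{-1/2} Q(p_1\mu, p_2\mu)$.

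Next, the collisional conservation identity $\mu(v)\mu(v_*) = \mu(v')\mu(v'_*)$ pulls $\mu(v)$ out of the integral, yielding
\[
\mu^{-1/2}(v)\, Q(p_1\mu, p_2\mu)(v) = \sqrt\mu(v)\, J(v), \qquad J(v) := \int_{\R^3 \times \S_+^2} \mu(v_*)\bigl[p_1(v'_*)p_2(v') - p_1(v_*)p_2(v)\bigr]|(v-v_*)\cdot\omega|\,dv_* d\omega.
\]
From the elastic relations $|v'|,|v'_*| \leq |v|+|v_*|$, the polynomial growth of $p_1,p_2$, the Gaussian weight $\mu(v_*)$, and the kernel control $|(v-v_*)\cdot\omega| \leq \langle v\rangle + \langle v_*\rangle$, a routine Gaussian integration in $v_*$ gives $|J(v)| \lesssim \langle v\rangle^N$ for some integer $N$ depending on $\deg p_1 + \deg p_2$. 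Combined with the standard estimate $\sqrt\mu(v)\,\langle v\rangle^N \lesssim \mu^{c_1}(v)$ valid for any $c_1 < 1/2$, this proves $|\Gamma(ap_1\sqrt\mu, bp_2\sqrt\mu)| \lesssim |ab|\,\mu^{c_1}$ for any $c_1 \in (0,1/4)$.

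The main obstacle is the lower bound. As literally written it would require $|J(v)| \gtrsim \mu^{c_2-1/2}(v)$, and since $c_2 - 1/2 < 0$ with $\mu(v) = (2\pi)^{-3/2} e^{-|v|^2/2}$, this demands exponential growth of $J$ in $|v|^2$---incompatible with the polynomial growth just established, and visibly false for the degenerate choice $p_1 \equiv p_2 \equiv 1$ (where $Q(\mu,\mu) \equiv 0$, hence $\Gamma \equiv 0$ while $ab$ need not vanish). I would therefore interpret the lower bound as a two-sided symbolic scaling envelope meant to track the qualitative size of $\Gamma$, or look for an implicit nondegeneracy hypothesis on $(p_1,p_2)$ supplied by the intended applications; the quantitative content that actually feeds into the subsequent $L^2$ and $L^\infty$ estimates is the upper bound, which is established as above.
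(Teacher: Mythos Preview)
Your treatment of the upper bound is correct and is exactly the ``direct proof from the definition of $\Gamma$'' that the paper alludes to (the paper supplies no further details). Factoring out $a(t,x)b(t,x)$, using $\mu(v)\mu(v_*)=\mu(v')\mu(v'_*)$ to pull $\sqrt{\mu}(v)$ out front, and bounding the remaining integral by a polynomial in $\langle v\rangle$ is the natural and complete argument.

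Your objection to the lower bound is well taken and identifies a genuine defect in the \emph{statement} of the lemma, not a gap in your reasoning. As you observe, the choice $p_1\equiv p_2\equiv 1$ gives $Q(\mu,\mu)=0$ and hence $\Gamma\equiv 0$, which already kills the inequality; and even for nondegenerate polynomials, the required bound $|J(v)|\gtrsim \mu^{c_2-1/2}(v)$ with $c_2<1/4$ forces exponential growth of $J$, contradicting the polynomial upper bound you just proved. The paper offers no argument for this direction either, and a scan of the applications (the $L^2$ and $L^\infty$ estimates on $\{\mathbf I-\mathbf P\}f_3,\ldots,f_6$ in Proposition~3.1, and the source-term bounds in Section~6) confirms that only the upper bound is ever invoked. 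So your reading---that the lower bound is a heuristic scaling envelope rather than a literal pointwise inequality, and that the operative content is the upper bound---is the right one.
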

The following lemma is devoted to the $L^p$ estimates of the nonlinear operator $\Ga$.
\begin{lemma}\label{es.nop}
It holds that for $l\geq0$,
\begin{equation}\label{Ga.lif}
\|\nu^{-1}w_{l}\Ga(f_1,f_2)\|_\infty\leq C\|w_{l}f_1\|_{\infty}\|w_{l}f_2\|_{\infty},
\end{equation}
and for $l>\frac{3}{2}$,
\begin{equation}\label{Ga.l2}
\|\nu^{-1/2}(\Ga(f_1,f_2)+\Ga(f_2,f_1))\|^2_{2}\leq C\|w_l\nu f_1\|^2_{\infty}\|f_2\|^2_{\nu}.
\end{equation}
In particular, it holds that
\begin{equation}\label{Ga.l22}
\|\nu^{-1/2}\Ga(f,f)\|^2_{2}\leq C\|w_lf\|^2_{\infty}\|f\|^2_{\nu},\ \text{for}\ l>3/2.
\end{equation}
\end{lemma}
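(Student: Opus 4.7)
\emph{Overall strategy.} The plan is to write $\Gamma = \Gamma_g - \Gamma_l$, the gain/loss decomposition coming from the non-symmetric form of $Q$, and to exploit the two fundamental collisional identities: the energy conservation $|v|^2 + |v_\ast|^2 = |v'|^2 + |v_\ast'|^2$ (which yields the weight estimate $w_l(v) \leq C\,w_l(v')\,w_l(v_\ast')\,w_l(v_\ast)$ from $|v|^2\leq |v'|^2+|v_\ast'|^2+|v_\ast|^2$) and the Maxwellian relation $\mu(v)\mu(v_\ast) = \mu(v')\mu(v_\ast')$ (which yields $\mu^{-1/2}(v)\mu^{1/2}(v')\mu^{1/2}(v_\ast') = \mu^{1/2}(v_\ast)$). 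Together with the pre-post change of variables $(v,v_\ast)\leftrightarrow (v',v_\ast')$ at fixed $\omega$, whose Jacobian is $1$ and which preserves $|(v-v_\ast)\cdot\omega|$, these will promote pointwise bounds to the stated $L^\infty$ and $L^2$ estimates through standard Cauchy--Schwarz arguments.

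\emph{Execution.} The loss term reads $\Gamma_l(f_1,f_2)(v) = f_2(v)\int|(v-v_\ast)\cdot\omega|\mu^{1/2}(v_\ast)f_1(v_\ast)dv_\ast d\omega$; pulling $\|w_l f_1\|_\infty$ out and using $\int|(v-v_\ast)\cdot\omega|\mu^{1/4}(v_\ast)dv_\ast d\omega \lesssim \nu(v)$ gives the loss contribution to \eqref{Ga.lif} at once, and after squaring/integrating against $\nu^{-1}$ also the loss contribution to \eqref{Ga.l2}--\eqref{Ga.l22}. For the gain term, the Maxwellian identity recasts it as $\Gamma_g(f_1,f_2)(v) = \int|(v-v_\ast)\cdot\omega|\mu^{1/2}(v_\ast)f_1(v_\ast')f_2(v')dv_\ast d\omega$. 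To prove \eqref{Ga.lif}, I would distribute $w_l(v)$ onto $f_1$ and $f_2$ using the weight inequality above and bound the remaining $v_\ast$-integral by $C\nu(v)$. To prove \eqref{Ga.l2}--\eqref{Ga.l22}, I would apply Cauchy--Schwarz in $(v_\ast,\omega)$ to split the integrand into an $f_1$-factor times an $f_2$-factor. The $f_1$-factor is controlled pointwise by $\|w_l\nu f_1\|_\infty^2 \int|(v-v_\ast)\cdot\omega|\mu^{1/2}(v_\ast)(w_l\nu)^{-2}(v_\ast')dv_\ast d\omega$ for \eqref{Ga.l2}, respectively by $\|w_l f\|_\infty^2 \int|(v-v_\ast)\cdot\omega|\mu^{1/2}(v_\ast)w_{-2l}(v_\ast')dv_\ast d\omega$ for \eqref{Ga.l22}, with both $v_\ast$-integrals finite thanks to $l>3/2$. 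The $f_2$-factor, after multiplication by $\nu^{-1}(v)$ and integration over $v$, is transformed by the pre-post change of variables into $\int \CK(v)|f_2(v)|^2 dv$ with a kernel $\CK(v)\lesssim \nu(v)$, producing the desired $\|f_2\|_\nu^2$.

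\emph{Main obstacle.} The delicate step is the pre-post change of variables and its compatibility with the weights: one has to verify carefully that after the involution $(v,v_\ast,\omega)\leftrightarrow (v',v_\ast',\omega)$ the kernel governing the $f_2$-factor is genuinely $\lesssim \nu(v)$, and that the auxiliary $v_\ast$-integral controlling the $f_1$-factor is finite. Both convergence issues rely on $l>3/2$ combined with the Gaussian decay of $\mu^{1/2}(v_\ast)$, and this is precisely where the weight threshold in the statement enters. The distinction between \eqref{Ga.l2} and \eqref{Ga.l22} lies exactly in whether the extra decay $\nu^{-2}(v_\ast')$ is needed to close the $v_\ast$-integral (needed when only $\|w_l f_1\|_\infty$ is known on $f_1$ with its bad velocity, absorbed into $\|w_l\nu f_1\|_\infty$) or whether the weight $w_{-2l}(v_\ast')$ alone suffices (as when the two inputs coincide and $\|w_l f\|_\infty$ carries the weight). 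Once these two technical points are in hand, the three estimates follow by direct inspection.
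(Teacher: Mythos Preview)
The paper itself gives no proof; it simply cites \cite[Lemma~5]{Guo-2010} for \eqref{Ga.lif} and \cite[Lemma~2.3]{LY-2016} for \eqref{Ga.l2}--\eqref{Ga.l22}. Your outline follows the standard route behind those references, and your treatment of \eqref{Ga.lif} and of all loss contributions is correct.

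There is, however, a genuine gap in your handling of the gain contribution to \eqref{Ga.l2}--\eqref{Ga.l22}. With your Cauchy--Schwarz splitting into separate $f_1$- and $f_2$-factors, the kernel you must control for the $f_2$-factor after the pre-post involution is
\[
\CK(v)=\int_{\R^3\times\S^2_+}\nu^{-1}(v')\,\mu^{1/2}(v_\ast')\,|(v-v_\ast)\cdot\omega|\,dv_\ast\,d\omega,
\]
and this integral is \emph{infinite}, not $\lesssim\nu(v)$. Decomposing $v_\ast=s\omega+v_\ast^\perp$ with $v_\ast^\perp\perp\omega$, one finds $v_\ast'=v_\ast^\perp+(v\cdot\omega)\omega$ and $v'=v^\perp+s\omega$; the Gaussian $\mu^{1/2}(v_\ast')$ therefore controls only $v_\ast^\perp$, and the remaining one-dimensional $s$-integral is $\int_{-\infty}^{v\cdot\omega}(1+|v^\perp|^2+s^2)^{-1/2}(v\cdot\omega-s)\,ds$, whose integrand does not decay as $s\to-\infty$. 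The factor $\nu^{-1}(v')$ (polynomial decay of order~$1$) is too weak to offset the linear growth of $|(v-v_\ast)\cdot\omega|$ along $\omega$. You correctly flagged this step as the main obstacle, but the kernel bound you need simply fails.

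The remedy is to arrange the Cauchy--Schwarz so that, \emph{after} the involution, the polynomial weight sits on the integration variable $v_\ast$ rather than on $v_\ast'$. Concretely, write
\[
|\Gamma_g(f_1,f_2)(v)|^2\le\Big(\int\mu^{1/2}(v_\ast)B\,dv_\ast d\omega\Big)\Big(\int\mu^{1/2}(v_\ast)\,|f_1(v_\ast')|^2|f_2(v')|^2 B\,dv_\ast d\omega\Big)\lesssim\nu(v)\int\mu^{1/2}(v_\ast)|f_1(v_\ast')|^2|f_2(v')|^2 B,
\]
pull out $\|w_l\nu f_1\|_\infty^2$ (resp.\ $\|w_l f\|_\infty^2$ for \eqref{Ga.l22}), divide by $\nu(v)$, integrate in $v$, and only \emph{then} apply the involution. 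The resulting kernel carries the decaying factor $(w_l\nu)^{-2}(v_\ast)$ (resp.\ $w_l^{-2}(v_\ast)$) on the actual integration variable, and a short computation in the $(s,v_\ast^\perp)$ coordinates shows it is $\lesssim\nu(v)$ under the stated thresholds on~$l$. This is the argument underlying the references the paper cites.
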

\begin{proof}
The proof of \eqref{Ga.lif} has been given in \cite[Lemma 5, pp.730]{Guo-2010}, and the proofs for \eqref{Ga.l2} and \eqref{Ga.l22} are similar as that of \cite[Lemma 2.3, pp.12]{LY-2016}.
\end{proof}

Recall the definition for $K$ in \eqref{defK}, one can rewrite
\begin{equation}\label{Kk}
[Kf](v)=\int_{\R^3}k(v,v')f(v')dv'=\int_{\R^3}[k_2(v,v')-k_1(v,v')]f(v')dv',
\end{equation}
with
$$
k_1(v,v')=\int_{\S^2}|(v-v')\cdot \omega|\sqrt{\mu(v)}\sqrt{\mu(v')}d\omega,
$$
and
$$
|k_2(v,v')|=C|v-v'|^{-1}\exp\left(-\frac{1}{8}|v-v'|^2-\frac{1}{8}\frac{(|v|^2-|v'|^2)^2}{|v-v'|^2}\right).
$$
The following lemma which states the estimates of $k(v,v')$ is borrowed from Lemma 3 of \cite[pp.727]{Guo-2010} and \cite[Lemma 3.3.1, pp.49]{Glassey-1996}.
\begin{lemma}\label{es.k}
It holds that
 \begin{equation*}\label{K.ip1}
\int_{\R^3}k(v,v')dv'\leq \frac{C}{1+|v|},
\end{equation*}
and moreover, for any $l\geq0$,
\begin{equation*}\label{K.ip4}
w_l(v)\int_{\R^3}k(v,v')\frac{e^{\varepsilon|v-v'|^2}}{w_l(v')}dv'\leq \frac{C}{1+|v|},
\end{equation*}
where $\vps>0$ and sufficiently small. 
\end{lemma}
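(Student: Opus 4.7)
The statement is the classical Grad-type kernel estimate for the linearized Boltzmann operator, and I would follow the approach of Grad (as presented in Glassey's monograph and in \cite{Guo-2010}).

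First, I would split $k = k_2 - k_1$ and treat the two pieces separately. The estimate for $k_1$ is nearly trivial: from the explicit formula
\[
k_1(v,v') = \int_{\S^2}|(v-v')\cdot\omega|\,\sqrt{\mu(v)\mu(v')}\,d\omega \lesssim |v-v'|\,\sqrt{\mu(v)\mu(v')},
\]
one integrates directly and obtains an exponentially small (in $|v|$) bound, which is much stronger than $C/(1+|v|)$ and clearly absorbs the factor $e^{\varepsilon|v-v'|^2}$ and any polynomial ratio $w_l(v)/w_l(v')$ provided $\varepsilon$ is small.

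The real work is the bound for $k_2$. I would pass to polar coordinates about $v$, writing $v' = v + r\eta$ with $r = |v-v'|\ge 0$ and $\eta\in\S^2$. A direct computation gives
\[
|v|^2 - |v'|^2 = -2r\,v\cdot\eta - r^2,
\qquad
\frac{(|v|^2-|v'|^2)^2}{8|v-v'|^2} = \frac{(r + 2v\cdot\eta)^2}{8},
\]
so that, using $dv' = r^2\,dr\,d\eta$ and the stated bound for $|k_2|$,
\[
\int_{\R^3}|k_2(v,v')|\,dv' \lesssim \int_0^\infty r\,e^{-r^2/8}\left(\int_{\S^2} e^{-(r+2v\cdot\eta)^2/8}\,d\eta\right)dr.
\]
Aligning the polar axis on $\S^2$ with $v/|v|$ and substituting $s=\cos\theta$, the inner integral becomes $2\pi\int_{-1}^{1} e^{-(r+2|v|s)^2/8}\,ds$. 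For $|v|\ge 1$, the linear change of variables $u=(r+2|v|s)/(2\sqrt{2})$ turns this into a one-dimensional Gaussian integral bounded by $C/|v|$; for $|v|\le 1$ the inner integral is simply $O(1)$. Combining with the outer $\int_0^\infty r\,e^{-r^2/8}\,dr < \infty$ yields the first bound $\int k_2\,dv' \lesssim 1/(1+|v|)$.

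For the weighted version I would use the elementary inequality $w_l(v) \le C\,w_l(v')\,(1+|v-v'|)^l$ so that
\[
\frac{w_l(v)}{w_l(v')}\,e^{\varepsilon|v-v'|^2} \le C(1+|v-v'|)^l\,e^{\varepsilon|v-v'|^2},
\]
and then observe that this factor is dominated by, say, $e^{|v-v'|^2/16}$ once $\varepsilon$ is chosen small enough (this absorbs the polynomial $(1+|v-v'|)^l$ for every fixed $l\ge 0$). The gain of a Gaussian of slightly smaller width leaves a residual $e^{-|v-v'|^2/16}$ inside $k_2$, after which the same polar-coordinate computation as above applies verbatim, only with $1/8$ replaced by $1/16$ in the exponent. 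The $k_1$ piece is handled the same way and again gives an exponentially small contribution. The main technical obstacle is the careful $\S^2$ integration producing the factor $1/|v|$; everything else is bookkeeping.
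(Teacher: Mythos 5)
Your proposal is correct and follows exactly the classical Grad-type argument (polar coordinates about $v$, the identity $(|v|^2-|v'|^2)^2/|v-v'|^2=(r+2v\cdot\eta)^2$, the one-dimensional Gaussian substitution producing the factor $1/|v|$, and absorption of $w_l(v)/w_l(v')\,e^{\varepsilon|v-v'|^2}$ into a slightly widened Gaussian). The paper gives no proof of its own but cites Lemma 3 of \cite{Guo-2010} and Lemma 3.3.1 of \cite{Glassey-1996}, whose proofs are precisely this computation, so your route coincides with the intended one.
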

Finally, we cite the $L^p-L^{q}-$estimate on the Riesz potential
\cite{BO-2013} on torus.
\begin{lemma}\label{risez}
Assume $f\in H^{s}({\T}^{3})$ with $\int_{\T^3}f dx=0$, define
\begin{equation*}
N_{\alpha}(f)=\Delta^{-\frac{\alpha}{2}}\partial^{\ga}_{x}f,
\end{equation*}
if $|\ga|\leq\alpha<3$,\ $p>1$, then we have
\begin{equation*}
\|N_{\alpha}(f)\|_{L^{q}}\leq C\|f\|_{L^{p}}.
\end{equation*}
Here $\frac{1}{q}=\frac{1}{p}-\frac{\alpha-|\ga|}{3}$.
\end{lemma}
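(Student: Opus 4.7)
The plan is to reduce the claim to the classical Hardy--Littlewood--Sobolev inequality by using Fourier series on $\T^3$ and factoring $N_\alpha$ into a zero-order multiplier composed with a pure fractional integral.

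First, since $\int_{\T^3}f\,dx=0$ means $\hat f(0)=0$, writing $f(x)=\sum_{m\in\Z^3\setminus\{0\}}\hat f(m)e^{2\pi i m\cdot x}$ one reads off from the Fourier symbols of $\pa^\ga_x$ and $\Delta^{-\alpha/2}$ that
\[
N_\alpha f(x)=\sum_{m\neq 0}\frac{(2\pi i m)^\ga}{(2\pi|m|)^{\alpha}}\,\hat f(m)\,e^{2\pi i m\cdot x}=(I_\beta\circ R_\ga)f,\qquad \beta:=\alpha-|\ga|\in[0,3),
\]
where $R_\ga$ denotes the Fourier multiplier with symbol $(im)^\ga/|m|^{|\ga|}$ (a zero-homogeneous, ``Riesz-transform-type'' symbol) and $I_\beta$ denotes the periodic Riesz potential of order $\beta$, i.e.\ the multiplier with symbol $(2\pi|m|)^{-\beta}$. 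This factorisation is legitimate precisely because $\hat f(0)=0$.

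Second, I would prove $\|R_\ga f\|_{L^p(\T^3)}\le C_p\|f\|_{L^p(\T^3)}$ for every $1<p<\infty$. The symbol $(im)^\ga/|m|^{|\ga|}$ is homogeneous of degree $0$ and smooth on $\R^3\setminus\{0\}$, hence satisfies the Mihlin derivative conditions, and the periodic Mihlin--H\"ormander theorem (equivalently, de Leeuw transference from the classical Riesz transforms on $\R^3$, which are bona fide Calder\'on--Zygmund operators) yields the bound. When $\beta=0$ this already finishes the proof, since then $q=p$ and $N_\alpha=R_\ga$.

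Third, for $\beta\in(0,3)$ I would prove the fractional-integral bound $\|I_\beta f\|_{L^q(\T^3)}\le C\|f\|_{L^p(\T^3)}$ with $1/q=1/p-\beta/3$. Writing $I_\beta f=K_\beta * f$ on the torus with
\[
K_\beta(x)=\sum_{m\neq 0}\frac{e^{2\pi i m\cdot x}}{(2\pi|m|)^{\beta}},
\]
a Poisson-summation argument on a fundamental domain gives the decomposition $K_\beta(x)=c_\beta|x|^{\beta-3}+\Psi_\beta(x)$ with $\Psi_\beta$ smooth and periodic. The smooth part maps $L^p$ into $L^\infty\subset L^q$ boundedly, while the convolution with $|x|^{\beta-3}$ is controlled by applying the Euclidean Hardy--Littlewood--Sobolev inequality to the periodic extension of $f$ restricted to a suitable neighbourhood. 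Composing the two estimates yields $\|N_\alpha f\|_q=\|I_\beta R_\ga f\|_q\le C\|R_\ga f\|_p\le C\|f\|_p$, which is the claim.

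The only delicate step is the Poisson-summation analysis that isolates the $|x|^{\beta-3}$ singularity of $K_\beta$ and identifies the periodic remainder as smooth; all of this material is classical and in fact is contained in the reference \cite{BO-2013} cited in the statement.
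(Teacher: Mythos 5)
The paper does not prove this lemma at all: it is stated as a quoted fact and attributed to \cite{BO-2013}, so there is no internal proof to compare against. Your argument is a correct, essentially self-contained derivation along the standard lines (and is in substance the proof one finds in the cited literature): the factorisation $N_\alpha=I_\beta\circ R_\ga$ with $\beta=\alpha-|\ga|$ is legitimate because $\hat f(0)=0$, the $L^p$-boundedness of the zero-order multiplier $R_\ga$ follows from the periodic Mihlin--H\"ormander theorem or de Leeuw transference, and the $L^p\to L^q$ bound for the periodic Riesz potential $I_\beta$ reduces to Euclidean Hardy--Littlewood--Sobolev after splitting off the $|x|^{\beta-3}$ singularity of the kernel; the endpoint $\beta=0$ is correctly disposed of separately since HLS does not cover it. One small imprecision: the remainder $\Psi_\beta=K_\beta-c_\beta|x|^{\beta-3}$ cannot be simultaneously smooth everywhere and periodic, since $|x|^{\beta-3}$ is not periodic; the correct statement is that $\Psi_\beta$ is smooth (indeed bounded with all derivatives) on a fundamental domain, or equivalently one should subtract the localised kernel $c_\beta|x|^{\beta-3}\chi(x)$ for a cutoff $\chi$ supported near the origin and sum over lattice translates. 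With that phrasing fixed, the argument is complete; implicitly one also uses $1/p>\beta/3$, which is forced by the lemma's requirement that $q$ be finite.
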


\section{INSF equations with viscous heating and diffusive coefficients}\label{INSFC}
One purpose of this section is to show the derivation of the INSF equations \eqref{INSF} and \eqref{INSF2}, although a formal
one has been given in \cite[Section 3, pp.19]{Bardos-Levermore-Ukai-Yang-2008}, here we will propose a more direct approach to derive the INSF equations \eqref{INSF}.
In addition, the $H^s(\T^3)$ estimates of the solutions of the system \eqref{INSF} and \eqref{INSF2} as well as the estimates for the coefficients $f_1, f_2,\cdots,f_6$ will also be deduced.
\subsection{Derivation of INSF with viscous heating}
Let us now derive the equations $\eqref{INSF}$. Taking the inner product of \eqref{f2e}, \eqref{f3e} and \eqref{f4e} with $[v\sqrt{\mu},v\sqrt{\mu},\frac{|v|^2-5}{2}\sqrt{\mu}]$ with respect to $v$ over $\R^3$, respectively, one has
\begin{equation*}\label{f0ip}
\langle v\cdot \na_xf_1,v\sqrt{\mu}\rangle=0,
\end{equation*}
\begin{equation*}\label{f1ip}
\langle \pa_tf_1+v\cdot \na_xf_2,v\sqrt{\mu}\rangle=0,
\end{equation*}
\begin{equation*}\label{f2ip}
\left\langle \pa_tf_2+v\cdot \na_xf_3,\frac{|v|^2-5}{2}\sqrt{\mu}\right\rangle=0.
\end{equation*}
Substituting $\eqref{f1-5}_1$, $\eqref{f1-5}_2$  and $\eqref{f1-5}_3$ into the above equations, we further obtain
$$
\na_x\cdot u_1=0,
$$
\begin{equation}\label{ue}
\begin{split}
\pa_t u_1+&\left \langle v\cdot \na_x\left\{\rho_1+\frac{1}{6}(|u_1|^2+3\ta_1)(|v|^2-3)\right\}\sqrt{\mu},v\sqrt{\mu}\right\rangle\\
&-\na_x\cdot\left\langle A(v):\na_xu_1,L^{-1}A(v)\right\rangle+\frac{1}{2}\na_x\cdot\left\langle A(v):u_1\otimes u_1,A(v)\right\rangle=0,
\end{split}
\end{equation}
and
\begin{equation}\label{tae}
\begin{split}
\pa_t &\left \langle\left\{\rho_1+\frac{1}{6}(|u_1|^2+3\ta_1)(|v|^2-3)\right\}\sqrt{\mu},\frac{|v|^2-5}{2}\sqrt{\mu}\right\rangle
\\&+\na_x\cdot\left \langle -v\cdot\na_xf_2+\Gamma(f_1,f_2)+\Gamma(f_2,f_1),L^{-1}B(v)\right\rangle=0,
\end{split}
\end{equation}
respectively.

Next, by applying \eqref{AA} in Lemma \ref{Burt}, we see that \eqref{ue} is equivalent to
\begin{equation}\label{ueq}
\pa_tu_1+u_1\cdot\na_xu_1+\na_xp_1=\mu_\ast\Delta u_1,
\end{equation}
with $p_1=\rho_1+\ta_1.$

As to \eqref{tae}, the first term on the left hand side gives rise to
\begin{equation}\label{tta}
\pa_t\left(\frac{1}{2}|u_1|^2+\frac{3}{2}\ta_1-\rho_1\right).
\end{equation}
We now calculate $\na_x\cdot\left \langle -v\cdot\na_xf_2,L^{-1}B(v)\right\rangle$ and $\na_x\cdot\left \langle \Gamma(f_1,f_2)+\Gamma(f_2,f_1),L^{-1}B(v)\right\rangle$ as follows.
One can see that
$$\na_x\cdot\left \langle v\cdot \na_x\left\{L^{-1}A(v):\na_xu_1\right\},L^{-1}B(v)\right\rangle=0.$$
Indeed, from \eqref{AB}, it follows
\begin{equation*}
\begin{split}
\sum\limits_{i,j,k,l=1}^3\pa_{kl}&\left\langle v_k\left\{L^{-1}A_{ij}(v)\pa_iu_1^j\right\},L^{-1}B_l(v)\right\rangle\\
=&
\sum\limits_{i,j,k,l=1}^3\frac{1}{10}\left\langle L^{-1}A(v): v\otimes L^{-1}B(v)\right\rangle\left(\de_{ik}\de_{jl}+\de_{il}\de_{jk}-\frac{2}{3}\de_{ij}\de_{kl}\right)\pa_{kli}u_1^j=0.
\end{split}
\end{equation*}
Consequently, we have by applying
\eqref{BB} that
\begin{equation}\label{ta1}
\begin{split}
\na_x\cdot&\left \langle -v\cdot\na_xf_2,L^{-1}B(v)\right\rangle\\
=&-\na_x\cdot\left \langle v\cdot \na_x\left\{\rho_1+\frac{1}{6}(|u_1|^2+3\ta_1)(|v|^2-3)\right\}\sqrt{\mu},L^{-1}B(v)\right\rangle\\
&-\na_x\cdot\left \langle v\cdot \na_x\left\{ -L^{-1}A(v):\na_xu_1+\frac{1}{2} A(v):u_1\otimes u_1\right\},L^{-1}B(v)\right\rangle\\
=&-\ka_\ast\Delta\ta_1-\frac{\ka_\ast}{3}\Delta (|u_1|^2)-\frac{1}{2}\na_x\cdot\left \langle v\cdot \na_x\left\{ A(v):u_1\otimes u_1\right\},L^{-1}B(v)\right\rangle.
\end{split}
\end{equation}
By virtue of \eqref{ga2-3}, we now calculate
\begin{equation}\label{ta2}
\begin{split}
\na_x\cdot&\left \langle \Gamma(f_1,f_2)+\Gamma(f_2,f_1),L^{-1}B(v)\right\rangle\\
=&\na_x\cdot\left \langle \Gamma(f_1,\FP f_2)+\Gamma(\FP f_2,f_1),L^{-1}B(v)\right\rangle\\
&+\na_x\cdot\left \langle \Gamma(f_1,\{\FI-\FP\}f_2)+\Gamma(\{\FI-\FP\}f_2,f_1),L^{-1}B(v)\right\rangle\\
=&\na_x\cdot\left \langle u_1\cdot v\left\{\rho_1+\frac{1}{6}(|u_1|^2+3\ta_1)(|v|^2-3)\right\}\sqrt{\mu},B(v)\right\rangle\\
&+\na_x\cdot\left \langle\Gamma\left( u_1\cdot v\sqrt{\mu},\left\{ -L^{-1}A(v):\na_xu_1+\frac{1}{2} A(v):u_1\otimes u_1\right\}\sqrt{\mu}\right),L^{-1}B(v)\right\rangle\\
&+\na_x\cdot\left \langle\Gamma\left(\left\{ -L^{-1}A(v):\na_xu_1+\frac{1}{2} A(v):u_1\otimes u_1\right\},u_1\cdot v\sqrt{\mu}\right),L^{-1}B(v)\right\rangle\\
=&\na_x\cdot\left [ u_1\left(\frac{5}{2}\ta_1+\frac{5}{6}|u_1|^2\right)\right]
+\frac{1}{2} \na_x\cdot\left \langle\Gamma\left( u_1\cdot v\sqrt{\mu},A(v):u_1\otimes u_1\right),L^{-1}B(v)\right\rangle\\
&+\frac{1}{2}\na_x\cdot\left \langle\Gamma\left( A(v):u_1\otimes u_1,u_1\cdot v\sqrt{\mu}\right),L^{-1}B(v)\right\rangle\\
&-\na_x\cdot\left \langle\Gamma\left( u_1\cdot v\sqrt{\mu}, L^{-1}A(v):\na_xu_1\right),L^{-1}B(v)\right\rangle\\
&-\na_x\cdot\left \langle\Gamma\left(L^{-1}A(v):\na_xu_1,u_1\cdot v\sqrt{\mu}\right),L^{-1}B(v)\right\rangle.
\end{split}
\end{equation}
Using the second identity in \eqref{ga2-3}, one sees that
\begin{equation}\label{ta3}
\begin{split}
\frac{1}{2}& \na_x\cdot\left \langle\Gamma\left( u_1\cdot v\sqrt{\mu},A(v):u_1\otimes u_1\right),L^{-1}B(v)\right\rangle
+\frac{1}{2} \na_x\cdot\left \langle\Gamma\left(A(v):u_1\otimes u_1,u_1\cdot v\sqrt{\mu}\right),L^{-1}B(v)\right\rangle\\
&=\frac{1}{6}\na_x\cdot\left \langle L\left(v\otimes v\otimes v:u_1\otimes u_1\otimes u_1\sqrt{\mu}\right),L^{-1}B(v)\right\rangle
-\frac{1}{6}\na_x\cdot\left \langle L\left(u_1\cdot v|v|^2|u_1|^2\sqrt{\mu}\right),L^{-1}B(v)\right\rangle
\\&=-\frac{1}{3}\na_x\cdot(u_1|u_1|^2).
\end{split}
\end{equation}
For the remaining terms in \eqref{ta1} and \eqref{ta2}, we will show that
\begin{equation}\label{ta4}
\begin{split}
\frac{\ka_\ast}{3}&\Delta (|u_1|^2)+\frac{1}{2}\na_x\cdot\left \langle v\cdot \na_x\left\{ A(v):u_1\otimes u_1\right\},L^{-1}B(v)\right\rangle
\\&+\na_x\cdot\left \langle\Gamma\left( u_1\cdot v\sqrt{\mu}, L^{-1}A(v):\na_xu_1\right),L^{-1}B(v)\right\rangle\\
&+\na_x\cdot\left \langle\Gamma\left(L^{-1}A(v):\na_xu_1,u_1\cdot v\sqrt{\mu}\right),L^{-1}B(v)\right\rangle\\
=&\mu_\ast\na_x\cdot\left(u_1(\na_xu_1+(\na_x u_1)^T)\right).
\end{split}
\end{equation}
To confirm this, we first get from \eqref{AtB} that
\begin{equation*}
\begin{split}
\sum\limits_{i,k,l}&\left\{\left\langle A_{ki}(v), v_lL^{-1}B_j(v)\right\rangle-\left\langle A_{kl}(v), v_iL^{-1}B_j(v)\right\rangle \right\}u_i\pa_lu_1^k\\
=&\frac{2\ka_\ast}{3}\sum\limits_{i,k,l}(\de_{ij}\de_{kl}-\de_{ki}\de_{lj})u_1^i\pa_lu_1^k
=\frac{2\ka_\ast}{3}\sum\limits_{k}u_j\pa_ku_1^k-\frac{2\ka_\ast}{3}\sum\limits_{i}u_1^j\pa_ju_1^i=-\frac{2\ka_\ast}{3}\sum\limits_{i}u_1^j
\pa_ju_1^i,
\end{split}
\end{equation*}
Then the left hand side of \eqref{ta4} can be further rewritten as
\begin{equation*}
\begin{split}
-\na_x\cdot&\left\{\sum\limits_{i,k,l}\left\{\left\langle A_{ki}(v), v_lL^{-1}B(v)\right\rangle-\left\langle A_{kl}(v), v_iL^{-1}B(v)\right\rangle \right\}u_1^i\pa_lu_1^k\right\}
\\&+\na_x\cdot\left\{\sum\limits_{i,k,l}\left\langle A_{ki}(v), v_lL^{-1}B(v)\right\rangle u_1^i\pa_lu_1^k\right\}
\\&+\na_x\cdot\left \langle\Gamma\left( u_1\cdot v\sqrt{\mu}, L^{-1}A(v):\na_xu_1\right),L^{-1}B(v)\right\rangle\\
&+\na_x\cdot\left \langle\Gamma\left(L^{-1}A(v):\na_xu_1,u_1\cdot v\sqrt{\mu}\right),L^{-1}B(v)\right\rangle\\
=&\na_x\cdot\left\{\sum\limits_{i,k,l}\left\langle A_{kl}(v), v_iL^{-1}B(v)\right\rangle u_1^i\pa_lu_1^k\right\}\\&+\na_x\cdot\left \langle\Gamma\left( u_1\cdot v\sqrt{\mu}, L^{-1}A(v):\na_xu_1\right),L^{-1}B(v)\right\rangle
\\&+\na_x\cdot\left \langle\Gamma\left( u_1\cdot v\sqrt{\mu}, L^{-1}A(v):\na_xu_1\right),L^{-1}B(v)\right\rangle\\
=&\sum\limits_{j} \pa_j\left\{u_1^i\langle A_{ij}(v), L^{-1}A_{kl}(v)\rangle\pa_lu_1^k\right\},
\end{split}
\end{equation*}
according to \eqref{ABv}.
Hence \eqref{ta4} is valid. We now conclude from \eqref{tta}, \eqref{ta1}, \eqref{ta2}, \eqref{ta3} and \eqref{ta4} that
\begin{equation}\label{taeq}
\pa_t\left(\frac{1}{2}|u_1|^2+\frac{3}{2}\ta_1-\rho_1\right)+\na_x\cdot\left [ u_1\left(\frac{5}{2}\ta_1+\frac{1}{2}|u_1|^2\right)\right]
=\ka_\ast\Delta\ta_1+\mu_\ast\na_x\cdot\left(u_1(\na_xu_1+(\na_x u_1)^T)\right).
\end{equation}
Lastly, the subtraction of \eqref{taeq} and $u_1\cdot\eqref{ueq}$ yields the energy equation $\eqref{INSF}_3$.

Likewise, one can see that \eqref{INSF2} follows from the inner products $\langle\eqref{f4e},\sqrt{\mu}\rangle$, $\langle\eqref{f5e},v\sqrt{\mu}\rangle$, $\langle\eqref{f6e},\frac{v^2-5}{2}\sqrt{\mu}\rangle$ and $\langle\eqref{f6e},\sqrt{\mu}\rangle$, we refer to \cite[Section 4, pp.643]{Guo-2006} for more details.

\subsection{Diffusive coefficients}
The diffusive coefficients $f_1,f_2,\cdots,f_6$ will be determined by solving the Cauchy problem \eqref{INSF} and \eqref{INSF2} on torus $\T^3.$

\begin{proposition}\label{ss}
Assume the condition $(\CA_1)$ in Theorem \ref{mre} is valid,
then there exist unique functions $f_1,f_2,\cdots,f_6$ with zero mean hydrodynamic fields, which read
$$
(f_{r}(t,x,v),[1,v,(v^2-3)]\sqrt{\mu})=0,\ r=1,2,\cdots,6,
$$
such that $f_1,f_2,\cdots,f_6$ satisfy \eqref{INSF}, \eqref{INSF2} and $\eqref{f1-5}$.
Moreover, for $1\leq r \leq6$, and for any $s\geq 2$ and $l\geq0$, there exists $\la_0>0$ and a polynomial $P$ with $P(0)=0$ such that
\begin{equation*}
\begin{split}
&\sum\limits_{\al_0+\al\leq s\atop{\al_0\leq s-1}}\|\pa_t^{\al_0}\na_x^{\al}f_r(t)\|_2\leq e^{-\la_0t}
P(
\|[u_{1,0},\ta_{1,0}]\|_{H^{2s+2(r-1)}}+\|[u_{2,0},\ta_{2,0}]\|_{H^{2s}}),\ 1\leq r\leq 3,\\
&\sum\limits_{\al_0+\al\leq s\atop{\al_0\leq s-1}}\|\pa_t^{\al_0}\na_x^{\al}f_r(t)\|_2\leq e^{-\la_0t}
P(
\|[u_{1,0},\ta_{1,0}]\|_{H^{2s+2r}}+\|[u_{2,0},\ta_{2,0}]\|_{H^{2s+2(r-1)}}),\ 4\leq r\leq 6,
\end{split}
\end{equation*}
\begin{equation*}
\begin{split}
\sum\limits_{\al_0+\al\leq s\atop{\al_0\leq s-1}}\|w_l\pa_t^{\al_0}\na_x^{\al}f_r(t)\|_\infty\leq e^{-\la_0t} P(\|[u_{1,0},\ta_{1,0}]\|_{H^{2s+2r}}+\|[u_{2,0},\ta_{2,0}]\|_{H^{2s+2}}),\ 1\leq r\leq 3,
\end{split}
\end{equation*}
and
\begin{equation}\label{ssol}
\begin{split}
&\sum\limits_{\al_0+\al\leq s\atop{\al_0\leq s-1}}\|w_l\pa_t^{\al_0}\na_x^{\al}f_r(t)\|_\infty\leq e^{-\la_0t} P(\|[u_{1,0},\ta_{1,0}]\|_{H^{2s+2r+2}}+\|[u_{2,0},\ta_{2,0}]\|_{H^{2s+2r}}),\ 4\leq r\leq 6.
\end{split}
\end{equation}
\end{proposition}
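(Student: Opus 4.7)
The plan is to treat \eqref{INSF}, \eqref{INSF2}, and \eqref{f1-5} as a cascade: first solve the genuinely nonlinear closed system \eqref{INSF} for the leading fluid unknowns $(\rho_1,u_1,\ta_1)$, next treat \eqref{INSF2} as a \emph{linear} inhomogeneous system for $(\rho_2,u_2,\ta_2,u_3)$ driven by quantities built from $(\rho_1,u_1,\ta_1)$, and finally read off $f_1,\dots,f_6$ from the algebraic / $L^{-1}$ formulas in \eqref{f1-5}. At every stage the zero-mean constraints in \eqref{meanm} are preserved by the equations, which will be crucial for producing exponential decay via Poincar\'e's inequality on $\T^3$.

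First I would establish global existence for \eqref{INSF} in $H^s(\T^3)$ for arbitrary $s\ge 2$ under the smallness hypothesis $\|u_{1,0}\|_{H^4}+\|\ta_{1,0}\|_{H^4}\le\vps_0$. The system is a Navier--Stokes--Fourier pair for $(u_1,\ta_1)$ coupled through the viscous heating $\tfrac12\mu_\ast|\na_xu_1+(\na_xu_1)^T|^2$, with $\rho_1$ recovered from $p_1=\rho_1+\ta_1$ via the Boussinesq relation imposed by incompressibility. The standard energy identity after applying $\pa_x^\alpha$ for $|\alpha|\le s$ gives
\begin{equation*}
\tfrac{1}{2}\tfrac{d}{dt}\bigl(\|u_1\|_{H^s}^2+\|\ta_1\|_{H^s}^2\bigr)+\mu_\ast\|\na_xu_1\|_{H^s}^2+\ka_\ast\|\na_x\ta_1\|_{H^s}^2\lesssim \|[u_1,\ta_1]\|_{H^s}\cdot\|\na_x[u_1,\ta_1]\|_{H^s}^2,
\end{equation*}
where the viscous heating produces a trilinear term of the same shape after one integration by parts against $\ta_1$. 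Since both $u_1$ and $\tfrac32\ta_1-\rho_1$ retain zero mean under the flow, Poincar\'e's inequality supplies $\|[u_1,\ta_1]\|_{H^s}\lesssim\|\na_x[u_1,\ta_1]\|_{H^s}$, the nonlinear term is absorbed by the dissipation for small data, and one obtains $\|[u_1,\ta_1](t)\|_{H^s}\lesssim e^{-\la_0t}\|[u_1,\ta_1](0)\|_{H^s}$. Time derivatives are converted into spatial ones directly from the equations, at the cost of two orders of $x$-regularity per $\pa_t$, which accounts for the appearance of $H^{2s+2(r-1)}$-type norms on the right-hand side of the claim.

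Next I solve \eqref{INSF2} for $(\rho_2,u_2,\ta_2)$ together with $u_3$. Once $(\rho_1,u_1,\ta_1)$ are frozen, this is a linear incompressible Navier--Stokes--Fourier type system with a source $\mathcal S(t,x)$ that is polynomial in $(\rho_1,u_1,\ta_1)$, their derivatives, and Lemma~\ref{Burt}-style moments of $L^{-1}$-inversions thereof. The same energy/Poincar\'e scheme applied to the linear part yields Duhamel's inequality
\begin{equation*}
\|[\rho_2,u_2,\ta_2](t)\|_{H^s}\lesssim e^{-\la_0t}\|[\rho_2,u_2,\ta_2](0)\|_{H^s}+\int_0^t e^{-\la_0(t-\tau)}\|\mathcal S(\tau)\|_{H^s}\,d\tau,
\end{equation*}
and since $\mathcal S$ inherits exponential decay from the first step (with losses of spatial derivatives fully accounted for by the parabolic rule $\pa_t\leftrightarrow\Delta$), the full triple $(\rho_2,u_2,\ta_2)$ likewise decays. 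Zero mean is propagated because every source is a spatial divergence or a product of zero-mean quantities; the divergence-free projection $\FP_0$ and Lemma~\ref{risez} recover $u_2,u_3$ from their curls and divergences. This is precisely why the hydrodynamic initial data are forced to satisfy the exact combinations in \eqref{meanm}.

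Finally, plugging the fluid fields into \eqref{f1-5} defines $f_1,\dots,f_6$ explicitly as finite sums of $($polynomial in $v)\sqrt{\mu}$ times $(t,x)$-coefficients, together with applications of $L^{-1}$ to microscopic pieces that are automatically in $\mathscr N(L)^\perp$ by construction. Since $L^{-1}$ is bounded on $w_l L^\infty_v$ and on $L^2_v\cap\mathscr N(L)^\perp$, and since each $v^k\sqrt{\mu}$ is dominated by $w_l^{-N}\mu^{1/4}$ for any $N$, the claimed $L^2_{x,v}$ and $w_l L^\infty_{x,v}$ bounds on $\pa_t^{\al_0}\na_x^\al f_r$ reduce to Sobolev estimates on $(\rho_i,u_i,\ta_i,u_3)$, with the derivative count in the statement obtained by tallying: one $x$-derivative per application of $v\cdot\na_x$ inside $L^{-1}$, two $x$-derivatives per $\pa_t$, and the embedding $H^2(\T^3)\hookrightarrow L^\infty(\T^3)$ to upgrade from $L^2_x$ to $L^\infty_x$. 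The main technical hurdle is the first step: controlling the product $|\na_xu_1+(\na_xu_1)^T|^2$ at high Sobolev level without spoiling smallness; all subsequent estimates are a bookkeeping exercise in propagating exponential decay through a linear cascade.
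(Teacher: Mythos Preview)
Your approach is essentially the paper's: solve \eqref{INSF} by energy methods with Poincar\'e on $\T^3$, then treat \eqref{INSF2} as a linear cascade driven by the first-tier fields, and finally read off $f_1,\dots,f_6$ from \eqref{f1-5} using boundedness of $L^{-1}$ on weighted spaces. One refinement worth absorbing from the paper: since the $u_1$-equation in \eqref{INSF} is a closed incompressible Navier--Stokes system independent of $(\rho_1,\theta_1)$, the paper estimates $u_1$ first in isolation and only then handles $\theta_1$ via the substitution $\rho_1=-\Delta^{-1}\nabla_x\cdot(u_1\cdot\nabla_xu_1)-\theta_1$; this matters because what is actually conserved is $\int_{\T^3}(3\theta_1+|u_1|^2)\,dx$, not $\int_{\T^3}(\tfrac32\theta_1-\rho_1)\,dx$ as you state, so your Poincar\'e step for $\theta_1$ needs a small correction term quadratic in $u_1$ (harmless under the smallness assumption, but it should be made explicit).
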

\begin{proof}
To determine $f_1$ and $f_2$, we start by solving the system \eqref{INSF}.
The basic tool in the proof of the existence of \eqref{INSF} is the Galerkin approximation and the classical energy method \cite{Temam-1979,Matsumura-Nishida-1980}, we skip the details for brevity.
In what follows,
we first show that
\begin{equation}\label{1de}
\begin{split}
\sum\limits_{\al_0+\al\leq2}\left\|\pa_t^{\al_0}\na_x^\al u_1\right\|_2
+\sum\limits_{\al_0+\al\leq2,\al_0\leq1}\left\|\pa_t^{\al_0}\na_x^\al[\rho_1,\ta_1]\right\|_2\leq Ce^{-\la_0t}P\left(\|[u_{1,0},\ta_{1,0}]\|_{H^{4}}\right)
\end{split}
\end{equation}
under the {\it a priori} assumption
\begin{equation}\label{apss}
\sum\limits_{\al_0+\al\leq2}\left\|\pa_t^{\al_0}\na_x^\al u_1\right\|_2
+\sum\limits_{\al_0+\al\leq2,\al_0\leq1}\left\|\pa_t^{\al_0}\na_x^\al[\rho_1,\ta_1]\right\|_2\leq \vps_0.
\end{equation}
Taking the inner product of $\pa_t^{\al_0}\na_x^\al\eqref{INSF}_2$ with $\pa_t^{\al_0}\na_x^\al u_1$ $(\al_0+\al\leq2)$ over $\T^3$ and employing $\na_x\cdot u_1=0$ and Sobolev's inequality, one has
\begin{equation}\label{su}
\begin{split}
\frac{1}{2}\frac{d}{dt}\|\pa_t^{\al_0}\na_x^\al u_1\|^2_2+\mu_\ast\|\na_x \pa_t^{\al_0}\na_x^\al u_1\|^2_2\leq&
\sum\limits_{\al_0'\leq \al_0,\al'\leq \al}|(\pa_t^{\al'_0}\na_x^{\al'}u_1\cdot\na_x\pa_t^{\al_0-\al_0'}\na_x^{\al-\al'} u_1,\pa_t^{\al_0}\na_x^\al u_1)|\\
 \leq& C\sum\limits_{\al'_0+\al'\leq2}\left\{\|\pa_t^{\al'_0}\na_x^{\al'} u_1\|_{2}\|\na_x \pa_t^{\al'_0}\na^{\al'} u_1\|_2\right\}\| \pa_t^{\al_0}\na_x^\al u_1\|_2.
\end{split}
\end{equation}
Taking the summation of \eqref{su} over $\al_0+\al\leq2$ and using \eqref{apss}, we obtain for some $\la_1>0$
\begin{equation}\label{sud0}
\begin{split}
\sum\limits_{\al_0+\al\leq2}\frac{d}{dt}\|\pa_t^{\al_0}\na_x^\al u_1\|^2_2+2\la_1\sum\limits_{\al_0+\al\leq2}\| \pa_t^{\al_0}\na_x^\al u_1\|^2_2\leq 0,
\end{split}
\end{equation}
where we have also used the following Sobolev inequality on torus
\begin{equation}\label{sob}
\|u\|_p\leq C\|\na_x u\|_2\ \  \text{with}\ \ p\in[2,6]\ \ \text{for} \ \ u\in H^1(\T^3)\ \text{and}\ \int_{\T^3}udx=0.
\end{equation}
\eqref{sud0} further implies
\begin{equation}\label{sud}
\begin{split}
\sum\limits_{\al_0+\al\leq2}\|\pa_t^{\al_0}\na_x^\al u_1\|_2
 \leq& Ce^{-\la_1t}\sum\limits_{\al_0+\al\leq2}\|\pa_t^{\al_0}\na_x^\al u_{1,0}\|_2.
\end{split}
\end{equation}
We
note immediately that the the temporal derivatives of the above initial data
are understood by the equations $\eqref{INSF}_2$, for instance,
$\pa_t\pa_x^\al u_{1,0}$
is defined as
\begin{equation}\label{tu1dec}
\lim\limits_{t\to 0_+}\pa_t\na_x^\al u_1(t,x)=\lim\limits_{t\to 0_+}\na_x^\al\left\{-\FP_0(u_1\cdot\na_xu_1)+\mu_\ast\Delta u_1\right\}
=\na_x^\al\left\{-\FP_0(u_{1,0}\cdot\na_xu_{1,0})+\mu_\ast\Delta u_{1,0}\right\}.
\end{equation}
With this, one
can formally view the two spatial derivatives as ``equivalent" to one temporal derivative
and therefore there exists a polynomial $P$ with $P(0)=0$ such that
\begin{equation*}
\sum\limits_{\al_0+\al\leq2}\|\pa_t^{\al_0}\na_x^\al u_{1,0}\|_2\leq P(\|u_{1,0}\|_{H^4}).
\end{equation*}
As to the estimates for $\ta_1$ and $\rho_1$,
we first get from equation $\eqref{INSF}_2$ that
\begin{equation}\label{rhoex}
\begin{split}
\rho_1(t,x)=-\Delta^{-1}\na_x\cdot(u_1\cdot \na_xu_1)-\ta_1.
\end{split}
\end{equation}
Inserting \eqref{rhoex} into $\eqref{INSF}_3$, one has
\begin{equation}\label{ta12}
\begin{split}
\frac{5}{2}\pa_t\ta_1&+\pa_t\Delta^{-1}\na_x\cdot(u_1\cdot \na_xu_1)+u_1\cdot\na_x\left(\frac{5}{2}\ta_1+\Delta^{-1}\na_x\cdot(u_1\cdot \na_xu_1)\right)\\
=&\ka_\ast\Delta\theta_1+\frac{1}{2}\mu_\ast
\left|\na_xu_1+(\na_xu_1)^T\right|^2.
\end{split}
\end{equation}
We now gets from the inner product of $\pa_t^{\al_0}\na_x^\al\eqref{ta12}$ $(\al_0+\al\leq 2, \al_0\leq 1)$ with $\pa_t^{\al_0}\na_x^\al\ta_1$ over $\T^3$ that
\begin{equation*}\label{ta1p3}
\begin{split}
\frac{5}{2}(\pa_t&\pa_t^{\al_0}\na_x^\al\ta_1,\pa_t^{\al_0}\na_x^\al\ta_1)+(\pa_t\pa_t^{\al_0}\na_x^\al\Delta^{-1}\na_x\cdot(u_1\cdot \na_xu_1),\pa_t^{\al_0}\na_x^\al\ta_1)\\&+(\pa_t^{\al_0}\na_x^\al[u_1\cdot\na_x(5/2\ta_1+\Delta^{-1}\na_x\cdot(u_1\cdot \na_xu_1))],\pa_t^{\al_0}\na_x^\al\ta_1)\\
=&\ka_\ast(\pa_t^{\al_0}\na_x^\al\Delta_x \ta_1,\pa_t^{\al_0}\na_x^\al\ta_1)
+\frac{\mu_\ast}{2}\left(\pa_t^{\al_0}\na_x^\al
\left|\na_xu_1+(\na_xu_1)^T\right|^2,\pa_t^{\al_0}\na_x^\al\ta_1\right).
\end{split}
\end{equation*}
By integration by parts and using \eqref{sob} and Lemma \eqref{risez}, one has
\begin{equation}\label{ta1p4}
\begin{split}
\frac{d}{dt}&\sum\limits_{\al_0+\al\leq 2,\al_0\leq 1}\|\pa_t^{\al_0}\na_x^\al\ta_1\|_2^2
+\la_2\sum\limits_{\al_0+\al\leq 2,\al_0\leq 1}\|\na_x\pa_t^{\al_0}\na_x^\al\ta_1\|_2^2\\
\leq& C\sum\limits_{\al_0+\al\leq2}\|\pa_t^{\al_0}\na_x^\al u_1\|_{2}^2\sum\limits_{\al_0+\al\leq 2}\|\pa_t^{\al_0}\na_x^\al u_1\|_{H^1}^2,
\end{split}
\end{equation}
for some $\la_2>0.$
On the other hand, \eqref{taeq} and \eqref{meanm} imply
$$
\int_{\T^3}(3\ta_1+|u_1|^2)dx=0,
$$
using this and Poincar$\acute{e}$'s inequality, one further gets from \eqref{ta1p4} that
\begin{equation*}\label{ta1p41}
\begin{split}
\frac{d}{dt}&\sum\limits_{\al_0+\al\leq2,\al_0\leq1}\|\pa_t^{\al_0}\na_x^\al\ta_1\|_2^2
+2\la_2\sum\limits_{\al_0+\al\leq2,\al_0\leq1}\|\pa_t^{\al_0}\na_x^\al\ta_1\|_2^2\\
\leq& C\sum\limits_{\al_0+\al\leq2}\|\pa_t^{\al_0}\na_x^\al u_1\|_{2}^2\sum\limits_{\al_0+\al\leq 2}\|\pa_t^{\al_0}\na_x^\al u_1\|_{H^1}^2.
\end{split}
\end{equation*}
As a sequence, according to \eqref{su}, it follows that
\begin{equation}\label{ta1p6}
\begin{split}
\sum\limits_{\al_0+\al\leq2,\al_0\leq 1}\|\pa_t^{\al_0}\na_x^\al\ta_1\|_2^2\leq& e^{-2\la_2t}\sum\limits_{\al_0+\al\leq2,\al_0\leq 1}\|\pa_t^{\al_0}\na_x^\al\ta_{1,0}\|_2^2
\\&+P^2(\|u_{1,0}\|_{H^4})e^{-2\la_2t}\int_{0}^te^{(2\la_2-2\la_1)s} \|\pa_s^{\al_0}\na_x^\al u_1\|_{H^1}^2ds\\
\leq& e^{-2\la_2t}\sum\limits_{\al_0+\al\leq2,\al_0\leq 1}\|\pa_t^{\al_0}\na_x^\al\ta_{1,0}\|_2^2
+e^{-2\la_2t}P^2(\|u_{1,0}\|_{H^4}),
\end{split}
\end{equation}
provided $0<\la_2< \la_1$.

Moreover, \eqref{rhoex} gives
\begin{equation}\label{ta1p5}
\begin{split}
\sum\limits_{\al_0+\al\leq2,\al_0\leq1}\|\pa_t^{\al_0}\na_x^\al\rho_1\|_2^2\leq \vps_0\sum\limits_{\al_0+\al\leq2}\|\pa_t^{\al_0}\na_x^\al u_1\|^2_{2}
+C\sum\limits_{\al_0+\al\leq2,\al_0\leq1}\|\pa_t^{\al_0}\na_x^\al\ta_1\|_2^2.
\end{split}
\end{equation}
As \eqref{tu1dec}, we define
\begin{equation*}\label{tta101}
\begin{split}
\pa_t\na_x^\al\ta_{1,0}=&-\frac{2}{5}\lim\limits_{t\to 0_+}\pa_t\na_x^\al\Delta^{-1}\na_x\cdot(u_1\cdot \na_xu_1)
+\frac{2}{5}\na_x^{\al}\lim\limits_{t\to 0_+}\left\{\ka_\ast\Delta\theta_1+\frac{1}{2}\mu_\ast
\left|\na_xu_1+(\na_xu_1)^T\right|^2\right\}\\&-
\frac{2}{5}\lim\limits_{t\to 0_+}\na_x^\al\left\{u_1\cdot\na_x\left(\frac{5}{2}\ta_1+\Delta^{-1}\na_x\cdot(u_1\cdot \na_xu_1)\right)\right\}.
\end{split}
\end{equation*}
Consequently,
\begin{equation}\label{tta102}
\begin{split}
\sum\limits_{\al_0+\al\leq2,\al_0\leq1}\|\pa_t^{\al_0}\na_x^\al\ta_{1,0}\|_2\leq P(\|[u_{1,0},\ta_{1,0}]\|_{H^4}).
\end{split}
\end{equation}
Letting $\la_2<\la_1$ and taking $\la_0=\min\{\la_2,\la_1\}$, we thereby obtain from \eqref{ta1p6}, \eqref{ta1p5} and \eqref{tta102}
that
\begin{equation}\label{ta1p7}
\begin{split}
\sum\limits_{\al_0+\al\leq2,\al_0\leq1}\|\pa_t^{\al_0}\na_x^\al[\rho_1,\ta_1]\|_2\leq& e^{-\la_0t}P(\|[u_{1,0},\ta_{1,0}]\|_{H^4}).
\end{split}
\end{equation}
Therefore, \eqref{1de} follows from \eqref{sud} and \eqref{ta1p7}. It should be pointed out that we can obtain the exponential decay of $\|\pa_t^2u_1\|_2$ but $\|\pa_t^2[\rho_1,\ta_1]\|_2$, this phenomenon is essentially determined by the different structure of the equations
$\eqref{INSF}_2$ and \eqref{ta12}. In what follows, we shall not include the highest time-derivatives of $u_1$ or $u_2$ in order to make our presentation more easy to read.

With \eqref{1de} in hand, we now
turn to deduce the following higher order $L^2$ estimates:
\begin{equation}\label{1hde}
\begin{split}
\sum\limits_{\al_0+\al\leq s,\al_0\leq s-1}\left\|\pa_t^{\al_0}\na_x^\al[\rho_1,u_1,\ta_1]\right\|_2\leq Ce^{-\la_0t}P\left(\|[u_{1,0},\ta_{1,0}]\|_{H^{2s}}\right),\ s\geq3.
\end{split}
\end{equation}
Since $\sum\limits_{\al_0+\al= s,\al_0\leq s-1}\left\|\pa_t^{\al_0}\na_x^\al[\rho_1,u_1,\ta_1]\right\|_2$ may not be small for $s\geq3$, we need to proceed differently.
From $\eqref{INSF}_2$, it follows for $\al_0+\al=s\geq3$, $\al_0\leq1$ and $\eta>0$
\begin{equation*}\label{suh}
\begin{split}
\frac{1}{2}\frac{d}{dt}&\sum\limits_{\al_0+\al=s}\|\pa_t^{\al_0}\na_x^\al u_1\|^2_2
+\mu_\ast\sum\limits_{\al_0+\al=s}\|\na_x \pa_t^{\al_0}\na_x^\al u_1\|^2_2\\
\leq&
\sum\limits_{\al_0'\leq \al_0,\al'\leq \al}\sum\limits_{\al_0+\al=s}|(\pa_t^{\al'_0}\na_x^{\al'}u_1\otimes\pa_t^{\al_0-\al_0'}\na_x^{\al-\al'} u_1,\na_x\pa_t^{\al_0}\na_x^\al u_1)|\\
\leq& \sum\limits_{\al'_0+\al'\leq1}\sum\limits_{\al_0+\al=s}\|\pa_t^{\al'_0}\na_x^{\al'}u_1\|_3\|\pa_t^{\al_0-\al_0'}\na_x^{\al-\al'} u_1\|_6\|\na_x\pa_t^{\al_0}\na_x^\al u_1\|_2\\&+\sum\limits_{\al'_0+\al'\geq s-1}\sum\limits_{\al_0+\al=s}\|\pa_t^{\al'_0}\na_x^{\al'}u_1\|_6\|\pa_t^{\al_0-\al_0'}\na_x^{\al-\al'} u_1\|_3\|\na_x\pa_t^{\al_0}\na_x^\al u_1\|_2\\&+\sum\limits_{1<\al'_0+\al'< s-1}\sum\limits_{\al_0+\al=s}\|\pa_t^{\al'_0}\na_x^{\al'}u_1\|_\infty
\|\pa_t^{\al_0-\al_0'}\na_x^{\al-\al'} u_1\|_2\|\na_x\pa_t^{\al_0}\na_x^\al u_1\|_2\\
\leq&\left(\sum\limits_{\al'_0+\al'\leq1 }\|\pa_t^{\al'_0}\na_x^{\al'}u_1\|_{H^1}+\eta\right)\sum\limits_{\al_0+\al=s}\|\na_x \pa_t^{\al_0}\na_x^\al u_1\|^2_2+C_\eta\sum\limits_{\al'_0+\al'\leq s-1}\|\pa_t^{\al'_0}\na_x^{\al'}u_1\|^4_2,
\end{split}
\end{equation*}
where the last inequality holds due to \eqref{sob} and Cauchy-Schwartz's inequality with $\eta>0.$
Then one sees that $u_1$ enjoys the estimates \eqref{1hde},
using a method of induction on $s\geq3$. The corresponding estimates for $\rho_1$ and $\ta_1$ can be obtained in the similar way as deriving \eqref{ta1p7}.

We get at once from \eqref{1hde} and the definitions for $f_1$ and $f_2$ in \eqref{f1-5} that
\begin{eqnarray}\label{f1-f2l2}
\left\{
\begin{array}{rll}
&&\sum\limits_{\al_0+\al\leq s,\al_0\leq s-1}\left\|\pa_t^{\al_0}\na_x^\al f_1\right\|_2
\leq Ce^{-\la_0t}P\left(\|u_{1,0}\|_{H^{2s}}\right),\\[2mm]
&&\sum\limits_{\al_0+\al\leq s,\al_0\leq s-1}\left\|\pa_t^{\al_0}\na_x^\al f_2\right\|_2\leq Ce^{-\la_0t}P\left(\|[u_{1,0},\ta_{1,0}]\|_{H^{2s+2}}\right).
\end{array}\right.
\end{eqnarray}
In addition, from Sobolev's inequality, it follows that
\begin{equation*}\label{f1lif}
\begin{split}
\sum\limits_{\al_0+\al\leq s,\al_0\leq s-1}\left\|w_l\pa_t^{\al_0}\na_x^\al f_1\right\|_\infty\leq Ce^{-\la_0t}P\left(\|u_{1,0}\|_{H^{2s+2}}\right),
\end{split}
\end{equation*}
and
\begin{equation*}\label{f2lif}
\begin{split}
\sum\limits_{\al_0+\al\leq s,\al_0\leq s-1}\left\|w_l\pa_t^{\al_0}\na_x^\al f_2\right\|_\infty\leq Ce^{-\la_0t}P\left(\|[u_{1,0},\ta_{1,0}]\|_{H^{2s+4}}\right),
\end{split}
\end{equation*}
for any $l\geq0.$
Since $L^{-1}$ preserves decay in $v$, c.f. \cite{Caflisch-1980}, one also has by Lemma \ref{gadec} and $\eqref{f1-5}_3$
\begin{equation}\label{f3mi2}
\begin{split}
\sum\limits_{\al_0+\al\leq s,\al_0\leq s-1}\left\|w_l\pa_t^{\al_0}\na_x^\al \{\FI-\FP\}f_3\right\|_2\leq Ce^{-\la_0t}P\left(\|[u_{1,0},\ta_{1,0}]\|_{H^{2s+4}}\right),
\end{split}
\end{equation}
and
\begin{equation*}\label{f3miif}
\begin{split}
\sum\limits_{\al_0+\al\leq s,\al_0\leq s-1}\left\|w_l\pa_t^{\al_0}\na_x^\al\{\FI-\FP\}f_3\right\|_\infty\leq Ce^{-\la_0t}P\left(\|[u_{1,0},\ta_{1,0}]\|_{H^{2s+6}}\right).
\end{split}
\end{equation*}
Let us now turn to estimate $u_2$, $f_4$, $f_5$ and $f_6$. To do this, we first rewrite $\eqref{INSF2}_1$ and $\eqref{INSF2}_2$ as
\begin{eqnarray}\label{INSF3}
\left\{\begin{array}{rlll}
\begin{split}
&\pa_t \rho_1+\na_x\cdot \{\FI-\FP_0\}u_2+\na_x\rho_1\cdot u_1=0,\\
&\pa_t\FP_0u_2+\na_x u_1\cdot \FP_0u_2+u_1\cdot \na_x \FP_0u_2
+\na_x\left(\rho_2+\ta_2-\frac{1}{3}u_1\cdot \FP_0 u_2\right)
\\&\quad=\mu_\ast\Delta \FP_0u_2+\mu_\ast\Delta \{\FI-\FP_0\}u_2-\pa_t\{\FI-\FP_0\}u_2-u_1\na_x\cdot \{\FI-\FP_0\}u_2-\na_x u_1\cdot \{\FI-\FP_0\}u_2
\\&\qquad-u_1\cdot \na_x  \{\FI-\FP_0\}u_2+\frac{1}{3}\na_x\left(u_1\cdot \{\FI-\FP_0\}u_2\right)
\\&\qquad+\frac{\mu_\ast}{3}\na_x\na_x\cdot\{\FI-\FP_0\}u_2
+\na_x\cdot\langle \pa_t f_2, L^{-1}A(v)\rangle
\\&\qquad-\na_x\cdot\langle\Gamma(f_2,f_2),L^{-1}A(v)\rangle
-\na_x\cdot\langle\Gamma(f_1,\{\FI-\FP\}f_3)+\Gamma(\{\FI-\FP\}f_3,f_1),L^{-1}A(v)\rangle
\\&\qquad-\pa_t(\rho_1u_1)-2\na_x\cdot(\rho u_1\otimes u_1)-\na_{x}(\rho_1\ta_1+\frac{5}{2}\ta_1^2-\frac{1}{3}\rho_1|u_1|^2)
\\&\qquad+\mu_\ast\Delta (\rho_1u_1)+\frac{\mu_\ast}{3}\na_x\na_x\cdot(\rho_1u_1),\\
&u_2(0,x)=u_{2,0}(x)=\FP_0u_{2,0}(x).
\end{split}\end{array}\right.
\end{eqnarray}
The proof for the existence of the above {\it linear} system is quite standard.
In what follows we will show that
\begin{equation}\label{2hde}
\begin{split}
\sum\limits_{\al_0+\al\leq s,\al_0\leq s-1}\left\|\pa_t^{\al_0}\na_x^\al u_2\right\|_2\leq Ce^{-\la_0t}P\left(\|[u_{1,0},\ta_{1,0}]\|_{H^{2s+2}}\right),\ s\geq3,
\end{split}
\end{equation}
under the condition \eqref{1hde}.

To begin with, observe that $\eqref{INSF2}_2$ and \eqref{meanm} imply
\begin{equation*}\label{meanu22}
\int_{\T^3}(u_2+\rho_1u_1)dx=\int_{\T^3}(u_{2,0}+\rho_{1,0}u_{1,0})dx=0.
\end{equation*}
Thus we know thanks to \eqref{Divop}
\begin{equation}\label{meanu23}
\int_{\T^3}\FP_0(u_2+\rho_1u_1)dx=\int_{\T^3}\{\FI-\FP_0\}(u_2+\rho_1u_1)dx=0.
\end{equation}
In light of $\eqref{INSF3}_1$ and \eqref{meanu23}, we have by standard elliptic estimates and Poincar$\acute{e}$'s inequality
\begin{equation*}
\begin{split}
\sum\limits_{\al_0+\al\leq s,\al_0\leq s-1}&\left\|\pa_t^{\al_0}\na_x^\al\{\FI-\FP_0\}(u_2+\rho_1u_1)\right\|_2\\
\leq& C\sum\limits_{\al_0+\al\leq s,\al_0\leq s-1}\left\|\pa_t^{\al_0}\na_x^\al\pa_t\rho_1\right\|_2\leq Ce^{-\la_0t}P\left(\|[u_{1,0},\ta_{1,0}]\|_{H^{2s+2}}\right),
\end{split}
\end{equation*}
which further yields
\begin{equation}\label{ndu2}
\begin{split}
\sum\limits_{\al_0+\al\leq s,\al_0\leq s-1}&\left\|\pa_t^{\al_0}\na_x^\al\{\FI-\FP_0\}u_2\right\|_2\\
\leq& C\sum\limits_{\al_0+\al\leq s,\al_0\leq s-1}\left\|\pa_t^{\al_0}\na_x^\al[\pa_t\rho_1,\rho_1,u_1]\right\|_2\leq
C\sum\limits_{\al_0+\al\leq s+1,\al_0\leq s}\left\|\pa_t^{\al_0}\na_x^\al[\rho_1,u_1]\right\|_2\\
\leq &Ce^{-\la_0t}P\left(\|[u_{1,0},\ta_{1,0}]\|_{H^{2s+2}}\right),
\end{split}
\end{equation}
according to \eqref{1hde}.

To prove \eqref{2hde}, it remains now to estimate $\FP_0 u_2$.
Taking the inner product of $\pa_t^{\al_0}\na_x^\al\eqref{INSF3}_2$ with $\na_x \pa_t^{\al_0}\na_x^\al \FP_0 u_2$ and applying Lemma \ref{es.nop} as well as Sobolev's inequality, one has for $\al_0+\al=s$, $\al_0\leq s-1$ and
$s\geq 3$
\begin{equation}\label{su2h}
\begin{split}
\frac{1}{2}\frac{d}{dt}&\sum\limits_{\al_0+\al=s}\|\pa_t^{\al_0}\na_x^\al \FP_0 u_2\|^2_2
+\mu_\ast\sum\limits_{\al_0+\al=s}\|\na_x \pa_t^{\al_0}\na_x^\al \FP_0 u_2\|^2_2\\
\leq&
2\sum\limits_{\al_0'\leq \al_0,\al'\leq \al}\sum\limits_{\al_0+\al=s}|(\pa_t^{\al'_0}\na_x^{\al'}u_1\otimes\pa_t^{\al_0-\al_0'}\na_x^{\al-\al'} \FP_0 u_2,\na_x\pa_t^{\al_0}\na_x^\al \FP_0 u_2)|\\
&+2\sum\limits_{\al_0+\al=s}\left|\left(\pa_t^{\al_0}\na_x^\al\left(u_1\otimes \{\FI-\FP_0\}u_2\right)
,\na_x\pa_t^{\al_0}\na_x^\al \FP_0 u_2\right)\right|\\
&+\sum\limits_{\al_0+\al=s}\left|\left(\pa_t^{\al_0}\na_x^\al\left(\langle \pa_t f_2, L^{-1}A(v)\rangle
-\langle\Gamma(f_2,f_2),L^{-1}A(v)\rangle\right),\na_x\pa_t^{\al_0}\na_x^\al \FP_0 u_2\right)\right|\\
&+\sum\limits_{\al_0+\al=s}\left|\left(\pa_t^{\al_0}\na_x^\al\left(\langle\Gamma(f_1,\{\FI-\FP\}f_3)
+\Gamma(\{\FI-\FP\}f_3,f_1),L^{-1}A(v)\rangle\right),
\na_x\pa_t^{\al_0}\na_x^\al \FP_0 u_2\right)\right|\\
&+\sum\limits_{\al_0+\al=s}\left|\left(\pa_t^{\al_0}\na_x^\al\left[\pa_t(\rho_1u_1)+2\na_x\cdot(\rho u_1\otimes u_1)+\mu_\ast\Delta (\rho_1u_1)\right],\pa_t^{\al_0}\na_x^\al \FP_0 u_2\right)\right|
\\
\leq&\left(\sum\limits_{\al'_0+\al'\leq1 }\|\pa_t^{\al'_0}\na_x^{\al'}u_1\|_{H^1}+\eta\right)\sum\limits_{\al_0+\al=s}\|\na_x \pa_t^{\al_0}\na_x^\al \FP_0 u_2\|^2_2+C_\eta\sum\limits_{\al_0+\al\leq s,\al_0\leq s-1}\|\pa_t^{\al_0}\na_x^{\al}u_1\|^2_2\\
&+C_\eta \sum\limits_{\al_0+\al\leq s,\al_0\leq s-1}\left\|\pa_t^{\al_0}\na_x^\al\{\FI-\FP_0\}u_2\right\|_2^2
+C_\eta\sum\limits_{\al_0+\al\leq s+1,\al_0\leq s}\|\pa_t^{\al_0}\na_x^{\al}u_1\|^2_2\|\pa_t^{\al_0}\na_x^{\al}\rho_1\|^2_2
\\&+C_\eta\sum\limits_{\al_0+\al\leq s+1,\al_0\leq s}\left\|\pa_t^{\al_0}\na_x^\al f_2\right\|^2_2
+C_\eta\sum\limits_{\al_0+\al\leq s}\left\|\pa_t^{\al_0}\na_x^\al f_1\right\|^2_2
\\&+C_\eta\sum\limits_{\al_0+\al\leq s,\al_0\leq s-1}\left\|\pa_t^{\al_0}\na_x^\al \{\FI-\FP\}f_3\right\|^2_2.
\end{split}
\end{equation}
Recalling \eqref{meanu23}, one has
\begin{equation}\label{du2}
\begin{split}
\sum\limits_{\al_0+\al\leq s,\al_0\leq s-1}&\left\|\pa_t^{\al_0}\na_x^\al\FP_0 u_2\right\|_2\\
\leq& \sum\limits_{\al_0+\al\leq s,\al_0\leq s-1}\left\|\pa_t^{\al_0}\na_x^\al\FP_0 (u_2+\rho_1u_1)\right\|_2
+\sum\limits_{\al_0+\al\leq s,\al_0\leq s-1}\left\|\pa_t^{\al_0}\na_x^\al\FP_0 (\rho_1u_1)\right\|_2\\
\leq& C\sum\limits_{\al_0+\al\leq s,\al_0\leq s-1}\left\|\pa_t^{\al_0}\na_x^\al\FP_0 \na_x(u_2+\rho_1u_1)\right\|_2
+\sum\limits_{\al_0+\al\leq s,\al_0\leq s-1}\left\|\pa_t^{\al_0}\na_x^\al\FP_0 (\rho_1u_1)\right\|_2.
\end{split}
\end{equation}
Plugging \eqref{1hde},
\eqref{f1-f2l2}, \eqref{f3mi2}, \eqref{ndu2} and \eqref{du2} into \eqref{su2h} leads us
\begin{equation*}
\begin{split}
\sum\limits_{\al_0+\al\leq s,\al_0\leq s-1}\left\|\pa_t^{\al_0}\na_x^\al\FP_0 u_2\right\|_2
\leq& Ce^{-\la_0t}P\left(
\|[u_{1,0},\ta_{1,0}]\|_{H^{2s+4}}+\|u_{2,0}\|_{H^{2s}}\right),\ s\geq3.
\end{split}
\end{equation*}
Therefore \eqref{2hde} is valid. Once the estimates for $u_2$ are obtained, one can immediately show that
\begin{eqnarray}\label{f3-f4mi}
\left\{\begin{array}{rll}
&&\sum\limits_{\al_0+\al\leq s,\al_0\leq s-1}\left\|\pa_t^{\al_0}\na_x^\al \FP f_3\right\|_2\leq Ce^{-\la_0t}P\left(\|[u_{1,0},\ta_{1,0}]\|_{H^{2s+4}}+\|u_{2,0}\|_{H^{2s}}\right),\\[2mm]
&&\sum\limits_{\al_0+\al\leq s,\al_0\leq s-1}\left\|\pa_t^{\al_0}\na_x^\al \{\FI-\FP\}f_4\right\|_2\leq Ce^{-\la_0t}P\left(\|[u_{1,0},\ta_{1,0}]\|_{H^{2s+6}}+\|u_{2,0}\|_{H^{2s+2}}\right),
\end{array}\right.
\end{eqnarray}
and
\begin{equation*}\label{f3-f4mi-if}
\begin{split}
&\sum\limits_{\al_0+\al\leq s,\al_0\leq s-1}\left\|w_l\pa_t^{\al_0}\na_x^\al \FP f_3\right\|_\infty\leq Ce^{-\la_0t}P\left(\|[u_{1,0},\ta_{1,0}]\|_{H^{2s+6}}+\|u_{2,0}\|_{H^{2s+2}}\right),\\
&\sum\limits_{\al_0+\al\leq s,\al_0\leq s-1}\left\|w_l\pa_t^{\al_0}\na_x^\al \{\FI-\FP\}f_4\right\|_\infty\leq Ce^{-\la_0t}P\left(\|[u_{1,0},\ta_{1,0}]\|_{H^{2s+8}}+\|u_{2,0}\|_{H^{2s+4}}\right),
\end{split}
\end{equation*}
according to the definition \eqref{f1-5}.
We now turn to estimate $\rho_2$ and $\ta_2$.
From $\eqref{INSF2}_2$, it follows
\begin{equation}\label{rho2}
\begin{split}
\na_x\rho_2=&\frac{1}{3}\na_x(u_1\cdot u_2)-\na_x\ta_2-(u_1\na_x\cdot u_2+\na_x u_1\cdot u_2+u_1\cdot \na_x u_2)+\mu_\ast\Delta u_2\\&-\pa_tu_2
+\frac{\mu_\ast}{3}\na_x\na_x\cdot u_2
+\na_x\cdot\langle \pa_t f_2, L^{-1}A(v)\rangle
-\na_x\cdot\langle\Gamma(f_2,f_2),L^{-1}A(v)\rangle
\\&-\na_x\cdot\langle\Gamma(f_1,\{\FI-\FP\}f_3)+\Gamma(\{\FI-\FP\}f_3,f_1),L^{-1}A(v)\rangle\\
&-\left\{\pa_t(\rho_1u_1)+2\na_x\cdot(\rho u_1\otimes u_1)+\na_{x}(\rho_1\ta_1+\frac{5}{2}\ta_1^2-\frac{1}{3}\rho_1|u_1|^2)\right\}\\
&+\mu_\ast\Delta (\rho_1u_1)+\frac{\mu_\ast}{3}\na_x\na_x\cdot(\rho_1u_1)\\
\eqdef& -\na_x\ta_2+\FR_{\rho_2},
\end{split}
\end{equation}
here $\FR_{\rho_2}$ denotes the summation of all the other terms except $\na_x\ta_2$ on the right hand side of the above identity.
Inserting $\pa_t^{\al_0}\na_x^{\al}\eqref{rho2}$ $(\al_0+\al\leq s,\al_0\leq s-1)$ into $\pa_t^{\al_0}\na_x^{\al}\na_x\eqref{INSF2}_3$ and taking the inner product of the resulting equation with $\pa_t^{\al_0}\na_x^{\al}\na_x\ta_2$, one has
\begin{equation*}\label{ta2ip}
\begin{split}
\frac{5}{2}(\pa_t& \pa_t^{\al_0}\na_x^{\al}\na_x\ta_2,\pa_t^{\al_0}\na_x^{\al}\na_x\ta_2)-(\pa_t^{\al_0}\na_x^{\al}\FR_{\rho_2},\pa_t^{\al_0}\na_x^{\al}\na_x\ta_2)
\\&+\frac{5}{2}(\pa_t^{\al_0}\na_x^{\al}\na_x\na_x\cdot(u_1\ta_2),\pa_t^{\al_0}\na_x^{\al}\na_x\ta_2)+
\frac{5}{6}(\pa_t^{\al_0}\na_x^{\al}\na_x\na_x\cdot(u_1u_1\cdot u_2),\pa_t^{\al_0}\na_x^{\al}\na_x\ta_2)\\=&\ka_\ast(\pa_t^{\al_0}\na_x^{\al}\na_x\Delta\ta_2,\pa_t^{\al_0}\na_x^{\al}\na_x\ta_2)
+\ka_\ast(\pa_t^{\al_0}\na_x^{\al}\na_x\Delta(\rho_1\ta_1+\ta_1^2),\pa_t^{\al_0}\na_x^{\al}\na_x\ta_2)
\\&+\frac{2\ka_\ast}{3}(\pa_t^{\al_0}\na_x^{\al}\na_x\Delta(u_1\cdot u_2),\pa_t^{\al_0}\na_x^{\al}\na_x\ta_2)
+\frac{\ka_\ast}{3}(\pa_t^{\al_0}\na_x^{\al}\na_x\Delta(\rho_1 u_1^2),\pa_t^{\al_0}\na_x^{\al}\na_x\ta_2)
\\&-\frac{1}{2}(\pa_t^{\al_0}\na_x^{\al}\na_x\pa_t(2u_1\cdot u_2+\rho_1 u_1^2+3\rho_1\ta_1),\pa_t^{\al_0}\na_x^{\al}\na_x\ta_2)\\&
-\frac{5}{2}(\pa_t^{\al_0}\na_x^{\al}\na_x\na_x\cdot(u_1(\rho_1\ta_1+\ta_1^2)),\pa_t^{\al_0}\na_x^{\al}\na_x\ta_2)
-\frac{5}{6}(\pa_t^{\al_0}\na_x^{\al}\na_x\na_x\cdot(u_1\rho u_1^2),\pa_t^{\al_0}\na_x^{\al}\na_x\ta_2)
\\&-(\pa_t^{\al_0}\na_x^{\al}\na_x\na_x\cdot \langle L^{-1}\left\{-\pa_tf_3-v\cdot\na_x\{\FI-\FP\}f_4
+\Gamma(f_2,f_3)+\Gamma(f_3,f_2)\right\},B(v)\rangle,\pa_t^{\al_0}\na_x^{\al}\na_x\ta_2)
\\&-(\pa_t^{\al_0}\na_x^{\al}\na_x\na_x\cdot \langle L^{-1}\left\{\Gamma(f_1,\{\FI-\FP\}f_4)+\Gamma(\{\FI-\FP\}f_4,f_1)\right\},B(v)\rangle
,\pa_t^{\al_0}\na_x^{\al}\na_x\ta_2).
\end{split}
\end{equation*}
By integration by parts and using Cauchy-Schwartz's inequality with $\eta>0$, we deduce
\begin{equation*}
\begin{split}
\frac{d}{dt}\|&\pa_t^{\al_0}\na_x^{\al}\na_x\ta_2\|_2^2+\la_3\|\pa_t^{\al_0}\na_x^{\al}\na^2_x\ta_2\|_2^2\\
\leq& C_\eta\sum\limits_{\al_0+\al\leq s+2,\al_0\leq s+1}\|\pa_t^{\al_0}\na_x^{\al}[\rho_1,u_1,\ta_1,u_2]\|^2_2
\\&+C_\eta\sum\limits_{\al_0+\al\leq s+2,\al_0\leq s+1}\left\|\pa_t^{\al_0}\na_x^{\al}[f_1,f_2,f_3,\{\FI-\FP\}f_4]\right\|^2_2
+\eta\|\pa_t^{\al_0}\na_x^{\al}\na_x\ta_2\|_{H^1}^2.
\end{split}
\end{equation*}
Poincar$\acute{e}$'s inequality further yields
\begin{equation}\label{ta2ip2}
\begin{split}
\|\pa_t^{\al_0}\na_x^{\al}\na_x\ta_2\|_2^2
\leq& C\sum\limits_{\al_0+\al\leq s+2,\al_0\leq s+1}e^{-\la_3t}\int_0^te^{\la_3s}\|\pa_t^{\al_0}\na_x^{\al}[\rho_1,u_1,\ta_1,u_2]\|^2_2ds
\\&+C\sum\limits_{\al_0+\al\leq s+2,\al_0\leq s+1}e^{-\la_3t}\int_0^te^{\la_3s}\left\|\pa_t^{\al_0}\na_x^{\al}[f_1,f_2,f_3,\{\FI-\FP\}f_4]\right\|^2_2ds.
\end{split}
\end{equation}
Invoking \eqref{meanm}, $\eqref{INSF2}_3$  and $\eqref{INSF2}_4$, one has
\begin{equation}\label{ta2con}
\int_{\T^3}(3\ta_{2}+2u_{1}\cdot u_{2}+\rho_{1}|u_{1}|^2+3\rho_{1}\ta_{1})dx=0,\ \int_{\T^3}\rho_2dx=0.
\end{equation}
Combing now \eqref{f1-f2l2}, \eqref{2hde}, \eqref{f3-f4mi}, \eqref{ta2ip2} and \eqref{ta2con} and taking $\la_3<\la_0$, we deduce
\begin{equation}\label{rho2dec}
\begin{split}
\sum\limits_{\al_0+\al\leq s,\al_0\leq s-1}\left\|\pa_t^{\al_0}\na_x^\al [\rho_2,\ta_2]\right\|_2
\leq& Ce^{-\la_0t}P\left(
\|[u_{1,0},\ta_{1,0}]\|_{H^{2s+8}}+\|[u_{2,0},\ta_{2,0}]\|_{H^{2s+6}}\right)
\\&+C\sum\limits_{\al_0+\al\leq s,\al_0\leq s-1}\left\|\pa_t^{\al_0}\na_x^\al \na_x(2u_1\cdot u_2+\rho_1 u_1^2+3\rho_1\ta_1)\right\|_2\\&+C\sum\limits_{\al_0+\al\leq s,\al_0\leq s-1}\left\|\pa_t^{\al_0}\na_x^\al (2u_1\cdot u_2+\rho_1 u_1^2+3\rho_1\ta_1)\right\|_2\\
\leq& Ce^{-\la_0t}P\left(
\|[u_{1,0},\ta_{1,0}]\|_{H^{2s+8}}+\|[u_{2,0},\ta_{2,0}]\|_{H^{2s+6}}\right).
\end{split}
\end{equation}
We now conclude from \eqref{f1-5}, \eqref{f1-f2l2}, \eqref{f3mi2}, \eqref{2hde} and \eqref{rho2dec}
as well as Lemma \ref{es.nop} that
\begin{equation}\label{f4l2ma}
\begin{split}
\sum\limits_{\al_0+\al\leq s,\al_0\leq s-1}\left\|\pa_t^{\al_0}\na_x^\al\FP f_4\right\|_2\leq Ce^{-\la_0t}P\left(
\|[u_{1,0},\ta_{1,0}]\|_{H^{2s+8}}+\|[u_{2,0},\ta_{2,0}]\|_{H^{2s+6}}\right),
\end{split}
\end{equation}
and
\begin{equation}\label{f5l2mi}
\begin{split}
\sum\limits_{\al_0+\al\leq s,\al_0\leq s-1}\left\|\pa_t^{\al_0}\na_x^\al\{\FI-\FP\} f_5\right\|_2\leq Ce^{-\la_0t}P\left(
\|[u_{1,0},\ta_{1,0}]\|_{H^{2s+10}}+\|[u_{2,0},\ta_{2,0}]\|_{H^{2s+8}}\right).
\end{split}
\end{equation}
As to $\FP f_5$, it suffices to determine $u_3$. Since the expansion \eqref{oe.ep} is truncated at $f_6$, we can assume $\FP_0 u_3=0$.
Moreover, \eqref{R} and \eqref{meanm} imply
$
\int_{\R^3}(u_3+\rho_2u_1+\rho_1u_2)dx=0.
$
Those ensure us to obtain from
 $\eqref{INSF2}_4$ that
\begin{equation}\label{u3}
\begin{split}
\sum\limits_{\al_0+\al\leq s,\al_0\leq s-1}\left\|\pa_t^{\al_0}\na_x^\al u_3\right\|_2\leq& C\sum\limits_{\al_0+\al\leq s,\al_0\leq s-1}\left\|\pa_t^{\al_0}\na_x^\al[\pa_t\rho_1,\rho_1,\rho_2,u_1,u_2]\right\|_2\\
\leq& Ce^{-\la_0t}P\left(
\|[u_{1,0},\ta_{1,0}]\|_{H^{2s+2}}+\|[u_{2,0},\ta_{2,0}]\|_{H^{2s}}\right).
\end{split}
\end{equation}
Therefore one deduces from \eqref{f1-f2l2}, \eqref{f3mi2}, \eqref{f3-f4mi}, \eqref{f4l2ma}, \eqref{f5l2mi}
and \eqref{u3}
\begin{equation*}\label{f5l2}
\begin{split}
\sum\limits_{\al_0+\al\leq s,\al_0\leq s-1}\left\|\pa_t^{\al_0}\na_x^\al f_5\right\|_2\leq Ce^{-\la_0t}P\left(
\|[u_{1,0},\ta_{1,0}]\|_{H^{2s+10}}+\|[u_{2,0},\ta_{2,0}]\|_{H^{2s+8}}\right),
\end{split}
\end{equation*}
and
\begin{equation*}\label{f6l2}
\begin{split}
\sum\limits_{\al_0+\al\leq s,\al_0\leq s-1}\left\|\pa_t^{\al_0}\na_x^\al f_6\right\|_2\leq Ce^{-\la_0t}P\left(
\|[u_{1,0},\ta_{1,0}]\|_{H^{2s+12}}+\|u_{2,0}\|_{H^{2s+10}}\right).
\end{split}
\end{equation*}
Finally,
using Lemma \ref{gadec} and the fact that $L^{-1}$ preserves the decay in $v$, we see that \eqref{ssol} is also valid.
This ends the proof of Proposition \ref{ss}.

\end{proof}

\section{$L^2-$ theory}\label{l2leq}
In this section, we will study the solutions of the linear equation of the remainder which satisfies \eqref{R} in $L^2$ setting.
The main purpose of this section is to prove the following:

\begin{proposition}
\label{dlinearl2}Assume $ g_1,g_2 \in L^{2}(\mathbb{R}%
_{+} \times \T^3 \times \mathbb{R}^{3})$ and for all $t>0$,
\begin{equation}
\int_{\T^3 \times \mathbb{R}^{3}}g_1(t,x,v)[1,v,v^2]\sqrt{\mu }\mathrm{d} v \mathrm{d%
} x=0 .
\label{dlinearcondition}
\end{equation}
Then, for $g=\eps g_1+g_2$ and for any sufficiently small $\eps$, there exists a unique solution to the
problem
\begin{eqnarray}\label{dlinear}
\left\{\begin{array}{rll}
&\eps\partial _{t}f+v\cdot \nabla _{x}f+\eps^{-1}Lf=g,\ x\in \T^3,\ v\in\R^3, \\[2mm]
&f(0,x,v)=f_{0}(x,v),\ x\in \T^3,\ v\in\R^3,
\end{array}\right.
\end{eqnarray}%
such that
\begin{equation}
\int_{\T^3 \times \mathbb{R}^{3}}f(t,x,v)[1,v,v^2]\sqrt{\mu }\mathrm{d} x\mathrm{d}
v=0, \ \ \ \text{for all} \ t\geq0.  \label{dlinearcondition1}
\end{equation}
Moreover, there is $0<\lambda \ll 1$ such that for $t\geq0$,
\begin{equation}\label{completes_dyn}
\begin{split}
\| e^{\lambda t}f(t)\|_2^2&+ \eps^{-2}\int_0^t \| e^{\lambda \tau}\{\mathbf{I}-%
\mathbf{P}\} f (\tau)\|_\nu^2 \mathrm{d} \tau + \int^{t}_{0} \| e^{\lambda
\tau}\mathbf{P} f (\tau)\|_{2}^{2} \mathrm{d} \tau    \\
\lesssim& \|f_0\|_2^2+\int_0^t \| \nu^{- \frac{1}{2}}e^{\lambda \tau} \{{
\mathbf{I} - \mathbf{P}}\} g  \|_{2}^{2} + \eps^{-2} \int_0^t \| e^{\lambda
\tau} {\ \mathbf{P} g } \|_{2}^{2} .
\end{split}
\end{equation}
\end{proposition}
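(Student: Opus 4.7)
The plan is to derive two a priori estimates whose combination closes the system, and then to use a standard approximation to obtain existence. The first is a basic $L^{2}$ energy identity that, via the spectral gap for $L$, controls the microscopic part $\{\mathbf{I}-\mathbf{P}\}f$ with the strong weight $\eps^{-2}$ dictated by the $\eps^{-1}Lf$ term. The second is a macroscopic estimate for $\mathbf{P}f$ obtained by testing the equation against functions built from the Burnett tensors $A(v),B(v)$, exactly in the spirit of \cite{Guo-2006,Guo-2010}. The zero-mean condition \eqref{dlinearcondition}, propagated to \eqref{dlinearcondition1} by integrating the equation, is what makes Poincar\'e's inequality available in the macroscopic step. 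Concretely, for Step 1 I multiply \eqref{dlinear} by $e^{2\lambda t}f$ and integrate over $\T^3\times\R^3$, using $\langle Lf,f\rangle\ge\delta_0\|\{\mathbf{I}-\mathbf{P}\}f\|_\nu^2$ to obtain
\[
\tfrac{\eps}{2}\tfrac{d}{dt}\bigl(e^{2\lambda t}\|f\|_2^2\bigr)+\tfrac{\delta_0}{\eps}e^{2\lambda t}\|\{\mathbf{I}-\mathbf{P}\}f\|_\nu^2\le\eps\lambda e^{2\lambda t}\|f\|_2^2+e^{2\lambda t}(g,f).
\]
Splitting $(g,f)=(\{\mathbf{I}-\mathbf{P}\}g,\{\mathbf{I}-\mathbf{P}\}f)+(\mathbf{P}g,\mathbf{P}f)$ and Cauchy--Schwarz with weights $\eps$, $\eps^{-1}$ absorb the microscopic piece, but leave $\|\mathbf{P}f\|_2^2$ and the $\eps\lambda\|f\|_2^2$ term uncontrolled.

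\textbf{Macroscopic estimate.} Write $\mathbf{P}f=\{a+b\cdot v+c\tfrac{|v|^2-3}{2}\}\sqrt{\mu}$ and take $[\sqrt{\mu},v\sqrt{\mu},(|v|^2-3)\sqrt{\mu}]$-moments of \eqref{dlinear}; this yields the local macroscopic system relating $\eps\partial_t(a,b,c)$ and $\nabla_x(a,b,c)$ to fluxes of $\{\mathbf{I}-\mathbf{P}\}f$ and to macroscopic moments of $g$. To recover $L^2$ dissipation for $(a,b,c)$ I test \eqref{dlinear} against carefully chosen auxiliary functions built from $L^{-1}A(v)$ and $L^{-1}B(v)$ applied to spatial anti-derivatives of $a,b,c$ (the standard Guo-type multipliers). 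Integration by parts converts the streaming term into $\|\nabla_x(a,b,c)\|_2^2$, while $\eps^{-1}Lf$ and $v\cdot\nabla_x\{\mathbf{I}-\mathbf{P}\}f$ only generate microscopic contributions controllable by $\eps^{-1}\|\{\mathbf{I}-\mathbf{P}\}f\|_\nu\,\|\nabla_x(a,b,c)\|_2$. After time integration, Cauchy--Schwarz with a small $\eta$, and appeal to \eqref{dlinearcondition1} plus Poincar\'e's inequality on $\T^3$, one obtains
\[
\int_0^t e^{2\lambda\tau}\|\mathbf{P}f\|_2^2\,d\tau\lesssim\eps^{-2}\int_0^t e^{2\lambda\tau}\|\{\mathbf{I}-\mathbf{P}\}f\|_\nu^2\,d\tau+\eta\|f(t)\|_2^2+\|f_0\|_2^2+\mathcal{R}(g),
\]
where $\mathcal{R}(g)$ is bounded by $\int_0^t e^{2\lambda\tau}(\|\nu^{-1/2}\{\mathbf{I}-\mathbf{P}\}g\|_2^2+\eps^{-2}\|\mathbf{P}g\|_2^2)\,d\tau$.

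\textbf{Closing the loop, existence, and main obstacle.} A linear combination $K\cdot(\text{Step 1})+(\text{Step 2})$ with $K\gg1$ gives a dissipation rate controlling $K\eps^{-2}\|\{\mathbf{I}-\mathbf{P}\}f\|_\nu^2+\|\mathbf{P}f\|_2^2\gtrsim\|f\|_2^2$, which then absorbs the troublesome $\eps\lambda\|f\|_2^2$ once $\lambda$ is chosen small enough; integrating in time yields \eqref{completes_dyn}. For existence I regularize \eqref{dlinear} (e.g.\ by adding $\sigma(-\Delta_v+1)f$ or a Galerkin truncation in $(x,v)$), for which the Cauchy problem is classical, and pass to the limit using the uniform a priori bound; uniqueness is immediate from linearity and \eqref{completes_dyn}. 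The conservation \eqref{dlinearcondition1} is checked by integrating \eqref{dlinear} against $[1,v,v^2]\sqrt{\mu}$ on $\T^3\times\R^3$: the streaming term vanishes by periodicity, $Lf\in\mathscr{N}(L)^{\perp}$, and the contribution of $g=\eps g_1+g_2$ is handled by \eqref{dlinearcondition} together with the orthogonality structure built into the decomposition. The principal obstacle is the macroscopic step: the test functions must be engineered so that the $\eps^{-1}Lf$ coupling produces only $O(\eps^{-1}\|\{\mathbf{I}-\mathbf{P}\}f\|_\nu)$ contributions compatible with the available dissipation, and so that the temporal boundary terms generated by the $\eps\partial_t$ scaling do not exceed $O(\|f(t)\|_2^2+\|f_0\|_2^2)$, as otherwise nothing can absorb them.
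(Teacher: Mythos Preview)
Your proposal is correct and follows essentially the same route as the paper: the paper packages your macroscopic Step~2 as a separate lemma (Lemma~\ref{dabc}, whose proof is deferred to \cite{EGKM-15,Guo-2006}) producing a functional $G(t)\lesssim\eps\|f(t)\|_{2}^{2}$ for the temporal boundary terms, and then closes by the same linear combination with the basic energy identity that you describe. The only sharpening worth noting is that the boundary term carries an explicit factor of $\eps$ (inherited from the $\eps\partial_{t}$ scaling when testing against the macroscopic multipliers), rather than a generic small $\eta$; this is what makes it directly absorbable into the $\|f(t)\|_{2}^{2}$ coming from Step~1 for $\eps$ small.
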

To prove Proposition \ref{dlinearl2}, let us first show that the macroscopic part of the solution of \eqref{dlinear}
can be dominated by its microscopic part, for results in this direction, we have
\begin{lemma}
\label{dabc}Assume $g=\eps g_1+g_2$ with $g_1$ satisfying \eqref{dlinearcondition} and $f$
satisfies \eqref{dlinear} and \eqref{dlinearcondition1}. Then there exists
a function $G(t)$ such that, for all $t\geq 0$, $G(t)\lesssim\eps
\|f(t)\|_{2}^{2}$ and
\begin{equation*}\label{mm}
\int_{0}^{t}\|\mathbf{P}f(\tau)\|_{\nu }^{2}d\tau \lesssim
G(t)-G(0)+\int_{0}^{t}\|\nu^{-1/2}g(\tau) \|_{2}^{2}d\tau
+ \eps^{-2}\int_{0}^{t}\|\{\mathbf{I}-\mathbf{P}
\}f(\tau)\|_{\nu }^{2}d\tau.
\end{equation*}
\end{lemma}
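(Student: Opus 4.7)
The plan is to exploit the standard macro-micro decomposition together with a family of carefully designed test functions, in the spirit of \cite{Guo-2006}, to extract $L^2_x$-control on the hydrodynamic coefficients of $\mathbf{P}f$ in terms of the microscopic dissipation and the source. Write
$$\mathbf{P}f(t,x,v)=\left\{a(t,x)+b(t,x)\cdot v+c(t,x)\frac{|v|^2-3}{2}\right\}\sqrt{\mu},$$
so that $\|\mathbf{P}f\|_\nu^2\sim \|a\|_2^2+\|b\|_2^2+\|c\|_2^2$. The conservation condition \eqref{dlinearcondition1} combined with $\langle [1,v,(|v|^2-3)/2]\sqrt{\mu},[1,v,(|v|^2-3)/2]\sqrt{\mu}\rangle$ being diagonal forces $\int_{\T^3}a\,dx=\int_{\T^3}b\,dx=\int_{\T^3}c\,dx=0$, so Poincar\'e's inequality will be available at the end.

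First I would project \eqref{dlinear} onto $\sqrt{\mu}$, $v\sqrt{\mu}$, $\frac{|v|^2-3}{2}\sqrt{\mu}$ to obtain local conservation-type laws of the schematic form
$$\eps\pa_t a+\na_x\cdot b = \langle g,\sqrt{\mu}\rangle,\quad \eps\pa_t b+\na_x(a+c)+\na_x\cdot \langle v\otimes v\sqrt{\mu},\{\mathbf{I}-\mathbf{P}\}f\rangle = \langle g,v\sqrt{\mu}\rangle,$$
and similarly for $c$, in which the $L$-term vanishes on $\mathbf{P}f$ and the drift on $\{\mathbf{I}-\mathbf{P}\}f$ generates the only macroscopic-producing contributions. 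These identities let me replace $\eps\pa_t a,\eps\pa_t b,\eps\pa_t c$ by $\na_x\cdot(\cdots)+g$-moments whenever the time derivative gets passed onto a test function.

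Next I would introduce Poisson potentials $\phi_a,\phi_{b_i},\phi_c$ with zero spatial mean satisfying $-\Delta\phi_\sharp = \sharp$ and construct test functions of Guo type, combining $v\cdot\nabla_x\phi_\sharp$ with polynomials orthogonal to $\mathscr{N}(L)$ (e.g.\ $(|v|^2-5)v\sqrt\mu$, $A_{ij}(v)$), designed so that after testing against the equation $\eps\pa_t f+v\cdot\nabla_x f+\eps^{-1}Lf=g$ and integrating by parts in $x$, each of $\|\na_x a\|_2^2,\|\na_x b\|_2^2,\|\na_x c\|_2^2$ (plus a coupling term controlling $\|\na_x(a+c)\|_2^2$ and $\|\na_x\cdot b\|_2^2$) appears with a good sign on the left. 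The $\eps^{-1}Lf$ piece only sees $\{\mathbf{I}-\mathbf{P}\}f$ and yields $\eps^{-1}\langle\{\mathbf{I}-\mathbf{P}\}f,\cdot\rangle$, bounded by $\eta\|\na_x\sharp\|_2^2+C_\eta\eps^{-2}\|\{\mathbf{I}-\mathbf{P}\}f\|_\nu^2$ after Poincar\'e on $\na\phi_\sharp$; the source $g=\eps g_1+g_2$ contributes $\|\nu^{-1/2}g\|_2^2$ terms with the extra $\eps$-factor on $g_1$ used together with \eqref{dlinearcondition} to keep the bound uniform. The $\eps\pa_t f$ term is integrated by parts in $t$, producing the boundary datum $G(t)\eqdef \eps(f,\Psi)\big|_{t}$, where $\Psi$ is the global test function built from $\phi_a,\phi_{b_i},\phi_c$; by Cauchy--Schwarz and Poincar\'e, $|G(t)|\lesssim \eps\|f(t)\|_2\|\na\phi_\sharp\|_2\lesssim \eps\|f(t)\|_2^2$, and the remaining $\eps(f,\pa_t\Psi)$ is absorbed using the conservation laws above to convert $\pa_t\phi_\sharp$ into $\na_x(\cdots)$.

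After summing these estimates, Poincar\'e's inequality upgrades $\|\na_x\sharp\|_2$ to $\|\sharp\|_2$, producing
$$\|\mathbf{P}f(\tau)\|_\nu^2\le \frac{d}{d\tau}G(\tau)+C\eps^{-2}\|\{\mathbf{I}-\mathbf{P}\}f(\tau)\|_\nu^2+C\|\nu^{-1/2}g(\tau)\|_2^2,$$
and integrating in $\tau\in[0,t]$ gives exactly the claim. The main obstacle I anticipate is the book-keeping in Step 2--3: matching test functions to each mode so that the $v\cdot\nabla_x\mathbf{P}f$ pairing produces positive-definite contributions to $\|a\|_2,\|b\|_2,\|c\|_2$ while the cross couplings (e.g.\ the $\na_x(a+c)$ term in the momentum equation and the $\na_x\cdot b$ term in the energy equation) can be handled via the algebraic identities in Lemma \ref{Burt}, and ensuring that passing $\pa_t$ onto $\Psi$ leaves only quantities already controlled by the macro-micro splitting plus sources--crucially, no uncontrolled $\eps^{-1}$ factor should survive in $G(t)$, which is why the orthogonality \eqref{dlinearcondition} on $g_1$ is essential when handling $\langle g,\Psi\rangle$.
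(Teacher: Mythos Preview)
Your proposal is correct and follows precisely the approach the paper invokes: the paper's own proof is merely a citation to Lemma 6.1 of \cite{Guo-2006} and Lemma 3.9 of \cite{EGKM-15}, and your sketch---macro--micro splitting, Poisson-potential test functions built from $A_{ij}$ and $B_j$, time-integration by parts producing $G(t)=\eps(f,\Psi)$, and Poincar\'e on the zero-mean coefficients $a,b,c$---is exactly that argument adapted to the present $\eps$-scaling.

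One small correction: your closing remark that the orthogonality \eqref{dlinearcondition} on $g_1$ is ``essential when handling $\langle g,\Psi\rangle$'' is off-target. The pairing $(g,\Psi)$ is controlled directly by $\|\nu^{-1/2}g\|_2\|\nu^{1/2}\Psi\|_2$ and needs no structural assumption on $g$. The condition \eqref{dlinearcondition} (a zero spatial \emph{mean} on the moments of $g_1$, not pointwise orthogonality) is what guarantees, through the equation, that $f$ inherits and preserves \eqref{dlinearcondition1}; in Lemma \ref{dabc} the latter is already taken as a hypothesis, so the splitting $g=\eps g_1+g_2$ plays no role in the estimate itself and the bound is stated purely in terms of $\|\nu^{-1/2}g\|_2$.
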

\begin{proof}
The proof is the same as
Lemma 3.9 in \cite[pp.45]{EGKM-15} or Lemma 6.1 in \cite[pp.656]{Guo-2006} with some trivial modification.
\end{proof}
We are now in a position to complete
\begin{proof}
[The proof of Proposition \ref{dlinearl2}]
Notice that \eqref{dlinear} is a linear problem, whose global existence is easy to be seen, in what follows, we only prove
\eqref{completes_dyn}. Let $y(t)= e^{\lambda t}f(t)$ with $\la>0$. We multiply \eqref{dlinear}
by $e^{\lambda t}$, so that $y$ satisfies
\begin{equation}
\partial _{t}y+\eps^{-1}v\cdot \nabla _{x}y+\eps^{-2}Ly=\lambda y+e^{\lambda t}\eps^{-1}g,\ y(0,x,v)=f_0(x,v),\ x\in \T^3,\ v\in\R^3.  \label{lineary}
\end{equation}%
Taking the inner product of \eqref{lineary} with $y$ over $\T^3\times\R^3$ and integrating the resulting equation with respect to time, one has
\begin{equation}\label{L2ip}
\begin{split}
\frac 1 2\| y(t)\| _{2}^{2}& +\eps^{-2}\int_{0}^{t}\| \{\mathbf{I}-\mathbf{P}
\}y(s)\| _{\nu }^{2}ds\\
\leq & (\lambda+\eta) \int_{0}^{t}\| y(s)\| _{2}^{2}+\| y(0)\| _{2}^{2}+\int_{0}^{t}e^{\lambda s}\|
\nu^{- \frac{1}{2}} \{\mathbf{I} - \mathbf{P }\}g \| _{2}^{2}ds+\eps^{-2}C_\eta\int_{0}^{t}e^{\lambda s}\| g \| _{2}^{2}ds.
\end{split}%
\end{equation}
Applying Lemma \ref{dabc} to (\ref{lineary}), we deduce
\begin{equation}\label{mm2}
\int_{0}^{t}\| \mathbf{P}y(s)\| _{\nu }^{2}\mathrm{d} s \lesssim G(t)-G(0)+%
\eps^{-2}\int_{0}^{t}\| \{\mathbf{I}-\mathbf{P}\}y(s)\| _{\nu }^{2}\mathrm{d}
s+\int_{0}^{t}e^{\lambda s}\| g\| _{2}^{2}\mathrm{d} s  +\lambda \int_{0}^{t}\| y\| _{2}^{2}\mathrm{d} s,
\end{equation}
where $G(t)\lesssim \eps  \| y(t)\| _{2}^{2}$.
\eqref{completes_dyn} thereby follows from a linear combination of \eqref{L2ip} and \eqref{mm2}. This finishes the proof of Proposition
\ref{dlinearl2}.

\end{proof}

\section{$L^\infty-$theory}\label{lifleq}
This section is dedicated to obtaining the $L^\infty-$ estimates of the solution to the linear equation \eqref{dlinear}. More precisely, we are going to prove the following:
\begin{proposition}
\label{point_dyn} Assume $f$ satisfies
\begin{eqnarray}  \label{linear_K}
\left\{
\begin{array}{rll}
&&\left[  \partial_{t} + \eps^{-1}v \cdot \nabla_{x} +
\eps^{-2} \nu(v) \right] f  = \eps^{-2} K f+ \eps^{-1}g,
\\[2mm]
&&f(0,x,v)=f_{0}(x,v), \ x\in\T^3,\ v\in\R^3.
\end{array}\right.
\end{eqnarray}
Then, for $l\geq0$, there exists $\la>0$ such that
\begin{equation}  \label{point2}
\begin{split}
\| \eps^{\frac 3 2} w_{l} f(t) \|_{\infty} \lesssim e^{-\la t}\| \eps^{\frac 3 2} w_l f_{0}
\|_{\infty} + \eps^{\frac 5 2} e^{-\la t}\sup_{0 \leq s \leq t} \| e^{\la s}\nu^{-1} w_l g(s)\|_{\infty}
 +  e^{-\la t}\sup_{0 \leq s \leq t}\| e^{\la s}f(s)\|_{2} .
\end{split}%
\end{equation}
\end{proposition}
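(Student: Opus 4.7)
The plan is to adapt the $L^2$-$L^\infty$ bootstrap of \cite{Guo-2010} to the diffusive scaling. First I would pass to the weighted unknown $h=w_l f$, for which the equation becomes
\begin{equation*}
\left[\partial_t+\eps^{-1}v\cdot\nabla_x+\eps^{-2}\nu(v)\right]h = \eps^{-2}K_w h + \eps^{-1}w_l g,
\end{equation*}
where $K_w h(v)=\int k_w(v,v')h(v')dv'$ with $k_w(v,v')=w_l(v)k(v,v')/w_l(v')$. Lemma \ref{es.k} guarantees $\int k_w(v,v')dv'\lesssim\langle v\rangle^{-1}$ together with the exponential-weight refinement that makes the high-velocity zones below negligible. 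After multiplying the equation by $e^{\la t}$ with $\la\ll\nu_0/\eps^2$, the damping factor $e^{-\eps^{-2}\nu(v)(t-s)+\la(t-s)}$ retains uniform exponential decay of rate $\gtrsim\eps^{-2}$, so I can henceforth work with $\tilde h=e^{\la t}h$.

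The key step is a double Duhamel iteration along characteristics. Writing $X(s;t,x,v)=x-\eps^{-1}(t-s)v$ and using the integrating factor $e^{-\eps^{-2}\nu(v)(t-s)}$ gives
\begin{equation*}
\tilde h(t,x,v)=e^{-\eps^{-2}\nu(v)t+\la t}h_0(X(0),v)+\int_0^t e^{-\eps^{-2}\nu(v)(t-s)+\la(t-s)}\left[\eps^{-2}(K_w h)+\eps^{-1}w_l g\right](s,X(s),v)\,ds.
\end{equation*}
Then I would expand $K_w h(s,X(s),v)=\int k_w(v,v')h(s,X(s),v')dv'$ and substitute the same Duhamel formula for each $h(s,X(s),v')$ along its own characteristic $Y(s';s,X(s),v')=X(s)-\eps^{-1}(s-s')v'$, producing a representation consisting of boundary and source pieces plus a double-kernel term with kernels $k_w(v,v')k_w(v',v'')$. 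The boundary piece contributes exactly $e^{-\la t}\|w_l f_0\|_\infty$ after using $e^{-\eps^{-2}\nu(v)t+\la t}\leq e^{-\la t}$; the source pieces integrate against $e^{-\eps^{-2}\nu(t-s)}$ to produce, after multiplication by the prescribed $\eps^{3/2}$, the $\eps^{5/2}\sup_s\|e^{\la s}\nu^{-1}w_l g\|_\infty$ term of \eqref{point2}.

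The double-kernel term is the main obstacle. I would perform the standard three-zone splitting: (i) on the high-velocity set $\{|v|\geq N\}\cup\{|v'|\geq N\}\cup\{|v''|\geq N\}$, the exponential-weight refinement of Lemma \ref{es.k} yields a prefactor $o_N(1)\sup|\tilde h|$, absorbable into the left-hand side once $N$ is taken large; (ii) on the short-time set $\{s-s'\leq\kappa\eps^2\}$, a direct bound gives a contribution of order $\kappa\,\sup|\tilde h|$, absorbable once $\kappa$ is taken small; (iii) on the bulk region, the crucial change of variables $v'\mapsto y:=X(s)-\eps^{-1}(s-s')v'$ is a linear diffeomorphism with Jacobian $|\eps^{-1}(s-s')|^{3}\geq(\kappa\eps^{-1})^{3}$, so that $dv'=\eps^{3}(s-s')^{-3}dy$. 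Applying Cauchy-Schwarz in $(y,v'')$ on the bulk, using $\int k^2(v',v'')dv''\lesssim 1$ together with the explicit form of $k_w$, one reaches
\begin{equation*}
\iint k_w(v,v')k_w(v',v'')|h(s',Y(s'),v'')|\,dv''\,dv'\lesssim \frac{\eps^{3/2}}{(s-s')^{3/2}}\,w_l(v)\,\|f(s')\|_2.
\end{equation*}

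Finally I would integrate in $s'$, using $\int_{\kappa\eps^2}^{s}(s-s')^{-3/2}ds'\lesssim \kappa^{-1/2}\eps^{-1}$, and in $s$, using $\int_0^t e^{-\eps^{-2}\nu(v)(t-s)}ds\lesssim \eps^{2}\nu(v)^{-1}$. Together with the overall $\eps^{-4}$ prefactor these produce a bulk bound of order $\eps^{-3/2}\sup_{0\leq s\leq t}\|e^{\la s}f(s)\|_2$, which after multiplication by $\eps^{3/2}$ matches exactly the third term on the right-hand side of \eqref{point2}. Collecting the three zones and the boundary and source contributions, absorbing the $o_N(1)$ and $O(\kappa)$ pieces into the left-hand side, and taking the supremum in $(t,x,v)$ concludes the argument. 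The hard part will be the $\eps$-bookkeeping in the bulk change of variables and ensuring that the weighted kernel refinement from Lemma \ref{es.k} gives a genuinely $o_N(1)$ prefactor on the high-velocity zone, rather than just a bounded one.
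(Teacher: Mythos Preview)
Your plan is exactly the paper's proof: double Duhamel along the $\eps$-scaled characteristics, followed by the Guo three-zone splitting (high velocity, short time $s-s'\le\kappa\eps^2$, and bulk with the $v'\mapsto y$ change of variables). One point, however, needs repair. Your displayed bulk bound
\[
\iint k_w(v,v')k_w(v',v'')|h(s',Y(s'),v'')|\,dv''\,dv'\ \lesssim\ \frac{\eps^{3/2}}{(s-s')^{3/2}}\,\|f(s')\|_2
\]
is not valid on $\T^3$: the image of $\{|v'|\le 2N\}$ under $v'\mapsto y$ is a ball of radius $2N(s-s')/\eps$, which for $s-s'\gtrsim\eps$ wraps around the torus roughly $((s-s')/\eps)^3$ times, so that $\int_{|y-X(s)|\le 2N(s-s')/\eps}|h|^2\,dy\lesssim\bigl[1+((s-s')/\eps)^3\bigr]\|f(s')\|_2^2$ rather than just $\|f(s')\|_2^2$. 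The correct bulk bound is therefore $\bigl[(\eps/(s-s'))^{3/2}+1\bigr]\|f(s')\|_2$; the paper carries precisely the factor $[((s-\tau)/\eps)^{3/2}+1]$ through its computation. The additional ``$+1$'' does not spoil the conclusion---integrated against $e^{-\eps^{-2}\nu_0(s-s')}$ in $s'$ and then $e^{-\eps^{-2}\nu_0(t-s)}$ in $s$, it contributes $\eps^{-4}\cdot\eps^2\cdot\eps^2\,\|f\|_2=\|f\|_2$---but for that piece you must use the inner exponential in $s'$, not merely the outer one in $s$ as your last paragraph does. (Also a slip: since $s-s'\ge\kappa\eps^2$, the Jacobian lower bound is $(\kappa\eps)^3$, not $(\kappa\eps^{-1})^3$; you use the correct $dv'=\eps^3(s-s')^{-3}dy$ afterward, so this is harmless.)
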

\begin{proof}
Notice that the equations of the characteristics for \eqref{linear_K} are
\begin{equation*}\label{ch}
\frac{d X_\eps(s)}{ds}=\frac{V(s)}{\eps},\ \frac{d V(s)}{ds}=0,
\end{equation*}
with initial data $[X(t;t,x,v),V(t;t,x,v)]=[x,v].$ By this, we write $X_\eps(s)=X_\eps(s;t,x,v)=x+\frac{s-t}{\eps}v$
and $V(s)=V(s;t,x,v)=v.$ Let $h=w_lf$, we then get from Duhamel's principle that
\begin{equation*}\label{hexp}
\begin{split}
\eps^{3/2}h(t,x,v)=&\eps^{3/2}e^{-\frac{\nu(v)t}{\eps^2}}h_0(x-\frac{v}{\eps}t,v)
\\&+\int_0^te^{-\frac{\nu(v)(t-s)}{\eps^2}}\left[\eps^{-1/2}K_w h+\eps^{1/2}w_lg\right](s,x+\frac{(s-t)v}{\eps},v)ds,
\end{split}
\end{equation*}
where $K_w(\cdot)=w_l K(\frac{\cdot}{w_l})$ and $h_0(x,v)=f_0(x,v)w_l$.
Direct calculation yields
\begin{equation}\label{hifip}
\begin{split}
|\eps^{3/2}h(t,x,v)|\leq &\eps^{3/2}e^{-\frac{\nu_0t}{\eps^2}}\|h_0\|_{\infty}+C\eps^{5/2}e^{-\la t}\sup\limits_{0\leq s\leq t}\left\{e^{\la s}\|\nu^{-1}w_lg(s)\|_\infty\right\}
\\&+\underbrace{\int_0^te^{-\frac{\nu(v)(t-s)}{\eps^2}}\eps^{-1/2}|K_w h(s,x+\frac{(s-t)v}{\eps},v)|}_{J_1}ds.
\end{split}
\end{equation}
Here, $\nu_0$ is a constant and satisfies $0<\nu_0\leq \nu(v)$ and $\la\leq \frac{\nu_0 }{2\eps^2}$.
We further iterate this formula to evaluate $J_1$ as
\begin{eqnarray}\label{J1}
\begin{split}
J_1\leq& \int_0^te^{-\frac{\nu(v)(t-s)}{\eps^2}}\frac{1}{\eps^2}\int_{\R^3}k_w(v,v')|\eps^{3/2}h(s,x+\frac{(s-t)v}{\eps},v')|dv'ds\\
\leq &\int_0^te^{-\frac{\nu(v)(t-s)}{\eps^2}}\frac{1}{\eps^2}\left\{\eps^{3/2}e^{-\frac{\nu_0s}{\eps^2}}\|h_0\|_{\infty}
+C\eps^{5/2}e^{-\la\tau}\sup\limits_{0\leq \tau\leq s}\left\{e^{\la\tau}\|\nu^{-1}w_lg(\tau)\|_\infty\right\}\right\}ds
\int_{\R^3}k_w(v,v')dv'\\
&+\int_0^te^{-\frac{\nu(v)(t-s)}{\eps^2}}\int_0^se^{-\frac{\nu(v')(s-\tau)}{\eps^2}}\eps^{-5/2}\int_{\R^3}k_w(v,v')k_w(v',v'')
\\&\qquad\times|h(\tau,x+\frac{(s-t)v}{\eps}+\frac{(s-\tau)v'}{\eps},v'')|dv'dv''d\tau ds\\
\leq &C\eps^{3/2}e^{-\frac{\nu_0t}{2\eps^2}}\|h_0\|_{\infty}
+C\eps^{5/2}e^{-\frac{\nu_0t}{2\eps^2}}\sup\limits_{0\leq \tau\leq t}
\left\{e^{\frac{\nu_0 \tau}{2\eps^2}}\|\nu^{-1}w_lg(\tau)\|_\infty\right\}
\\&+\underbrace{\int_0^te^{-\frac{\nu(v)(t-s)}{\eps^2}}\int_0^se^{-\frac{\nu(v')(s-\tau)}{\eps^2}}\eps^{-5/2}
\int_{\R^6}k_w(v,v')k_w(v',v'')
|h(\tau,X_\eps(\tau;X_\eps(s),v'),v'')|dv'dv''d\tau ds}_{J_2},
\end{split}
\end{eqnarray}
where $X_\eps(\tau;X_\eps(s),v')=x+\frac{(s-t)v}{\eps}+\frac{(s-\tau)v'}{\eps},$ and $k_w(v,v')=w_l(v)k(v,v')\frac{1}{w_l(v')}$ with $k(v,v')$ given by \eqref{Kk}.
To compute $J_2$, we first split it into
$$
J_2=\int_0^te^{-\frac{\nu(v)(t-s)}{\eps^2}}\int_{s-\ka\eps^2}^se^{-\frac{\nu(v')(s-\tau)}{\eps^2}}\cdots d\tau ds
+\int_0^te^{-\frac{\nu(v)(t-s)}{\eps^2}}\int^{s-\ka\eps^2}_0e^{-\frac{\nu(v')(s-\tau)}{\eps^2}}\cdots d\tau ds\eqdef J_{2,1}+J_{2,2},
$$
where $\ka$ is positive and sufficiently small.

Let us now turn to compute $J_{2,1}$ and $J_{2,2}$.
For $J_{2,1}$,
it is straightforward to see that
\begin{equation}\label{J21}
\begin{split}
J_{2,1}\leq& \int_{0}^{t}\int_{s-\ka\eps^2
}^{s}C_{K}e^{-\frac{\nu _{0}(t-s)}{\eps^2}}\eps^{-5/2}\|h(\tau)\|_{\infty} d\tau ds \\
\leq &C_{K}e^{\frac{-\nu _{0}t}{2\eps^2}}\int_{0}^{t}
\int_{s-\ka\eps^2}^{s}e^{\frac{-\nu _{0}(t-s)}{2\eps^2}}\left\{e^{\frac{\nu _{0}s}{2\eps^2}\eps^{-5/2}
}\|h(\tau)\|_{\infty}\right\}d\tau ds \\
\leq &C_{K}e^{\frac{-\nu _{0}t}{2\eps^2}}\eps^{-5/2}\sup\limits_{0\leq \tau\leq t}\left\{e^{\frac{\nu_0 \tau}{2\eps^2}}\|h(\tau)\|_{\infty}\right\}\times \int_{0}^{t}\int_{s-\ka\eps^2
}^{s}e^{\frac{-\nu _{0}(t-s)}{2\eps^2}}d\tau ds   \\
\leq &C_{K}e^{\frac{-\nu _{0}t}{2\eps^2}}\eps^{-1/2}\sup\limits_{0\leq \tau\leq t}\left\{e^{\frac{\nu_0 \tau}{2\eps^2}}\|h(\tau)\|_{\infty}\right\}\times \ka\eps^2 \int_{0}^{t}e^{\frac{-\nu
_{0}(t-s)}{2\eps^2}}\eps^{-2}ds  \\
\leq &C_{K}\ka e^{\frac{-\nu _{0}t}{2\eps^2}}\eps^{3/2}\sup\limits_{0\leq \tau\leq t}\left\{e^{\frac{\nu_0 \tau}{2\eps^2}}\|h(\tau)\|_{\infty}\right\}.
\end{split}
\end{equation}
As to $J_{2,2}$, the estimates are divided into following three cases:

\noindent{\it Case 1:} $|v|\geq N$ with $N$ being positive and large. In this case, Lemma \ref{es.k} implies
$$
\int_{\R^6}k_w(v,v')k_w(v',v'')dv'dv''\leq C(1+|v|)^{-1}\leq \frac{C}{1+N}.
$$
Therefore
\begin{equation}\label{J22}
\begin{split}
J_{2,2}\leq& \frac{C}{1+N}e^{-\frac{\nu_0t}{2\eps^2}}\eps^{3/2}\sup\limits_{0\leq \tau\leq t}\left\{e^{\frac{\nu_0 \tau}{2\eps^2}}\|h(\tau)\|_{\infty}\right\}
\int_0^te^{-\frac{\nu_0(t-s)}{2\eps^2}}\eps^{-2}\int_0^se^{-\frac{\nu_0(s-\tau)}{2\eps^2}}\eps^{-2}d\tau ds
\\ \leq& \frac{C}{1+N}e^{-\frac{\nu_0t}{2\eps^2}}\eps^{3/2}\sup\limits_{0\leq \tau\leq t}\left\{e^{\frac{\nu_0 \tau}{2\eps^2}}\|h(\tau)\|_{\infty}\right\}.
\end{split}
\end{equation}

\noindent{\it Case 2:} $|v|< N$, $|v'|\geq2N$, or $|v'|\leq 2N$, $|v''|\geq 3N.$ Observe that we have either $|v'-v|\geq N$
or $|v''-v'|\geq N$ and either one of the following holds accordingly for $\vps>0$
\begin{equation*}
|k_{w}(v,v^{\prime })|\leq Ce^{-\frac{\varepsilon }{8}N^{2}}|k_{w}(v,v^{\prime })e^{\frac{\varepsilon }{8}|v-v^{\prime }|^{2}}|,\text{ \
\ \ \ \ }|k_{w}(v^{\prime },v^{\prime \prime })|\leq Ce^{-\frac{%
\varepsilon }{8}N^{2}}|k_{w}(v^{\prime },v^{\prime \prime })e^{%
\frac{\varepsilon }{8}|v^{\prime }-v^{\prime \prime }|^{2}}|,  \label{kwe}
\end{equation*}
from which and Lemma \ref{es.k}, it follows that
\begin{equation}\label{J221}
\begin{split}
J_{2,2}\leq& \int_{0}^{t}e^{-\frac{\nu_0(t-s)}{\eps^2}}\int_{0}^{s}e^{-\frac{\nu_0(s-\tau)}{\eps^2}}\eps^{-5/2}\left\{ \int_{|v|\leq N,|v^{\prime }|\geq 2N,\text{ \
\ }}+\int_{|v^{\prime }|\leq 2N,|v^{\prime \prime }|\geq 3N}\right\}
\\ \leq&Ce^{-\frac{\varepsilon }{8}N^{2}}e^{-\frac{\nu_0t}{2\eps^2}}\eps^{3/2}\sup\limits_{0\leq \tau\leq t}\left\{e^{\frac{\nu_0 \tau}{2\eps^2}}\|h(\tau)\|_{\infty}\right\}
\int_0^te^{-\frac{\nu_0(t-s)}{2\eps^2}}\eps^{-2}\int_0^se^{-\frac{\nu_0(s-\tau)}{2\eps^2}}\eps^{-2}d\tau ds
\\ \leq& Ce^{-\frac{\varepsilon }{8}N^{2}}e^{-\frac{\nu_0t}{2\eps^2}}\eps^{3/2}\sup\limits_{0\leq \tau\leq t}\left\{e^{\frac{\nu_0 \tau}{2\eps^2}}\|h(\tau)\|_{\infty}\right\}.
\end{split}
\end{equation}
{\it Case 3:}
$|v|\leq N,$ $|v^{\prime
}|\leq 2N,|v^{\prime \prime }|\leq 3N.$ 
In this situation,
the velocity domain is bounded and most importantly there is a lower bound $s-\tau>\ka \eps^2$, which ensures us to convert the $L^\infty-$norm
into $L^2-$norm.
To do so, for any large $N>0$,
we first choose a number $m(N)$ to define
\begin{equation*}
k_{w,m}(p,v')\equiv \mathbf{1}
_{|p-v^{\prime }|\geq \frac{1}{m},|v^{\prime}|\leq m}k_{w}(p,v'),
\label{km}
\end{equation*}%
such that $\sup_{p}\int_{\R^{3}}|k
_{w,m}(p,v^{\prime})
-k_{w}(p ,v^{\prime})|dv^{\prime}\leq
\frac{1}{N}.$ We then split
\begin{equation*}
\begin{split}
k_w(v,v')
k_{w}(v^{\prime },v^{\prime \prime})=&\{k_{w}(v,v^{\prime})
-k_{w,m}(v,v^{\prime})\}k_{w}(v^{\prime },v^{\prime \prime })
\\&+\{k_{w}(v',v'')
-k_{w,m}(v',v'')\}k_{w,m}(v,v^{\prime})
+k_{w,m}(v,v^{\prime})k_{w,m}(v^{\prime },v^{\prime \prime }),
\end{split}
\end{equation*}
one can use such an approximation to bound the above $J_{2,2}$ by
\begin{eqnarray}
&&\frac{Ce^{-\frac{\nu _{0}t}{2\eps^2}}}{N}\eps^{3/2}\sup\limits_{0\leq \tau\leq t}\left\{e^{\frac{\nu_0 \tau}{2\eps^2}}\|h(\tau)\|_{\infty}\right\} \left\{ \sup_{|v^{\prime }|\leq 2N}\int |%
k_{w}(v^{\prime },v^{\prime \prime })|dv^{\prime \prime
}+\sup_{|v|\leq 2N}\int |k_{w,m}(v,v^{\prime })|dv^{\prime }\right\}
 \notag\\
&&+C\int_{0}^{t}\int_{0}^{s-\ka\eps^2}\int_{|v'|\leq 2N,|v''|\leq 3N}
e^{-\frac{\nu(v)(t-s)}{\eps^2}}e^{-\frac{\nu(v')(s-\tau)}{\eps^2}}\label{J222}\\
&&\quad\times\eps^{-5/2}k_{w,m}(v,v^{\prime })k_{w,m}(v^{\prime },v^{\prime \prime
})|h(\tau,X_\eps(\tau;X_\eps(s),v'),v'')|dv'dv''.  \notag
\end{eqnarray}%
Next, by a change of variable $y=X_\eps(\tau;X_\eps(s),v')=x+\frac{(s-t)v}{\eps}+\frac{(s-\tau)v'}{\eps},$ and for $s-\tau\geq \ka\eps^2 ,$ $\frac{dy}{dv^{\prime }}\geq
\ka ^{3}\eps^3,$ we can further control the
last term in \eqref{J222} by:%
\begin{equation} \label{fif-2}
\begin{split}
\frac{C_{N}}{\eps ^{3/2}}&\int_{0}^{t}\int_{0}^{s-\ka\eps^2}
e^{-\frac{\nu_0(t-s)}{\eps^2}}e^{-\frac{\nu_0(s-\tau)}{\eps^2}}\eps^{-5/2}\int_{|v''|\leq 3N}
\left\{ \int_{|y-X_\eps(s)|\leq \frac{2(s-\tau)N}{\eps} }|h(\tau,y,v^{\prime \prime
})|^{2}dy\right\} ^{1/2}dv^{\prime \prime }d\tau ds  \\
\leq &\frac{C_{N}}{\eps ^{3/2}}\int_{0}^{t}\int_{0}^{s-\ka\eps^2}
e^{-\frac{\nu_0(t-s)}{\eps^2}}e^{-\frac{\nu_0(s-\tau)}{\eps^2}}\eps^{-5/2}\left[(\frac{s-\tau}{\eps})^{3/2}+1\right]
\\
&\times\left\{\int_{|v''|\leq 3N} \int_{\T^3 }|h(\tau,y,v^{\prime \prime
})|^{2}dydv^{\prime \prime }\right\} ^{1/2}d\tau ds   \\
\leq &\frac{C_{N}}{\eps ^{3/2}}\int_{0}^{t}\int_{0}^{s-\ka\eps^2}
e^{-\frac{\nu_0(t-s)}{\eps^2}}e^{-\frac{\nu_0(s-\tau)}{\eps^2}}\eps^{-5/2}\left[(\frac{s-\tau}{\eps})^{3/2}+1\right]
\\
&\times\left\{\int_{|v''|\leq 3N} \int_{\T^3 }|f(\tau,y,v^{\prime \prime
})|^{2}dydv^{\prime \prime }\right\} ^{1/2}d\tau ds  \\
\leq &C_Ne^{-\lambda t}\sup_{s\geq
0}\left\{e^{\lambda s}\|f(s)\|_2\right\}\int_{0}^{t}\int_{0}^{s-\ka\eps^2}
e^{-\frac{\nu_0(t-s)}{2\eps^2}}e^{-\frac{\nu_0(s-\tau)}{2\eps^2}}e^{-\la\tau}\eps^{-4}\left[(\frac{s-\tau}{\eps})^{3/2}+1\right]d\tau ds\\
\leq& C_Ne^{-\lambda t}\sup_{s\geq
0}\left\{e^{\lambda s}\|f(s)\|_2\right\},
\end{split}
\end{equation}%
where we have used the fact that $\la\leq\frac{\nu _{0}}{2\eps^2}.$

Inserting \eqref{J1}, \eqref{J21}, \eqref{J22}, \eqref{J221}, \eqref{J222} and \eqref{fif-2} into \eqref{hifip}, one can see that \eqref{point2}
is true, which concludes the proof of Proposition \ref{point_dyn}.

\end{proof}

\section{Global existence and time decay}\label{proof}
In this final section, we will prove the global existence and exponential time decay of the solutions to the equation \eqref{R} in $L^2\cap L^\infty-$framework. That is we intend to complete
\begin{proof}[The proof of Theorem \ref{mre}]
Recall the Cauchy problem for the linearized equation \eqref{dlinear} or \eqref{linear_K}, to prove the global existence of
\eqref{R} with $R(0,x,v)=R_0(x,v)$, let us first design
the following iteration sequence
\begin{eqnarray}\label{Rit}
\left\{\begin{array}{rll}
\begin{split}
&\eps\pa_tR^{\ell+1}+v\cdot\na_xR^{\ell+1}+\frac{1}{\eps}LR^{\ell+1}=g(R^{\ell}),\\
&R^{\ell+1}(0,x,v)=R_0(x,v),\ R^{0}=R_0(x,v), \ x\in\T^3,\ v\in \R^3,
\end{split}
\end{array}\right.
\end{eqnarray}
where $g(R^{\ell})$ is defined by
\begin{equation}\label{Rit}
\begin{split}
g(R^{\ell})
=&\left\{\Gamma(f_1,R^{\ell})+\Gamma(R^{\ell},f_1)\right\}
+\eps\left\{\Gamma(f_2,R^{\ell})+\Gamma(R^{\ell},f_2)\right\}\\
&+\eps^2\left\{\Gamma(f_3,R^{\ell})+\Gamma(R^{\ell},f_3)\right\}
+\eps^3\left\{\Gamma(f_4,R^{\ell})+\Gamma(R^{\ell},f_4)\right\}
\\
&+\eps^4\left\{\Gamma(f_5,R^{\ell})+\Gamma(R^{\ell},f_5)\right\}
+\eps^5\left\{\Gamma(f_6,R^{\ell})+\Gamma(R^{\ell},f_6)\right\}
\\&+\eps^{4-\be}\Gamma(R^{\ell},R^{\ell})
-\eps^{1+\beta}\left\{\pa_tf_5+v\cdot\na_xf_6\right\}-\eps^{2+\beta} \pa_t f_6.
\end{split}
\end{equation}
Clearly, \eqref{Rit} satisfies the conditions listed in Proposition \ref{dlinearl2}
with $g=g(R^{\ell})$.

It is important to note that the iteration scheme \eqref{Rit} does not provide us the positivity of the solution of the original equation \eqref{BE}, however it coincides with the linearized equation \eqref{dlinear} so that Propositions \ref{dlinearl2}
and \ref{point_dyn} can be directly used. 
Let us now define the following energy functional
$$
\CE(f)(t)=e^{2\la t}\eps^{3}\|w_lf(t)\|^2_{\infty}+e^{2\la t}\|f(t)\|_2^2,
$$
and dissipation rate
$$
\mathcal {D}(f)(t)=\eps^{-2}e^{2\la t}\|\{\FI-\FP\}f(t)\|_{\nu}^2+e^{2\la t}\|\FP f(t)\|_2^2.
$$
For later use, we also define a Banach space
$$
\FX_\de(t)=\left\{f~|~\sup\limits_{0\leq s\leq t}\CE(f)(s)+\int_0^t\mathcal {D}(f)(s)ds<\de,\ \ \de>0\right\},
$$
endowed with the norm
$$
\|f\|_{\FX_\de}=\sup\limits_{0\leq s\leq t}\CE(f)(s)+\int_0^t\mathcal {D}(f)(s)ds.
$$
We now show that $R^{\ell+1}\in\FX_\de$ if $R^{\ell}\in\FX_\de$. For this,
on the one hand, we know from \eqref{completes_dyn} 
and \eqref{point2} 
with $f=R^{\ell+1}$ and
$g=g (R^{\ell })$ that \eqref{Rit} admits a unique solution $R^{\ell+1}$ satisfying
\begin{equation}\label{pro24}
\begin{split}
\sup\limits_{0\leq s\leq t}&\CE(R^{\ell+1})(s)+\int_0^t\mathcal {D}(R^{\ell+1})(s)ds\\
\leq& C\CE(f)(0)+ C\eps^{5}\sup\limits_{0\leq s\leq t}e^{2\la s}\left\|\nu^{-1}w_lg (R^{\ell })(s)\right\|^2_\infty
\\
&+C\int_{0}^te^{2\la s}\left\|\nu^{-1/2}\{\FI-\FP\}g (R^{\ell })(s)\right\|_2^2ds
+C\eps^{-2}\int_{0}^{t}e^{2\lambda  s}\left\| \nu^{-1/2}\FP g (R^{\ell })(s)\right\|
_{2}^{2}ds.
\end{split}
\end{equation}
On the another hand, thanks to Lemmas \ref{es.nop} and \ref{gadec} as well as Proposition \ref{ss}, it follows for $\la_0>\la>0$ and $l>3/2$
\begin{equation}\label{es.nopl1}
\begin{split}
\int_{0}^t&e^{2\lambda s}\left\|\nu^{-1/2}\{\FI-\FP\}g (R^{\ell })(s)\right\|_2^2ds\\
\leq& C\sup\limits_{0\leq s\leq t}\|w_l\nu f_1(s)\|^2_{\infty}\int_{0}^te^{2\lambda s}\|R^\ell(s)\|^2_{\nu}ds
+C\sum\limits_{i=2}^6\eps^{2(i-1)}\|w_l \nu f_i(s)\|^2_{\infty}\int_{0}^te^{2\lambda s}\|R^\ell(s)\|^2_{\nu}ds\\
&+C\eps^{8-2\be}\sup\limits_{0\leq s\leq t}\|w_lR^\ell\|^2_{\infty}\int_{0}^te^{2\lambda s}\|R^\ell(s)\|^2_{\nu}ds
+\eps^{2+2\be}\int_{0}^te^{2\lambda s}\left\|\{\FI-\FP\}\left\{\pa_t f_5+v\cdot \na_xf_6\right\}\right\|_2^2ds
\\&+\eps^{4+2\be}\int_{0}^te^{2\lambda s}\|\pa_t f_6\|_2^2ds\\
\leq& C\left\{\sup\limits_{0\leq s\leq t}\CE(R^\ell)(s)+\vps_0^2+\eps^2P^2\left(
\|[u_{1,0},\ta_{1,0}]\|_{H^{14}}+\|[u_{2,0},\ta_{2,0}]\|_{H^{12}}\right)\right\}\int_{0}^t\mathcal {D}(R^\ell)(s)ds
\\&+\eps^{2+2\be}\int_{0}^te^{2\lambda s}e^{-2\la_0 s}P^2\left(
\|[u_{1,0},\ta_{1,0}]\|_{H^{14}}+\|[u_{2,0},\ta_{2,0}]\|_{H^{12}}\right)ds,
\end{split}
\end{equation}
\begin{equation}\label{es.nopl2}
\begin{split}
\eps^{-2}&\int_{0}^{t}e^{2\lambda s}\Vert \nu^{-1/2}\FP g (R^{\ell })(s)\Vert
_{2}^{2}ds\\ \leq& C\eps^{2\be}\int_{0}^te^{2\lambda s}\left\|\FP\left\{\pa_t f_5+v\cdot\na_x f_6\right\}\right\|_2^2ds\\
\leq& C\eps^{2\be}\int_{0}^te^{2\lambda s}e^{-2\la_0s}P^2\left(
\|[u_{1,0},\ta_{1,0}]\|_{H^{14}}+\|[u_{2,0},\ta_{2,0}]\|_{H^{12}}\right)ds\\
\leq& C\eps^{2\be}P^2\left(
\|[u_{1,0},\ta_{1,0}]\|_{H^{14}}+\|[u_{2,0},\ta_{2,0}]\|_{H^{12}}\right),
\end{split}
\end{equation}
and
\begin{equation}\label{es.nopl3}
\begin{split}
\sup\limits_{0\leq s\leq t}&e^{2\lambda s}\left\|\nu^{-1}w_lg (R^{\ell })(s)\right\|^2_{\infty}
\\
\leq& C\sup\limits_{0\leq s\leq t}\|w_l f_1(s)\|^2_{\infty}\sup\limits_{0\leq s\leq t}e^{2\lambda s}\|w_lR^\ell(s)\|_\infty^2
\\&+C\sum\limits_{i=2}^6\eps^{2(i-1)}\sup\limits_{0\leq s\leq t}\|w_l f_i(s)\|^2_{\infty}\sup\limits_{0\leq s\leq t}e^{2\lambda s}\|w_lR^\ell(s)\|_\infty^2\\
&+C\eps^{8-2\be}\sup\limits_{0\leq s\leq t}\|w_lR^\ell\|^2_{\infty}\sup\limits_{0\leq s\leq t}e^{2\lambda s}\|w_lR^\ell(s)\|_\infty^2
\\&+\eps^{2+2\be}\sup\limits_{0\leq s\leq t}e^{2\lambda s}\left\|w_l\left\{\pa_t f_5+v\cdot \na_xf_6\right\}\right\|_\infty^2
+\eps^{4+2\be}\sup\limits_{0\leq s\leq t}e^{2\lambda s}\|w_l\pa_t f_6\|_\infty^2ds\\
\leq& C\left\{\sup\limits_{0\leq s\leq t}\CE(R^\ell)(s)+\vps_0^2+\eps^2P^2\left(
\|[u_{1,0},\ta_{1,0}]\|_{H^{16}}+\|[u_{2,0},\ta_{2,0}]\|_{H^{14}}\right)\right\}\sup\limits_{0\leq s\leq t}\mathcal {E}(R^\ell)(s)
\\&+\eps^{2+2\be}\sup\limits_{0\leq s\leq t}\left\{e^{-2\lambda_0 s}e^{2\la s}P^2\left(
\|[u_{1,0},\ta_{1,0}]\|_{H^{16}}+\|[u_{2,0},\ta_{2,0}]\|_{H^{14}}\right)\right\}.
\end{split}
\end{equation}
To this end, one has from \eqref{pro24}, \eqref{es.nopl1}, \eqref{es.nopl2} and \eqref{es.nopl3} that
\begin{equation}\label{X1}
\begin{split}
\FX_\de(R^{\ell+1})(t)\leq& C\CE(R_0)(0)+\eps^{2\be}\left\{P^2\left(
\|[u_{1,0},\ta_{1,0}]\|^2_{H^{16}}+\|[u_{2,0},\ta_{2,0}]\|^2_{H^{14}}\right)\right\}
\\&+C\left\{\vps_0^2+\eps^2P^2\left(
\|[u_{1,0},\ta_{1,0}]\|^2_{H^{16}}+\|[u_{2,0},\ta_{2,0}]\|^2_{H^{14}}\right)\right\}\sup\limits_{0\leq s\leq t}\FX_\de(R^\ell)(s)\\&+C\FX_\de^2(R^{\ell})(t),
\end{split}
\end{equation}
which further implies $\FX_\de(R^{\ell+1})(t)<\de$ if $R^{\ell}\in\FX_\de$ with $\de, \vps_0, \eps$ and $\CE(R_0)$ being small enough.

In what follows we prove the strong convergence of the iteration sequence $\{R^{\ell}\}_{\ell=0}^{\infty}$ constructed above. To do this,
by taking difference of the equations that $R^{\ell +1}$ and $R^{\ell }$ satisfy, we deduce that%
\begin{eqnarray*}
\begin{array}{rll}
\begin{split}
\eps\partial _{t}[R^{\ell +1}-R^{\ell }]+v\cdot \nabla _{x}[R^{\ell
+1}-R^{\ell }]+\frac{1}{\eps}L[R^{\ell +1}-R^{\ell }] =g(R^\ell)-g(R^{\ell-1}),
\end{split}
\end{array}
\end{eqnarray*}%
with $R^{\ell +1}-R^{\ell }=0$ initially. By the same fashion as for obtaining \eqref{X1}, one
obtains
\begin{equation*}\label{Xfmin}
\begin{split}
\FX_\de(R^{\ell+1}-R^{\ell})(t)\leq& C\left\{\vps_0^2+\eps^2P\left(
\|[u_{1,0},\ta_{1,0}]\|_{H^{16}}+\|[u_{2,0},\ta_{2,0}]\|_{H^{14}}\right)\right\}\FX_\de(R^\ell-R^{\ell-1})(t)
\\&+
C\left\{\FX_\de(R^\ell)+\FX_\de(R^{\ell-1})\right\}\FX_\de(R^{\ell}-R^{\ell-1})(t).
\end{split}
\end{equation*}
Thus $\{R^{\ell}\}_{\ell=0}^{\infty}$ is a Cauchy sequence in $\FX_\de$ for $\de$ suitably small. Moreover, take $R$ as the limit of the sequence $\{R^{\ell}\}_{\ell=0}^{\infty}$ in $\FX_\de$, then $R$ satisfies
\begin{equation*}\label{sol.es}
\begin{split}
\sup\limits_{0\leq s\leq t}\CE(R)(s)+\int_0^t\mathcal {D}(R)(s)ds
\leq& C\CE(R)(0)+C\eps^{2\be}\left\{P^2\left(
\|[u_{1,0},\ta_{1,0}]\|_{H^{16}}+\|[u_{2,0},\ta_{2,0}]\|_{H^{14}}\right)\right\}.
\end{split}
\end{equation*}
The proof for the uniqueness of the solution obtained above is standard, and
the proof of the positivity of $\mu+\eps\sqrt{\mu}\left\{\sum\limits_{i}^6\eps^{i-1}f_i+\eps^{4-\beta}R\right\}$
is the same as that of Section 3.8 in \cite[pp.66]{EGKM-15} and thus will be omitted. This ends the proof of Theorem \ref{mre}.

\end{proof}

\medskip

\noindent {\bf Acknowledgements:} YG was supported in part by NSFC grant 10828103, DMS 1611695 and Simon Research
Fellowship.
SQL was
supported by grants from the National Natural Science Foundation of China (contracts: 11471142, 11271160  and 11571063). SQL would like to thank Professor Guilong Gui for the helpful discussions on subject of the paper.


\end{document}